\newtheorem{theorem}{Theorem}[section]
\newtheorem*{theorem*}{Theorem}
\newtheorem{lemma}[theorem]{Lemma}
\newtheorem{proposition}[theorem]{Proposition}
\newtheorem{corollary}[theorem]{Corollary}
\newtheorem*{corollary*}{Corollary}
\newtheorem{conjecture}[theorem]{Conjecture}
\theoremstyle{definition}
\newtheorem{definition}[theorem]{Definition}
\newtheorem*{definition*}{Definition}
\newtheorem{example}[theorem]{Example}
\newcommand{\F}{\mathbf{F}}
\newcommand{\Fq}{\F_q}
\newcommand{\K}{Krawtchouk }
\newcommand{\defeq}{\overset{\mathrm{def}}{=}}
\newcommand{\GL}{\mathrm{GL}}
\newcommand{\NN}{\mathbb{N}}
\newcommand{\rk}{\operatorname{rank}}
\newcommand{\mat}{\mathfrak{m}}
\newcommand{\nHit}{H}
\newcommand{\Hit}{H}
\newcommand{\NE}{\mathrm{NE}}
\newcommand{\GR}{\mathrm{GR}}
\newcommand{\Sym}{\mathfrak{S}}
\theoremstyle{remark}
\newtheorem{remark}[theorem]{Remark}
\newtheorem{question}[theorem]{Question}
\numberwithin{equation}{section}
\newcommand\qbin[3]{\left[\begin{matrix} #1 \\ #2 \end{matrix} \right]_{#3}}
\DeclareMathOperator{\stat}{stat}
\DeclareMathOperator{\inv}{inv}
\begin{document}
\title{Rook theory of the finite general linear group}
\author{Joel Brewster Lewis}
\address{Department of Mathematics \\
         George Washington University \\
         Washington, DC 20052} 
         \email{jblewis@gwu.edu} 
\author{Alejandro H. Morales}
\address{Department of Mathematics and Statistics\\
University of Massachusetts, Amherst\\
 MA 01003} 
 \email{ahmorales@math.umass.edu}

\begin{abstract}
Matrices over a finite field having fixed rank and restricted support
are a natural $q$-analogue of rook placements on a board.  We develop
this $q$-rook theory by defining a corresponding analogue of the hit
numbers.  Using tools from coding theory, we show that
these $q$-hit and $q$-rook numbers obey a variety of identities
analogous to the classical case.  We also explore connections to
earlier $q$-analogues of rook theory, as well as settling a
polynomiality conjecture and finding a counterexample of a positivity
conjecture of the authors and Klein.
\end{abstract}

\maketitle

\section{Introduction}

Classically, part of rook theory goes like this
\cite{RiordanKaplansky}: given a \emph{board} $B$ contained in the
discrete $n \times n$ square grid $[n] \times [n]$, one wishes to find
the \emph{rook number} $r_i(B)$, the number of ways of placing $i$
non-attacking rooks in $B$, or the {\em hit number} $h_i(B)$, the
number of $n \times n$ permutation matrices with $i$ rooks in $B$.  These numbers are difficult to compute in general \cite{Valiant}, but nevertheless one can say many things about their properties.   For any board $B$, the rook and hit numbers are related by the equation
\begin{equation} 
\label{eq:classic_rookhit_rel}
\sum_{i=0}^n h_i(B) \cdot t^i = \sum_{i=0}^n r_i(B) \cdot  (n-i)! \cdot (t-1)^i.
\end{equation}
Moreover, from their definition the hit numbers satisfy the reciprocity relation
\begin{equation}
\label{eq:classic-hit-recip}
h_{n-i}(\overline{B})  = h_i(B),
\end{equation}
where $\overline{B}$ denotes the complement of $B$ with respect to $[n] \times [n]$.
The zero hit number $h_0(B) = h_n(\overline{B}) = r_n(\overline{B})$ 
%by \eqref{eq:classic-hit-recip} and the definition of rook numbers
is of particular interest; setting $t=0$ in \eqref{eq:classic_rookhit_rel} gives the 
inclusion-exclusion formula
\[
h_0(B) =  \sum_{i=0}^n (-1)^i \cdot (n-i)! \cdot r_i(B). 
\]
For example, this relation can be used to find formulas for the number $d_n$ of 
permutations of size $n$ with no fixed points (derangements) and the number $c_n$ of
permutations $w$ of size $n$ such that $w(i) \not\equiv i,i+1 \pmod{n}$ (the
famous {\em probl\`eme des m\'enages}; see \cite[\S 2.3]{EC1}). The
boards in these cases are the diagonal $\{(1,1),\ldots,(n,n)\}$
and a board consisting of the
diagonal, the next upper diagonal and the cell $(n,1)$ (Figure~\ref{fig:meange4by4board}).

Garsia and Remmel \cite{GarsiaRemmel} started the study of
$q$-analogues of rook numbers by defining {\em $q$-rook numbers} and
{\em $q$-hit numbers} for Ferrers boards.  By definition, these $q$-analogues are polynomials in a formal variable $q$ having nonnegative integer coefficients, whose values at $q = 1$ are equal to the corresponding rook numbers and hit numbers.  
A different kind of $q$-analogue of rook numbers was
proposed in \cite{LLMPSZ}, namely, the number of $n\times n$ 
matrices with entries in the finite field $\Fq$ with $q$ elements 
having rank $i$ and support in $B$. This number, denoted by $\mat_i(B,q)$, is an
enumerative $q$-analogue of $r_i(B)$
in a sense made precise in \eqref{eq:Mq-analogue} below%
%\cite[Prop.~5.1]{LLMPSZ}
. 
When $B$ is a Ferrers board, Haglund \cite{Haglund} had already
shown that $\mat_i(B,q)$  is equivalent to the Garsia--Remmel
$q$-rook numbers. However, for general boards, the function $\mat_i(B,q)$
need not be a polynomial in $q$ \cite{Stembridge} (indeed it can be much more complicated \cite{KaplanLM}), and if it is a polynomial it might or not have nonnegative integer coefficients.

In the first part of this paper, we continue the study of this new
$q$-rook theory. We define a corresponding notion of $q$-hit numbers for an arbitrary board $B$ using a suggestion of Remmel (private communication), 
%that is used in the Garsia--Remmel $q$-rook theory \cite{GarsiaRemmel} 
and we give a reciprocity
relation for $\mat_i(B,q)$ and $q$-hit numbers using a result of Delsarte \cite{Delsarte} that is an analogue of the MacWilliams identity \cite{MacWilliams} on the weights of dual codes. (The connection between this identity and $\mat_i(B,q)$ has appeared in work of Ravagnani \cite[Rem.~50]{Ravagnani}.)

Since $\mat_i(B,q)$ is always divisible (as an integer) by $(q - 1)^i$, it is convenient to define the \emph{reduced} (or \emph{projective}) \emph{matrix count} $M_i(B,q)= \mat_i(B,q)/(q-1)^i$.  Then the $q$-hit numbers for an arbitrary board are defined as follows.
\begin{definition*} 
Given a board $B\subseteq[n]\times [n]$ and a nonnegative integer $i$, define the \emph{$q$-hit number} $\nHit_i(B,q)$ by the equation
\begin{equation} \label{eq:intro-hit}
\sum_{i=0}^n \nHit_i(B,q) \cdot t^i = q^{\binom{n}{2}} \sum_{i=0}^n M_i(B,q)\cdot [n-i]!_q \prod_{k=0}^{i-1} (tq^{-k}-1),
\end{equation}
where $[n - i]!_q$ is a $q$-factorial.  
Let ${P}(B,q,t)$ denote the expression on both sides of this equality.
\end{definition*}
Some properties of the hit numbers are immediate formal consequences of this definition.  By taking leading coefficients we have that $\nHit_n(B, q)$ is equal to $M_n(B, q)$, while by setting $t = 1$ we have that the $q$-hit numbers partition $|\GL_n(\F_q)|$, in the sense that
\[
(q-1)^n\sum_{i=0}^n \nHit_i(B,q) = |\GL_n(\F_q)|.
\]

Other properties are less obvious.  We show that the functions $\nHit_i(B,q)$ are enumerative $q$-analogues of the
hit numbers (Proposition~\ref{prop:qhitqanalogue}), that they
coincide with the Garsia--Remmel $q$-hit numbers when $B$ is a Ferrers
board (Proposition~\ref{prop:q-hit and gr}), and that their generating
function ${P}(B, q, t)$ has a probabilistic interpretation
(Theorem~\ref{thm:probinterpret}).  Furthermore, using a generalized MacWilliams complement identity for $M_i(B,q)$, we show in Section~\ref{sec:reciprocity} the following reciprocity of $q$-hit numbers.
\begin{theorem*}
For every board $B\subseteq [n]\times [n]$ and for $i=0,\ldots,n$ we
have that 
\[
\nHit_{n-i}(\overline{B},q) = q^{in-|B|} \cdot \nHit_i(B,q).
\]
\end{theorem*}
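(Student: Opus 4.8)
The plan is to repackage the asserted family of coefficient identities as a single functional equation for the generating polynomial ${P}(B,q,t)$, and then to obtain that functional equation from the MacWilliams-type complement identity for the reduced counts $M_i$. Since $\nHit_i(B,q)$ is the coefficient of $t^i$ in ${P}(B,q,t)$, comparing the coefficient of $t^{n-i}$ on the two sides shows that the relations $\nHit_{n-i}(\overline B,q)=q^{in-|B|}\nHit_i(B,q)$ for $i=0,\dots,n$ are equivalent to the single identity
\[
{P}(\overline B,q,t)=q^{-|B|}\,t^{n}\,{P}(B,q,q^{n}/t).
\]
Indeed $t^{n}{P}(B,q,q^{n}/t)=\sum_{i}\nHit_i(B,q)\,q^{in}\,t^{n-i}$, so the coefficient of $t^{n-i}$ in the right-hand side is $q^{in-|B|}\nHit_i(B,q)$, matching $\nHit_{n-i}(\overline B,q)$ term by term. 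It therefore suffices to prove this functional equation.

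The mechanism behind it is that complementation of boards is a coding-theoretic duality. Let $V_B\subseteq\Fq^{n\times n}$ be the space of matrices supported on $B$, so $\dim V_B=|B|$; under the pairing $\langle A,C\rangle=\sum_{(a,b)}A_{ab}C_{ab}$ its orthogonal complement is exactly $V_B^{\perp}=V_{\overline B}$. The rank distribution of $V_B$ records the counts $\mat_i(B,q)$, so Delsarte's rank-metric MacWilliams identity, applied to $V_B$ and its dual $V_{\overline B}$, expresses the reduced counts of $\overline B$ as a $q$-Krawtchouk transform of those of $B$: there are explicit $q$-binomial coefficients $K_i(j)$ with
\[
M_i(\overline B,q)=q^{-|B|}\sum_{j=0}^{n}K_i(j)\,M_j(B,q),
\]
the normalization $q^{-|B|}=1/|V_B|$ producing the prefactor in the functional equation.

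I would substitute this transform into the defining sum
\[
{P}(\overline B,q,t)=q^{\binom n2}\sum_{i=0}^{n}M_i(\overline B,q)\,[n-i]!_q\prod_{k=0}^{i-1}(tq^{-k}-1),
\]
interchange the two summations, and collapse the inner sum over $i$ for each fixed $j$. The shape of the answer is governed by the factorization
\[
\prod_{k=0}^{i-1}\bigl((q^{n}/t)q^{-k}-1\bigr)=(-1)^{i}t^{-i}\prod_{k=0}^{i-1}(t-q^{n-k}),
\]
which is how the substitution $t\mapsto q^{n}/t$ regenerates products of the same type inside ${P}(B,q,q^{n}/t)$. Concretely, the functional equation reduces to the single $q$-Krawtchouk summation
\[
\sum_{i=0}^{n}K_i(j)\,[n-i]!_q\prod_{k=0}^{i-1}(tq^{-k}-1)=t^{n}\,[n-j]!_q\prod_{k=0}^{j-1}\bigl((q^{n}/t)q^{-k}-1\bigr)
\]
for each $j$; summing this against $q^{-|B|}q^{\binom n2}M_j(B,q)$ over $j$ yields precisely $q^{-|B|}t^{n}{P}(B,q,q^{n}/t)$.

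The main obstacle is this inner collapse. One must show that the Krawtchouk coefficients $K_i(j)$, weighted by the $q$-factorials $[n-i]!_q$ and the products $\prod_{k=0}^{i-1}(tq^{-k}-1)$ and summed over $i$, telescope into the single complementary factor indexed by $j$. This is an instance of $q$-Krawtchouk orthogonality, provable from the $q$-Vandermonde (or $q$-binomial) theorem; the delicate part is the exponent bookkeeping—reconciling the $q^{\binom n2}$, the $q^{in}$-type factors arising from $t\mapsto q^{n}/t$ and from the $K_i(j)$, and the relation $|B|+|\overline B|=n^{2}$—so that the overall prefactor comes out to exactly $q^{-|B|}t^{n}$. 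Once this single summation identity is established, the functional equation and hence the theorem follow at once.
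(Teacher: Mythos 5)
Your proposal follows essentially the same route as the paper: reduce the family of coefficient identities to the single functional equation ${P}(\overline B,q,t)=q^{-|B|}\,t^{n}\,{P}(B,q,q^{n}/t)$, apply Delsarte's complement identity to the defining sum for ${P}(\overline B,q,t)$, interchange the two summations, and collapse the inner sum over the $q$-\K coefficients into the complementary factor indexed by $j$. The inner summation identity you isolate is precisely the paper's technical lemma (the identity $\sum_{r}K_r(i)\,(q^{n-m+1};q)_{m-r}\,(t;q^{-1})_r=t^m(q^{n-m+1};q)_{m-i}(t^{-1}q^n;q^{-1})_i$, modulo the $(q-1)$-normalization between $\mat_i$ and $M_i$ that you gloss over), which the paper establishes by two applications of $q$-Chu--Vandermonde; you correctly name that tool but leave the verification --- where essentially all of the remaining work and exponent bookkeeping lives --- as an assertion.
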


\noindent
We leave open the problem of giving a combinatorial interpretation to
$\nHit_{i}(B,q)$ (Question~\ref{open question:interpretation}). 

As in the classical case, the zeroth $q$-hit number is particularly nice.  
By the $q$-hit reciprocity, one can show that $\nHit_0(B,q)  = q^{|B|}M_n(\overline{B},q)$.
Moreover, there is an inclusion-exclusion formula for this number (Corollary~\ref{cor:H_0 as inclusion-exclusion}). 

\begin{corollary*}
For any board $B \subset [n] \times [n]$ we have
\[
\nHit_0(B,q)  = q^{|B|} M_n(\overline{B},q) = q^{\binom{n}{2}} \sum_{i=0}^n
(-1)^i \cdot [n-i]!_q \cdot M_i(B,q).
\]
\end{corollary*}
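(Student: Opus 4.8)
The plan is to obtain both equalities by specializing results already established in the excerpt, with essentially no new computation. I would present the second equality first, since it is the more direct of the two.

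For the equality $\nHit_0(B,q) = q^{\binom{n}{2}} \sum_{i=0}^n (-1)^i [n-i]!_q M_i(B,q)$, I would simply set $t = 0$ in the defining relation \eqref{eq:intro-hit}. On the left-hand side every term $\nHit_i(B,q) \cdot t^i$ with $i \geq 1$ vanishes, leaving only $\nHit_0(B,q)$. On the right-hand side each factor $tq^{-k} - 1$ becomes $-1$, so the product $\prod_{k=0}^{i-1}(tq^{-k}-1)$ collapses to $(-1)^i$, and the right-hand side reduces to $q^{\binom{n}{2}} \sum_{i=0}^n (-1)^i [n-i]!_q M_i(B,q)$, as desired. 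The only thing to check carefully is the sign bookkeeping in the empty-product convention for $i = 0$, which contributes the leading term $q^{\binom{n}{2}} [n]!_q M_0(B,q)$.

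For the equality $\nHit_0(B,q) = q^{|B|} M_n(\overline{B},q)$, I would invoke the $q$-hit reciprocity theorem with $i = 0$, which gives $\nHit_n(\overline{B},q) = q^{-|B|}\,\nHit_0(B,q)$, and hence $\nHit_0(B,q) = q^{|B|}\,\nHit_n(\overline{B},q)$. I would then substitute the leading-coefficient identity $\nHit_n(\overline{B},q) = M_n(\overline{B},q)$, recorded immediately after the definition of the $q$-hit numbers and obtained by extracting the coefficient of $t^n$ in \eqref{eq:intro-hit} (where the $i = n$ summand contributes $q^{\binom{n}{2}} M_n \prod_{k=0}^{n-1} q^{-k} = M_n$ after the prefactor cancels against $\prod_{k=0}^{n-1} q^{-k} = q^{-\binom{n}{2}}$). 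Combining the two displays yields $\nHit_0(B,q) = q^{|B|} M_n(\overline{B},q)$.

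I do not expect a genuine obstacle here: all of the substance lives in the reciprocity theorem of Section~\ref{sec:reciprocity}, which I am permitted to assume, and in the leading-coefficient observation, both of which reduce this statement to two substitutions. The only points requiring care are the exponent $in - |B|$ evaluated at $i = 0$, and confirming that the $t \to 0$ and coefficient-of-$t^n$ specializations of $P(B,q,t)$ are consistent; these are routine once the conventions above are fixed.
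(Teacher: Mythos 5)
Your proof is correct. For the first equality ($\nHit_0(B,q) = q^{|B|} M_n(\overline{B},q)$) you follow exactly the paper's route: reciprocity \eqref{eq:hitreciprocity1} at $i=0$ combined with the leading-coefficient identity $\nHit_n(\overline{B},q) = M_n(\overline{B},q)$ (which the paper attributes to Proposition~\ref{prop:hitasMs} but which you correctly re-derive by extracting the $t^n$ coefficient). For the second equality your route genuinely differs: you set $t=0$ in the defining relation \eqref{eq:intro-hit}, which collapses each product $\prod_{k=0}^{i-1}(tq^{-k}-1)$ to $(-1)^i$ and yields the inclusion-exclusion formula immediately — this is the direct analogue of the classical argument the paper sketches in its introduction. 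The paper instead chains the two formulas together, evaluating $M_n(\overline{B},q)$ via the full-rank complement identity \eqref{eq:fullrankrect}. Your version is more elementary at this step, needing only the formal definition rather than a second application of Delsarte's identity; the paper's version has the mild virtue of exhibiting the inclusion-exclusion sum as literally the complement matrix count, but since the reciprocity theorem already rests on Lemma~\ref{polynomial lemma} and Theorem~\ref{thm:MW}, nothing is lost either way. Both arguments are sound and your bookkeeping (the empty product at $i=0$, the exponent $in-|B|$ at $i=0$, and the cancellation $q^{\binom{n}{2}} \cdot q^{-\binom{n}{2}}$) checks out.
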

This formula is used to recover the formula
in \cite{LLMPSZ} for the number $D_n(q)$ of $n\times n$ invertible matrices
with entries in $\Fq$ with zero diagonal
(a $q$-analogue of derangements); see \cite[Cor.~52]{Ravagnani}. We use
it to find a $q$-analogue of the {\em m\'enage problem} (Theorem~\ref{thm:qmenage}), 
settling a question considered by Rota and Haglund (private communication from Haglund).
Our $q$-analogue is very similar to Touchard's classical formula
\eqref{eq:menage} for $c_n$.

The starting point of any nice result in rook theory
is the case of Ferrers boards \cite{GJW, GarsiaRemmel, Haglund}.
In \cite{KleinLM} and \cite{LM}, we studied the matrix counts 
$M_i(B,q)$ for a richer class of boards, 
the \emph{(coinversion) diagrams} of permutations (see Section~\ref{sec:notation} for the definition). 
Our study included the following conjecture.

\begin{conjecture}[{\cite[Conj.~5.1]{KleinLM}}]
\label{rothe conjecture}
For all permutations $w\in \Sym_n$ and ranks $0\leq r \leq n$, the reduced
matrix count $M_r(\overline{I_w},q)$
of $n\times n$ matrices over $\Fq$ of rank $r$ with support in
the complement of the diagram $I_w$ of $w$
is a polynomial in $q$ with nonnegative integer coefficients. 
\end{conjecture}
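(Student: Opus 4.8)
The plan is to induct on $n$, using two facts. First, the reduced matrix counts $M_i(B,q)$ depend only on $B$ up to independent permutations of its rows and columns: multiplying a matrix by permutation matrices on the left and right preserves rank and carries support in $B$ to support in the permuted board. Second, Rothe‑diagram complements are recursively structured. Writing $I_w=\{(i,j): j<w(i),\ i<w^{-1}(j)\}$ (any convention for the diagram of $w$ differs from this by a transpose or reversal, which the permutation‑invariance absorbs), one checks that row $n$ of $I_w$ is empty and that deleting this row together with column $w(n)$ turns $I_w$ into $I_{w'}$, where $w'\in\Sym_{n-1}$ is the standardization of $w$ with the value $w(n)$ removed. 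Since deletion of a sub‑rectangle commutes with complementation, the $n\times n$ board $\overline{I_w}$ has a full last row, and erasing row $n$ and column $w(n)$ leaves exactly $\overline{I_{w'}}$. This is the structural input that drives the induction.

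First I would peel off the full last row. A rank‑$r$ matrix supported on $\overline{I_w}$ is a matrix $M'$ of some rank $s$ supported on the $(n-1)\times n$ board $B':=\overline{I_w}\setminus(\text{row }n)$ together with an arbitrary last row $v\in\Fq^{n}$; the rank stays $s$ for the $q^{s}$ vectors $v$ in the row space of $M'$ and rises to $s+1$ for the other $q^{n}-q^{s}$. In reduced form this gives the clean recursion
\[
M_r(\overline{I_w},q)=q^{r}\,M_r(B',q)+q^{r-1}[n-r+1]_q\,M_{r-1}(B',q),
\]
whose coefficients are manifestly polynomials in $q$ with nonnegative integer coefficients. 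Thus it suffices to treat the $(n-1)\times n$ board $B'$, which is $\overline{I_{w'}}$ with one extra column $\kappa$ reinserted in position $w(n)$, whose support is $S=\{i\le n-1: w(i)<w(n)\}$.

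Next I would peel off the column $\kappa$, and this is the step I expect to be the main obstacle. Writing a matrix supported on $B'$ as a matrix $M''$ supported on $\overline{I_{w'}}$ with column space $C\subseteq\Fq^{n-1}$ together with an extra column $c\in\Fq^{S}$, the rank equals $\rk M''$ for the $|C\cap\Fq^{S}|=q^{\dim(C\cap\Fq^{S})}$ choices of $c$ inside $C$ and $\rk M''+1$ for the rest. The number of extensions is therefore governed by $\dim(C\cap\Fq^{S})$, which is \emph{not} determined by $\dim C$ alone. For Ferrers boards the column supports are nested, forcing every column space to be a coordinate subspace and making these intersection dimensions constant across matrices of a given rank; but for $\overline{I_w}$ the column supports $w^{-1}([j])\cup[w^{-1}(j),n]$ are genuinely non‑nested, so $\dim(C\cap\Fq^{S})$ really varies. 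To close the induction I would have to refine the hypothesis, tracking a generating function for $\overline{I_{w'}}$ that records the dimensions of the intersections of the column space with the coordinate subspaces $\Fq^{S'}$ arising from $w$, and show that column‑peeling acts on this refined count with nonnegative structure constants.

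The polynomiality half of the statement should survive: the refined counts are finite sums of products of $q$-binomial coefficients indexed by the possible intersection patterns, and both peeling steps preserve polynomiality. The nonnegativity half is the genuinely delicate point and is not delivered by the recursion itself. To secure it I would seek a combinatorial model expressing $M_r(\overline{I_w},q)$ as a positive sum $\sum_P q^{\operatorname{stat}(P)}$ over size‑$r$ non‑attacking rook placements $P\subseteq\overline{I_w}$ (the pivots of a canonical echelon form adapted to a fixed column order), with $\operatorname{stat}(P)$ counting free cells weakly to one side of each pivot. Whether such a statistic exists — equivalently, whether each placement contributes a single monomial $(q-1)^{r}q^{\operatorname{stat}(P)}$ rather than a cancelling combination — is exactly what the non‑nested support structure puts in doubt, so I would regard the positivity step, rather than polynomiality, as the crux on which the conjecture turns.
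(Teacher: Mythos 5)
The statement you set out to prove is false, and the paper's purpose in recording it is precisely to refute half of it: the positivity assertion fails. The paper exhibits explicit counterexamples --- for $w = 789563412 \in \Sym_9$ one has $M_1(\overline{I_w},q) = 24q^{11} - 4q^{10} + 10q^9 + \cdots$, and for $w = 6\,8\,9\,10\,4\,5\,7\,1\,2\,3 \in \Sym_{10}$ the full-rank count $M_{10}(\overline{I_w},q)$ ends in $\cdots + 2q^{48} - 8q^{47} - q^{46} + q^{45}$. Consequently no refinement of your induction can have nonnegative structure constants in general, and the combinatorial model $\sum_P q^{\stat(P)}$ over rook placements that you hope for cannot exist for these $w$. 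To your credit, you isolated exactly the right step --- the column-peeling, where $\dim(C \cap \F_q^{S})$ is not a function of the rank alone --- as the place where positivity is in doubt; it is in fact where it dies.

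What is true, and what the paper proves, is the polynomiality half, $M_r(\overline{I_w},q) \in \ZZ[q]$, and there your route also has a gap. You assert that the refined counts (matrices on $\overline{I_{w'}}$ of given rank with prescribed intersection dimensions of the column space with various coordinate subspaces) are ``finite sums of products of $q$-binomial coefficients'' and hence polynomial in $q$; nothing in your argument supports this, and it is exactly the kind of claim that fails for general boards --- Stembridge's Fano-plane board yields a matrix count that is not a polynomial in $q$ at all, so polynomiality must come from specific structure of $I_w$, not from generic linear-algebra bookkeeping. The paper's route is entirely different: $I_w$ itself has the NE property, so $M_i(I_w,q)$ is (up to a power of $q$) a Garsia--Remmel $q$-rook number and in particular lies in $\NN[q]$ (Corollary~\ref{cor:polycoinv}); Delsarte's MacWilliams-type complement identity (Theorem~\ref{thm:MW}) then expresses $\mat_r(\overline{I_w},q)$ as a $\ZZ[q]$-linear combination of the $\mat_i(I_w,q)$, which gives polynomiality with possibly negative coefficients at a stroke (Corollary~\ref{cor:polynomiality-rothe}). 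Your row-peeling identity $M_r(\overline{I_w},q)=q^{r}M_r(B',q)+q^{r-1}[n-r+1]_q M_{r-1}(B',q)$ is correct as far as it goes, but the working substitute for your problematic column-peeling step is the paper's deletion-contraction relation (Corollary~\ref{cor:delconw}), obtained by pushing the Garsia--Remmel recursion through the complement identity rather than by stratifying column spaces.
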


We verified the conjecture computationally for $r\leq n\leq 7$ and for $r=n$ for $n \leq 9$ \cite{LMsite}. 
In \cite{LM}, we proved the conjecture for permutations $w$
 avoiding the patterns $4231$, $35142$, $42513$ and $351624$ in the case $r = n$. In the second part of this paper, we use the complement identity to prove
 the polynomiality part of Conjecture~\ref{rothe conjecture} (Corollary~\ref{cor:polynomiality-rothe}).

\begin{theorem*}
For all permutations $w\in \Sym_n$ and all ranks $0\leq r\leq n$,
$M_r(\overline{I_w},q)$ is a polynomial in $q$ with integer coefficients. 
\end{theorem*}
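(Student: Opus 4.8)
The plan is to use the generalized MacWilliams complement identity as the engine and to reduce the statement about the complementary board $\overline{I_w}$ to a polynomiality statement about the \emph{diagram} $I_w$ itself. The complement identity (the same tool that yields the reciprocity Theorem and its $r=n$ Corollary) expresses the reduced matrix count of a complementary board as an explicit finite combination
\[
M_r(\overline{B},q) = \sum_{j=0}^{n} c_{r,j}(q)\, M_j(B,q),
\]
where each coefficient $c_{r,j}(q)$ is a signed product of $q$-binomials, $q$-factorials, and a single global power of $q$ whose exponent depends only on $r$, $n$, and $|B|$; a priori $c_{r,j}(q)\in\ZZ[q,q^{-1}]$. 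First I would record this identity for general $r$ (the case $r=n$ is exactly the displayed Corollary, where $c_{n,j}(q)=(-1)^j q^{\binom n2-|B|}[n-j]!_q$), and then specialize $B=I_w$.

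Granting this identity, it suffices to establish two things: (i) $M_j(I_w,q)\in\ZZ[q]$ for every $j$, and (ii) that each specialized coefficient $c_{r,j}(q)$ in fact lies in $\ZZ[q]$, i.e. the negative powers of $q$ in the identity all cancel for this particular board. For (ii), the crucial input is that a coinversion diagram satisfies $|I_w|=\binom n2-\inv(w)$, so the global exponent $\binom n2-|I_w|=\inv(w)$ is nonnegative — exactly as in the Corollary, where the prefactor is $q^{\inv(w)}$. I would check that the analogous exponents for general $r$ remain nonnegative and that the remaining $q^{-k}$ factors (inherited from the $\prod_{k}(tq^{-k}-1)$ in the definition of $P(B,q,t)$) are absorbed, so that $c_{r,j}(q)\in\ZZ[q]$ after specialization. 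Combined with (i), this gives $M_r(\overline{I_w},q)\in\ZZ[q]$.

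For (i) — the diagram case — I would argue, or invoke from the companion analysis of permutation diagrams, that the counts $M_j(I_w,q)$ of the diagram itself are polynomials with integer coefficients; this is in sharp contrast to the complement and is the point where a genuine structural feature is needed, since for arbitrary boards these counts fail to be polynomial (Stembridge). One route I would try is to exploit the column structure of permutation diagrams: because $I_w$ is column-convex, one can build the rank generating function one column at a time, each step being a polynomial operation on the previously accumulated count, so the result is polynomial. (Heuristically, this polynomiality reflects the geometry of the associated matrix Schubert locus, whose $\Fq$-point counts are polynomial.) Since then every $M_j(I_w,q)$ and every $c_{r,j}(q)$ lies in $\ZZ[q]$, so does their sum.

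The hard part will be (i): proving polynomiality of the diagram counts $M_j(I_w,q)$ \emph{uniformly} in $w$ and $j$, precisely because the generalities break down for arbitrary boards and the argument must use a special property of permutation diagrams. A secondary, more bookkeeping obstacle is (ii), verifying that the a priori Laurent coefficients of the complement identity carry no surviving negative powers of $q$ when $B=I_w$. Finally, I would note that this method produces integer — not necessarily nonnegative — coefficients, which is consistent with, and indeed explains, the failure of the positivity half of Conjecture~\ref{rothe conjecture}.
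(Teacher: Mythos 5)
Your overall strategy is the paper's: specialize the complement identity \eqref{eq:MW} to $B=I_w$ and reduce everything to polynomiality of the diagram counts $M_j(I_w,q)$. But your step (i), which you rightly flag as the hard part, has a genuine gap. First, the structural claim it rests on is false: permutation diagrams need not be column-convex. For $w=2413$ one has $I_w=\{(1,3),(1,4),(3,3)\}$, whose third column occupies rows $1$ and $3$ but not row $2$. Second, and more importantly, the mechanism you describe --- ``build the rank generating function one column at a time, each step being a polynomial operation on the previously accumulated count'' --- is exactly the argument that fails for general boards. The number of ways to append a column supported on a prescribed set of rows so that the rank increases depends not just on the rank of the partial matrix but on how its column space meets the relevant coordinate subspace; if only the rank needed to be tracked, Stembridge's Fano board would be polynomial too. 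The property that actually saves permutation diagrams is the NE property (for all $i<i'$ and $j<j'$, if $(i,j),(i',j),(i',j')\in B$ then $(i,j')\in B$), which is what makes Gaussian elimination/Bruhat decomposition interact well with the support condition; the resulting identity $\mat_r(I_w,q)=(q-1)^r q^{|I_w|-r}R_r^{\NE}(I_w,q^{-1})$ (Haglund, and \cite{KleinLM}) is what gives $M_j(I_w,q)\in\NN[q]$. Your proof needs this (or an equivalent) input; column-convexity is neither true nor sufficient.

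A secondary correction on your step (ii): the negative powers of $q$ in \eqref{eq:MW} do not cancel coefficient-by-coefficient, and no special feature of $I_w$ is needed for them to disappear. The coefficient of $\mat_i(B,q)$ is $q^{-|B|}K_r(i)$, which is genuinely a Laurent polynomial; what happens is that the \emph{whole sum} $\sum_i K_r(i)\mat_i(B,q)$, which lies in $\ZZ[q]$ once each $\mat_i(B,q)$ does, is divisible by $q^{|B|}$ in $\ZZ[q]$ (this follows, e.g., from the fact that the left side of \eqref{eq:MW} is an integer for every prime power $q$, forcing the low-order coefficients to vanish one at a time). Likewise division by $(q-1)^r$ is handled by integrality, not by tracking a global exponent; the identity $\binom{n}{2}-|I_w|=\ell(w)\ge 0$ is a red herring here, relevant only to the cosmetic form of the $r=n$ corollary. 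With (i) replaced by the NE-property argument and (ii) replaced by this divisibility observation, your outline becomes the paper's proof.
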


We also give a deletion-contraction relation (Corollary~\ref{cor:delconw})
that allows for the quick computation of $M_r(\overline{I_w},q)$.  Using this relation, we find counterexamples to the positivity part of Conjecture~\ref{rothe conjecture}.

\begin{example}
For $w =6\,8\,9\,10\,4\,5\,7\,1\,2\,3 \in \Sym_{10}$; we have
\[
M_{10}(\overline{I_w},q) = q^{77} + 9q^{76} + 44q^{75} + \cdots + 2q^{48} - 8q^{47} - q^{46} + q^{45}\not\in \mathbb{N}[q].
\]
\end{example}

It remains open to characterize the permutations $w$ such that
$M_r(\overline{I_w},q)$ is in $\mathbb{N}[q]$.

\subsection*{Outline} Section~\ref{sec:notation} establishes notation and introduces a $q$-analogue of the MacWilliams complement identity from the literature.  Section~\ref{sec:qhit} introduces a $q$-analogue of the hit numbers and proves a variety of properties analogous to the classical case.  Section~\ref{sec:NE property} studies the $q$-rook and $q$-hit numbers for boards with a certain structural property, including connections to the Garsia--Remmel $q$-rook theory and a $q$-analogue of the \emph{probl\`eme des m\'enages}.  Section~\ref{sec:deletion-contraction} builds on Section~\ref{sec:NE property} to give deletion-contraction style recurrences for $q$-rook and $q$-hit numbers.  Finally, Section~\ref{sec:final} includes a number of additional remarks and open questions.

\subsection*{Acknowledgements}
We are grateful to Jim Haglund and Igor Pak for helpful conversations.
We thank Dennis Stanton for his valuable insights into $q$-series and
\K polynomials.  Finally, we are indebted to Jeffrey Remmel, from
whose crucial suggestions this project initially grew.  

JBL was supported in part by NSF grant DMS-1401792. 
AHM was supported in part by an AMS--Simons Foundation travel grant.

\section{Background and notation}
\label{sec:notation}

Throughout this paper, $m$ and $n$ will be fixed positive integers with $m \leq n$.
Given an integer $k$, denote by $[k]$ the set $\{1, \ldots, k\}$ of the first $k$ positive integers.  
We use the word \emph{board} to refer to any subset of $[m] \times [n]$.  Given a board $B$, we denote by $\overline{B}$ its \emph{complement} $\overline{B} \defeq ([m] \times [n]) \smallsetminus B$ in the rectangle $[m] \times [n]$.  A \emph{rook placement} on a board $B$ is a subset of $B$ that contains no two elements in the same row (i.e., having the same first coordinate) or in the same column (having the same second coordinate).  When drawing boards and rook placements, we always use matrix coordinates, so that $\{(1, y) : y \in [n]\}$ is the top-most row and $\{(x, 1) : x \in [m]\}$ is the left-most column.  The elements of a board $B$ will be variously referred to as \emph{cells} or \emph{boxes}.

One particularly nice family of boards are the \emph{Ferrers boards}.
Each Ferrers board is associated to an \emph{integer partition}
$\lambda = (\lambda_1 \geq \lambda_2 \geq \cdots \geq \lambda_k > 0)$,
and consists of an aligned collection of $\lambda_i$ boxes in the
$i$th row for $i = 1, \ldots, k$.  We take an ecumenical approach and
use the name Ferrers board for boards in both English and French
notation, as well as their reflections.

We make use of many standard notations for $q$-counting functions, including the \emph{$q$-Pochhammer symbol} and \emph{$q$-factorial}
\[
(a; q)_k \defeq \prod_{i = 0}^{k - 1} (1 - aq^i) = (aq^{k - 1}; q^{-1})_k
\qquad\textrm{ and }\qquad
[k]!_q \defeq \frac{(q; q)_k}{(1 - q)^k} = \prod_{i = 1}^k \frac{q^i - 1}{q - 1},
\]
and the \emph{$q$-binomial coefficient}, defined by
\[
\qbin{k}{\ell}{q} \defeq \frac{[k]!_q}{[\ell]!_q \cdot [k - \ell]!_q} \qquad \textrm{ if } 0 \leq \ell \leq k
\]
and $\qbin{k}{\ell}{q} \defeq 0$ otherwise.  It is not obvious from this definition, but the $q$-binomial coefficients are polynomials in $q$ with positive integer coefficients. They also give the expansion of the $q$-Pochhammer as a sum, called the \emph{$q$-binomial theorem}.  In its most general form \cite[(II.3)]{GasperRahman}, this is the infinite product-sum identity
\begin{equation}
\label{eq:general q-binomial theorem}
\frac{(az; q)_\infty}{(z; q)_\infty} = \sum_{i = 0}^\infty \frac{(a; q)_i}{(q; q)_i} z^i,
\end{equation}
but on specializing $a \mapsto q^{-k}$ and $z \mapsto q^kz$ for $k \in \NN$ it becomes
\begin{equation}
\label{eq:q-binomial theorem}
(z; q)_k = \sum_{i = 0}^k (-1)^i \qbin{k}{i}{q} q^{\binom{i}{2}} z^i.
\end{equation}
The inverse relation
\begin{equation}
\label{eq:inverse q-binomial}
z^k = \sum_{i = 0}^k (-1)^i  \qbin{k}{i}{q} q^{\binom{i}{2}} (z; q^{-1})_i
\end{equation}
expressing the pure powers of $z$ in terms of $q$-Pochhammer symbols may be proved by expanding the $q$-Pochhammer in the right side of \eqref{eq:inverse q-binomial} using \eqref{eq:q-binomial theorem}, reversing the order of summation, and re-collecting terms in the inner sum using \eqref{eq:q-binomial theorem}.

Given a board $B$, let $\mat_i(B, q)$ be the number of $m\times n$
matrices of rank $i$ over $\F_q$ with support in $B$ (that is, with
all entries outside of $B$ equal to $0$), and let $M_i(B, q) \defeq
\mat_i(B, q)/(q-1)^i$. In \cite[Prop.~5.1]{LLMPSZ}, we showed that
$M_i(B,q)$ is an enumerative $q$-analogue of $r_i(B)$, in the following sense: for any prime power $q$,
\begin{equation} \label{eq:Mq-analogue}
M_i(B, q) \equiv r_i(B) \pmod{q-1}.
\end{equation}

The \emph{symmetric group} $\Sym_n$ consists of the permutations of the set $[n]$.  These may be represented in various ways: as words $w = w_1 \cdots w_n$ in one-line notation, or as permutation matrices, having entries $1$ at positions $(i, w_i)$ for $i \in [n]$ and other entries $0$, or as placements of $n$ rooks on $[n] \times [n]$.

For a permutation $w=w_1\cdots w_n$ in $\Sym_n$, define its \emph{diagram}\footnote{There are many possible variations on the diagram $I_w$: different choices of coordinates for $w$ give different correspondences between the set of permutations and the set of their diagrams, or amount to reflecting or rotating the diagrams; recording the pairs $(i, j)$ instead of $(i, w_j)$ produces diagrams with permuted columns; recording inversions instead of coinversions is equivalent to recoordinatizing; and so on.  None of these differences materially affect our results.  Appropriate variations are known in the literature as \emph{inversion diagrams} or \emph{Rothe diagrams} of permutations.}
$I_w$ by $I_w \defeq \{(i, w_j) \mid i<j, w_i<w_j\}$. 
The cells of $I_w$ are in bijection with the \emph{coinversions} of
$w$, and so $|I_w| = \binom{n}{2} - \ell(w)$ where $\ell(w)$ is the
length (or inversion number) of $w$.  The permutation boards contain
the Ferrers boards as a sub-class: any Ferrers board that fits inside
the upper-right-aligned triangle with legs of length $n - 1$ is the
diagram of a permutation in $\Sym_n$.  Conversely, a permutation $w$
has as its diagram an upper-right-aligned Ferrers board if and only if
$w$ avoids the \emph{permutation pattern} $312$, in the sense that $w$
has no three entries $w_i > w_k > w_j$ with $i < j < k$
\cite[Ex.~2.2.2]{Manivel}.

\subsection{A MacWilliams-style complement identity}

The classical MacWilliams identity expresses the 
weight of a \emph{code} (a subspace of a finite vector space) 
in terms of the weight of the dual code \cite{MacWilliams, survey}.  
In \cite{Delsarte}, Delsarte
introduced \emph{rank-metric codes}, in which the code is a linear
subspace of matrices over a finite field and the weight of an element
is the rank.\footnote{
  Note that the set of matrices supported on a given board is a linear
  subspace, and thus a code in this sense.}
In this context, he proved what may be viewed as a $q$-analogue of the MacWilliams identity \cite[Thms.~3.3, A2]{Delsarte}.  This identity involves a $q$-analogue of the \emph{\K polynomials}, so we begin by recalling some important facts about them from the literature.

For $i, r \leq m$, define the \emph{$q$-\K polynomial}\footnote{
  Regrettably, there are several families of polynomials that go 
  by this name; see, e.g., \cite[Ex.~7.8, 7.11]{GasperRahman} 
  and \cite[\S4]{Stanton}, where the polynomials related to ours 
  are the \emph{affine $q$-\K polynomials}.}
\[
K_r(i) \defeq \sum_{s} (-1)^{r-s} q^{ns+\binom{r-s}{2}} \qbin{m-s}{r-s}{q}\qbin{m-i}{s}{q},
\]
where the sum is over all indices $s$ such that $0 \leq r - s$ and $0 \leq s \leq m - i$.
These polynomials form a family of orthogonal polynomials, and consequently have many nice properties.  We mention several of these here, following to various degrees Delsarte, Ravagnani, and Stanton \cite{Delsarte, Ravagnani, Stanton}.
Let
\begin{equation} \label{eq:defvk}
v_k \defeq \prod_{i=0}^{k-1} \frac{(q^m - q^i)(q^n-q^i)}{q^k-q^i}
\end{equation}
denote the number of $m\times n$ matrices of rank $k$ over $\F_q$
(see, e.g., \cite[\S 1.7]{Morrison}). The $q$-\K polynomials satisfy the orthogonality relation
\[
\sum_{i=0}^n  v_i \cdot K_k(i) \cdot K_{\ell}(i) = q^{mn} \cdot v_k \cdot \delta_{k,\ell},
\]
where $\delta_{k, \ell}$ represents the usual Kronecker delta function.
They can be written in terms of basic hypergeometric functions in various ways; notably,
\[
K_r(i) =  v_r \cdot {}_3\phi_2( q^{-r}, q^{-i}, 0; \quad q^{-m}, q^{-n}; \quad q).
\]
(It is the ${}_3\phi_2$ evaluation on the right side that is called the \emph{affine $q$-\K polynomial} by Stanton \cite[(4.13)]{Stanton}; Delsarte \cite{Delsarte} gives a similar expression, but it contains an error.)  This formula exhibits the symmetry
\begin{equation}
\label{K symmetry}
\frac{K_r(i)}{v_r} = \frac{K_i(r)}{v_i},
\end{equation}
which is not obvious from the definition.
As orthogonal polynomials, the $q$-\K polynomials satisfy a three-term recurrence relation \cite[(4.14)]{Stanton}:
\begin{multline*}
q^{m + n} (q^{-k} - 1) K_i(k)  = 
q^i(q^{i + 1} - 1) K_{i + 1}(k) 
+ {}\\ 
(q^{m} - q^{i - 1})(q^{n} - q^{i - 1}) K_{i - 1}(k) 
-  
((q^m - q^i)(q^n - q^i) + q^{i - 1}(q^i - 1))K_i(k).
\end{multline*}
%Equivalently, applying \eqref{K symmetry} and rearranging, we have
%\begin{multline}
%\label{recurrence}
% (q^m - q^i)(q^n - q^i) K_k(i + 1) =  \\
%\left(q^{m + n} (q^{-k} - 1) + (q^m - q^i)(q^n - q^i) + q^{i - 1}(q^i - 1) \right)K_k(i) -
%q^{i - 1}(q^i - 1) K_k(i - 1).
%\end{multline}

The relevance of the $q$-\K polynomials to the present work is their
appearance in the following complement identity, expressing the number
of matrices of a given rank supported on a board in terms of the same
counts for the complementary board.

\begin{theorem}[{Complement identity \cite{Delsarte}}] \label{thm:MW}
For any board $B \subseteq [m] \times [n]$ with $m \leq n$ and any rank $r \leq m$, we have
\begin{equation} \label{eq:MW}
\mat_r(\overline{B}, q) = \frac{1}{q^{|B|}} \sum_{i=0}^m K_r(i) \cdot \mat_i(B,q).
\end{equation}
\end{theorem}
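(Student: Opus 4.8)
The plan is to prove the complement identity by a double-counting argument over the complementary board, organized through the orthogonality of the $q$-Krawtchouk polynomials. The key object to exploit is that the collection of matrices supported on $B$ forms a linear subspace (a rank-metric code), so the matrices supported on $\overline{B}$ are governed by the dual code. Concretely, I would first establish the \emph{rank distribution} version of Delsarte's theorem: if $\mathcal{C} \subseteq \F_q^{m \times n}$ is the space of matrices supported on $B$, with rank distribution $A_i = \mat_i(B,q)$, then the dual code $\mathcal{C}^\perp$ (with respect to the trace form $\langle X, Y\rangle = \Tr(XY^t)$) is exactly the space of matrices supported on $\overline{B}$, and its rank distribution is $A_i^\perp = \mat_i(\overline{B},q)$. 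The crux is then the MacWilliams-type transform relating $A^\perp$ to $A$ via the $q$-Krawtchouk polynomials.

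\smallskip

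\noindent \textbf{First}, I would verify the duality of supports. Writing $\mathcal{C} = \{X : \operatorname{supp}(X) \subseteq B\}$, one checks that $\langle X, Y \rangle = \sum_{(a,b)} X_{ab} Y_{ab}$ under the trace pairing, so $Y \in \mathcal{C}^\perp$ forces $Y_{ab} = 0$ for all $(a,b) \in B$, i.e.\ $\operatorname{supp}(Y) \subseteq \overline{B}$. This gives $\dim \mathcal{C} = |B|$ and $\dim \mathcal{C}^\perp = mn - |B| = |\overline{B}|$. \textbf{Second}, I would invoke the abstract MacWilliams identity for rank-metric codes: for any code $\mathcal{C}$ with rank-weight enumerator, the enumerator of $\mathcal{C}^\perp$ is obtained by applying the MacWilliams transform, whose kernel in the rank metric is precisely the $q$-Krawtchouk polynomial $K_r(i)$ defined in the excerpt. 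This is the content of \cite[Thms.~3.3, A2]{Delsarte}; the transform reads
\[
\mat_r(\overline{B}, q) = A_r^\perp = \frac{1}{|\mathcal{C}|}\sum_{i=0}^m K_r(i)\, A_i = \frac{1}{q^{|B|}}\sum_{i=0}^m K_r(i)\, \mat_i(B,q),
\]
using $|\mathcal{C}| = q^{|B|}$, which is exactly \eqref{eq:MW}.

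\smallskip

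\noindent \textbf{The main obstacle} is justifying that the MacWilliams transform for the rank metric is governed by exactly the normalization of $K_r(i)$ given here, rather than some rescaled variant. I would discharge this by a self-contained character-sum computation: the number of matrices of rank $i$ in $\mathcal{C}$ can be extracted from the character sum $\sum_{Y \in \mathcal{C}^\perp} \psi(\langle X, Y\rangle)$ averaged over rank-$i$ matrices $X$, where $\psi$ is a fixed nontrivial additive character of $\F_q$. Grouping the resulting double sum by the rank of $Y$ and evaluating the inner sum
\[
\sum_{\rk(X) = r} \;\sum_{\rk(Y) = i} \psi(\langle X, Y \rangle)
\]
over fixed ranks yields, after a standard but somewhat intricate evaluation, precisely $v_i \cdot K_r(i)$ (or $K_i(r)$, reconciled via the symmetry \eqref{K symmetry}); the orthogonality relation stated in the excerpt then guarantees that the transform is correctly inverted by these polynomials. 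Once this kernel evaluation is pinned down, the identity follows by substituting and solving for $A_r^\perp$, with the factor $q^{-|B|} = |\mathcal{C}|^{-1}$ emerging from the averaging. The bulk of the remaining work is bookkeeping with $q$-binomial coefficients, which I would not grind through here, referring instead to Delsarte and to Ravagnani \cite[Rem.~50]{Ravagnani} for the detailed verification.
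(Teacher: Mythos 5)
Your proposal is correct and matches the paper's approach exactly: the paper gives no independent proof of Theorem~\ref{thm:MW}, but cites it directly from Delsarte's MacWilliams identity for rank-metric codes, with the footnoted observation that the matrices supported on $B$ form a linear code (whose dual under the trace pairing is the set of matrices supported on $\overline{B}$, as you verify). Your additional character-sum sketch of the kernel evaluation is the standard derivation and is consistent with the orthogonality and symmetry relations recorded in Section~\ref{sec:notation}.
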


In the case $r=m$ of full-rank matrices, this formula simplifies.

\begin{corollary} \label{cor:fullrankrect}
For any subset $B$ of $[m]\times [n]$ with $m\leq n$, we have 
\begin{equation} 
\label{eq:fullrankrect}
\mat_m(\overline{B}, q) = (-1)^m q^{\binom{m}{2}-|B|} \sum_{i=0}^m
\mat_i(B,q)\cdot (q^{n-m+1}; q)_{m-i}.
\end{equation}
\end{corollary}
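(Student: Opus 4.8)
The plan is to obtain \eqref{eq:fullrankrect} as the special case $r = m$ of the complement identity \eqref{eq:MW}. Setting $r = m$ there gives
\[
\mat_m(\overline{B}, q) = \frac{1}{q^{|B|}} \sum_{i=0}^m K_m(i) \cdot \mat_i(B, q),
\]
so the entire content of the corollary is the evaluation of the full-rank $q$-\K polynomial as a single $q$-Pochhammer symbol, namely
\[
K_m(i) = (-1)^m q^{\binom{m}{2}} (q^{n - m + 1}; q)_{m - i}.
\]
Once this closed form is in hand, substituting it into the display above and pulling the constant $(-1)^m q^{\binom{m}{2}}$ out of the sum yields \eqref{eq:fullrankrect} immediately.

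To establish the closed form I would substitute $r = m$ into the defining sum for $K_r(i)$. The outer $q$-binomial coefficient collapses, since $\qbin{m - s}{m - s}{q} = 1$, and the index constraints $0 \leq m - s$ and $0 \leq s \leq m - i$ reduce to the single range $0 \leq s \leq m - i$. This leaves the single sum
\[
K_m(i) = \sum_{s = 0}^{m - i} (-1)^{m - s} q^{ns + \binom{m - s}{2}} \qbin{m - i}{s}{q}.
\]
The next step is to recognize the right-hand side as an instance of the $q$-binomial theorem \eqref{eq:q-binomial theorem}: taking $k = m - i$ and $z = q^{n - m + 1}$ there (with summation variable $s$, to avoid a clash with the rank index $i$) expands the target Pochhammer symbol as $(q^{n - m + 1}; q)_{m - i} = \sum_{s = 0}^{m - i} (-1)^s \qbin{m - i}{s}{q} q^{\binom{s}{2} + (n - m + 1)s}$.

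It then remains to compare the two expansions term by term against the common factor $\qbin{m - i}{s}{q}$. The signs match because $(-1)^{m - s} = (-1)^{m + s}$, and the powers of $q$ match because of the elementary identity $ns + \binom{m - s}{2} = \binom{m}{2} + \binom{s}{2} + (n - m + 1)s$, which is verified by a one-line expansion of the binomial coefficients. This proves the closed form for $K_m(i)$ and hence the corollary. I do not anticipate any real obstacle: the result is a direct specialization of Theorem~\ref{thm:MW}, and the only nontrivial point is the bookkeeping of $q$-exponents in the term-by-term comparison, together with the reminder to rename the summation variable so that it does not collide with the rank index $i$.
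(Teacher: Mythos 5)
Your proposal is correct and follows essentially the same route as the paper: both set $r = m$ in the complement identity, note that the outer $q$-binomial coefficient collapses, rewrite the exponent via $ns + \binom{m-s}{2} = \binom{m}{2} + \binom{s}{2} + (n-m+1)s$, and apply the $q$-binomial theorem \eqref{eq:q-binomial theorem} to identify the inner sum with $(q^{n-m+1};q)_{m-i}$. The only cosmetic difference is that you package the computation as a closed form for $K_m(i)$ before substituting, whereas the paper carries the double sum along and simplifies in place.
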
 

\begin{proof}
The case $r=m$ in \eqref{eq:MW} gives
\begin{align*}
\mat_m(\overline{B}, q) &= \frac{1}{q^{|B|}} \sum_{i=0}^m
\mat_i(B,q) \cdot \sum_{s} (-1)^{m-s}q^{\binom{m}{2}+\binom{s}{2}+s(n-m+1)}
\qbin{m-i}{s}{q}\\
&= (-1)^m q^{\binom{m}{2}-|B|} \sum_{i=0}^m
\mat_i(B,q) \left(\sum_{s=0}^{m-i} (-q^{n-m+1})^s
  q^{\binom{s}{2}} \qbin{m-i}{s}{q}\right).
\end{align*}
By the $q$-binomial theorem \eqref{eq:q-binomial theorem}, the inner sum simplifies to 
\[
\sum_{s=0}^{m-i} (-q^{n-m+1})^s
  q^{\binom{s}{2}} \qbin{m-i}{s}{q} = (q^{n-m+1};q)_{m-i},
\]
as desired.
\end{proof}

\begin{example}[$q$-analogue of derangements \cite{LLMPSZ, Ravagnani}] \label{ex:qderangements}
In the case $m = n$ and $B=\{(1,1),\ldots,(n,n)\}$, we have that
$\mat_i(B,q)=\binom{n}{i}(q-1)^i$.  Thus, by \eqref{eq:fullrankrect},
the number of $n\times n$ invertible matrices with zero diagonal is 
\begin{align*}
\mat_n(\overline{B}, q) &= (-1)^n q^{\binom{n}{2}-n} \sum_{i=0}^n
  \binom{n}{i} (q-1)^i (q;q)_{n-i}\\
&= q^{\binom{n}{2}-n} (q-1)^n \sum_{i=0}^n (-1)^i \binom{n}{i} [n-i]!_q.
\end{align*}
\end{example}

In Section~\ref{sec:menage-examples}, we give a formula for
$\mat_n(\overline{B'}, q)$ for the board $B'$ consisting of the main
diagonal $\{(1, 1), \ldots, (n, n)\}$ together with the next diagonal
$\{(1, 2), \ldots, (n - 1, n)\}$ and the entry $\{(n,1)\}$.  
The classical rook theory of the board 
$B'$ is the famous {\em probl\`eme des m\'enages}.

\section{$q$-hit numbers} \label{sec:qhit}

Consider a board $B$ contained in the rectangle $[m]\times [n]$.  In
this case, the
version of the 
classical relation \eqref{eq:classic_rookhit_rel} between hit numbers and rook numbers is
\begin{equation} \label{eq:classichitrookrect}
\sum_{i=0}^m h_i(B) \cdot t^i = \sum_{i=0}^m r_i(B) \cdot \frac{(n-i)!}{(n-m)!} \cdot (t-1)^i.
\end{equation}
This relation follows by the same argument as in the square case
(see, e.g., \cite[\S2.3]{EC1}).

We now define our $q$-hit numbers for general boards $B$. 
This definition is based on a suggestion of Remmel
(private communication) that is related to the construction of the
Garsia--Remmel $q$-hit numbers \cite[\S 1]{GarsiaRemmel}. 

\begin{definition}
Given a board $B\subseteq[m]\times [n]$ and a nonnegative integer $i$, define the \emph{$q$-hit number} $\nHit_i(B,q)$ by the equation
\begin{equation}
\label{definition of hits inside rectangle}
\sum_{i=0}^m \nHit_i(B,q) \cdot t^i 
\defeq 
q^{\binom{m}{2}}
\sum_{i=0}^m M_i(B,q) \cdot \frac{[n-i]!_q}{[n - m]!_q} \cdot (-1)^i
                           \cdot (t;q^{-1})_i.
\end{equation}
For each fixed $q$, both sides of this equation are polynomials in
$t$. We call this the \emph{$q$-hit polynomial} of
the board $B$ and denote it by $P(B, q, t)$.
\end{definition}

The remainder of this section is devoted to showing that these $q$-hit numbers
satisfy a variety of properties that one would expect from an object bearing
the name; this culminates in a natural probabilistic interpretation of the
$q$-hit numbers in Section~\ref{sec:RemmelHitIdea}.

\begin{example}
\label{ex:negpolyhit}
We give three examples of boards in the case $m = n = 2$; these are illustrated in Figure~\ref{fig:2 by 2 boards}.
\begin{figure}
\begin{center}
\includegraphics[scale=1.5]{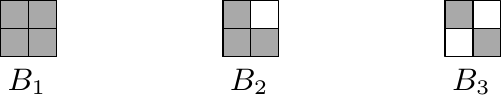}
\end{center}
\vspace{-.2in}
\caption{The three $2 \times 2$ boards mentioned in Example~\ref{ex:negpolyhit}.}
\label{fig:2 by 2 boards}
\end{figure}
\begin{compactenum}[(i)]
\item When $B_1 = [2] \times [2]$ is the entire square board, we have $M_0(B_1, q) = 1$, $M_1(B_1, q) = (q + 1)^2$, and $M_2(B_1, q) = q(q + 1)$.  Thus
\[
{P}(B_1, q, t) = q \left( (q + 1) + (q + 1)^2 \cdot (t - 1) + q(q + 1) \cdot (t - 1)(tq^{-1} - 1)\right) = (q^2 + q)t^2,
\]
and so $\nHit_0(B_1, q) = \nHit_1(B_1, q) = 0$ and $\nHit_2(B_1, q) = q^2 + q$.

\item When $B_2 = \overline{\{(1, 2)\}}$ is the $2 \times 2$ board with a single square removed, we have $M_0(B_2, q) = 1$, $M_1(B_2, q) = 2q + 1$, and $M_2(B_2, q) = q$.  Thus
\[
{P}(B_2, q, t) = q \left( (q + 1) + (2q + 1) \cdot (t - 1) + q \cdot (t - 1)(tq^{-1} - 1)\right) = q^2t + qt^2,
\]
and so $\nHit_0(B_2, q) = 0$, $\nHit_1(B_2, q) = q^2$, and $\nHit_2(B_2, q) = q$.

\item When $B_3 = \{(1, 1), (2, 2)\}$ comprises the two diagonal squares, we have $M_0(B_3, q) = 1$, $M_1(B_3, q) = 2$, and $M_2(B_3, q) = 1$.  Thus
\[
{P}(B_3, q, t) = q \left( (q + 1)  + 2 \cdot (t - 1) +  (t - 1)(tq^{-1} - 1)\right) = q^2 + (q - 1)t + t^2,
\]
and so $\nHit_0(B_3, q) = q^2$, $\nHit_1(B_2, q) = q - 1$, and $\nHit_2(B_2, q) = 1$.
\end{compactenum}
\end{example}

\subsection{Basic properties}
\label{sec:basic properties}

In this section, we establish several basic properties of $q$-hit numbers.
We begin by showing that $q$-hit numbers are enumerative
$q$-analogues of the classical hit numbers $h_i(B)$, justifying 
our choice of name.
 
\begin{proposition} \label{prop:qhitqanalogue}
Fix a prime power $q$.  For all $B\subseteq[m]\times [n]$ with $m\leq n$ and $i=0, 1,\ldots,m$, we have that 
\[
\nHit_i(B,q) \equiv  h_i(B) \pmod{q-1}. 
\]
\end{proposition}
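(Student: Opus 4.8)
The plan is to show that reducing the defining equation~\eqref{definition of hits inside rectangle} modulo $q-1$ collapses it onto the classical relation~\eqref{eq:classichitrookrect}, so that the $q$-hit numbers reduce to the ordinary hit numbers coefficient by coefficient. The natural strategy is to work in the polynomial ring $\F_q[t]$ reduced modulo $q-1$; that is, to substitute $q = 1$ wherever the expressions make sense after accounting for the divisibilities that are already built into the $M_i$ and the $q$-factorials.

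First I would record the relevant limits as $q \to 1$ of each ingredient on the right-hand side of~\eqref{definition of hits inside rectangle}. The prefactor $q^{\binom{m}{2}}$ tends to $1$. By~\eqref{eq:Mq-analogue}, $M_i(B,q) \equiv r_i(B) \pmod{q-1}$, so $M_i(B,q)$ reduces to $r_i(B)$. The ratio $[n-i]!_q/[n-m]!_q$ reduces to $(n-i)!/(n-m)!$, since $[k]!_q \equiv k! \pmod{q-1}$. The subtle factor is $(-1)^i (t; q^{-1})_i = (-1)^i \prod_{k=0}^{i-1}(1 - t q^{-k})$; as $q \to 1$ each factor $(1 - t q^{-k})$ tends to $(1-t)$, so the whole product tends to $(1-t)^i$ and $(-1)^i(1-t)^i = (t-1)^i$. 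Thus term by term the right side of~\eqref{definition of hits inside rectangle} reduces to the right side of~\eqref{eq:classichitrookrect}, namely $\sum_i r_i(B)\,\frac{(n-i)!}{(n-m)!}\,(t-1)^i$, which by definition equals $\sum_i h_i(B)\,t^i$.

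The main point requiring care, and the step I expect to be the only real obstacle, is justifying that these pointwise limits may be taken \emph{coefficientwise in $t$} and that the reductions are genuine congruences mod $q-1$ in the ring of polynomials rather than mere numerical limits. Concretely, I would argue as follows. Both sides of~\eqref{definition of hits inside rectangle} are polynomials in $t$ whose coefficients are polynomials (or at least Laurent polynomials, cleared by the $q^{\binom m2}$ and factorial normalizations) in $q$ with integer coefficients; reduction modulo $q-1$ is the ring homomorphism $\ZZ[q,q^{-1}] \to \ZZ$ sending $q \mapsto 1$, and it commutes with the finite sums and products appearing here. Applying this homomorphism to~\eqref{definition of hits inside rectangle} and using the three reductions above, I obtain that the reduction of $\sum_i \nHit_i(B,q) t^i$ equals $\sum_i h_i(B) t^i$ in $\ZZ[t]$. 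Comparing coefficients of $t^i$ then yields $\nHit_i(B,q) \equiv h_i(B) \pmod{q-1}$ for each $i$.

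I would close by noting the one bookkeeping subtlety worth a sentence: the homomorphism $q \mapsto 1$ is well defined on $M_i(B,q)$ because, although $\mat_i(B,q)$ vanishes at $q=1$, the quotient $M_i = \mat_i/(q-1)^i$ has a well-defined value there, and~\eqref{eq:Mq-analogue} is precisely the statement that this value is $r_i(B)$. With that established, no further estimates are needed: the result is a direct consequence of the compatibility of~\eqref{definition of hits inside rectangle} with reduction mod $q-1$ together with the classical identity~\eqref{eq:classichitrookrect}.
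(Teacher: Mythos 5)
Your proof is correct and takes essentially the same route as the paper's: reduce the defining equation \eqref{definition of hits inside rectangle} modulo $q-1$ term by term (using \eqref{eq:Mq-analogue} for the $M_i$, the reduction of $q$-factorials to ordinary factorials, and $(-1)^i(t;q^{-1})_i \equiv (t-1)^i$), identify the result with the classical relation \eqref{eq:classichitrookrect}, and equate coefficients of $t^i$. The extra care you take in justifying that the reduction is a well-defined ring homomorphism on $\ZZ[q,q^{-1}]$ is sound but not something the paper's proof dwells on.
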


\begin{proof}
We start with \eqref{definition of hits inside rectangle} and take the residue
modulo $q-1$. By \eqref{eq:Mq-analogue}, we have for each $i$ 
that $M_i(B,q) \equiv r_i(B) \pmod{q-1}$, and thus  
\[
\sum_{i=0}^n \nHit_i(B,q) \cdot t^{i}  \equiv  
\sum_{i=0}^m r_i(B) \cdot \frac{(n - i)!}{(n-m)!} \cdot (t-1)^i
\pmod{q-1}.
\]
The right side of this equivalence is the generating function \eqref{eq:classichitrookrect} for the classical hit numbers, so
\[
\sum_{i=0}^n \nHit_i(B,q) \cdot t^{i}  \equiv  \sum_{i=0}^m h_i(B) \cdot 
t^i \pmod{q-1}.
\]
By equating coefficients of $t^i$ for $i=0,\ldots,m$, we obtain the
desired result.
\end{proof}

\begin{remark}
For the $q$-rook numbers, we have that $M_i(B, q) = 0$ if and only if the usual rook number $r_i(B)$ is also equal to $0$.  However, this does not hold for $q$-hit numbers.  For example, in Example~\ref{ex:negpolyhit}(iii)
with $m = n = 2$ and $B=\{(1,1),(2,2)\}$, we have $h_1(B) = 0$ (every permutation has either $0$ or $2$ rooks on $B$) but $\nHit_1(B, q) = q-1$.
\end{remark}

In the classical setting, maximal rook placements on $B$ are exactly
the same as rook placements in which all rooks land on $B$, and so
$r_m(B) = h_m(B)$. Moreover, from \eqref{eq:classic_rookhit_rel} one
can write each rook number in terms of hit numbers and vice versa. The
next proposition shows that the analogous results hold for $q$-hit numbers and
matrix counts.

\begin{proposition} \label{prop:hitasMs}
For all $B\subseteq[m]\times [n]$ and for all $k=0,1,\ldots,m$ we have that
\begin{align*}
\nHit_k(B,q)  &= q^{\binom{k + 1}{2} + \binom{m}{2}} \sum_{i=k}^{m} M_i(B,q) \cdot
\frac{[n-i]!_q}{[n-m]!_q} \qbin{i}{k}{q} (-1)^{i + k}
                q^{-ik}\\
\intertext{and}
M_k(B, q) &= q^{\binom{k}{2} -\binom{m}{2}} \frac{[n-m]!_q}{[n-k]!_q}
           \sum_{i=k}^m \nHit_i(B,q) \qbin{i}{k}{q}.
\end{align*}
In particular,
\[
\nHit_m(B,q) = M_m(B, q).
\]
\end{proposition}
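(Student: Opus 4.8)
The plan is to treat the proposition as a pair of dual coefficient-extraction computations applied to the defining generating-function identity \eqref{definition of hits inside rectangle}, namely
$P(B,q,t) = \sum_i \nHit_i(B,q)\, t^i = q^{\binom{m}{2}} \sum_i M_i(B,q)\, \frac{[n-i]!_q}{[n-m]!_q} (-1)^i (t;q^{-1})_i$.
The entire content is that the two expansions of $P(B,q,t)$ -- one in the monomial basis $\{t^k\}$ with coefficients $\nHit_k$, the other in the ``shifted factorial'' basis $\{(t;q^{-1})_i\}$ with coefficients proportional to $M_i$ -- are related by an explicit triangular change of basis, so passing between them is immediate. This is the exact $q$-analogue of inverting the classical relation \eqref{eq:classic_rookhit_rel} to write hit numbers in terms of rook numbers and vice versa.

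For the first formula I would expand each $(t;q^{-1})_i$ into monomials. Applying the $q$-binomial theorem \eqref{eq:q-binomial theorem} with $q \mapsto q^{-1}$ and $z \mapsto t$ gives $(t;q^{-1})_i = \sum_{k=0}^i (-1)^k \qbin{i}{k}{q^{-1}} q^{-\binom{k}{2}} t^k$. Converting to base $q$ via the reflection identity $\qbin{i}{k}{q^{-1}} = q^{-k(i-k)}\qbin{i}{k}{q}$ (which follows directly from the definition of the $q$-factorial) and combining exponents yields $(t;q^{-1})_i = \sum_k (-1)^k q^{\binom{k+1}{2}-ki}\qbin{i}{k}{q}\, t^k$. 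Substituting this into \eqref{definition of hits inside rectangle}, interchanging the two sums, and reading off the coefficient of $t^k$ produces the first displayed formula directly.

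For the inverse formula I would instead expand the monomials in $P(B,q,t)=\sum_k \nHit_k(B,q)\, t^k$ using the inverse $q$-binomial relation \eqref{eq:inverse q-binomial}, $t^k = \sum_{i=0}^k (-1)^i \qbin{k}{i}{q} q^{\binom{i}{2}} (t;q^{-1})_i$, and interchange sums to rewrite $P(B,q,t)$ back in the basis $\{(t;q^{-1})_i\}$. Since each $(t;q^{-1})_i$ has degree $i$ with leading coefficient $(-1)^i q^{-\binom{i}{2}}$, these polynomials are linearly independent, so I can equate the coefficient of $(t;q^{-1})_i$ in this expansion with the one on the right-hand side of \eqref{definition of hits inside rectangle}. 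Canceling the common factor $(-1)^i$ and solving for $M_i(B,q)$ gives the second displayed formula after renaming indices. Finally, setting $k=m$ in either formula collapses the sum to the single term $i=m$, where $\qbin{m}{m}{q}=1$ and all powers of $q$ and the $q$-factorial ratio cancel, yielding $\nHit_m(B,q)=M_m(B,q)$.

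I expect no genuine conceptual obstacle: once the two change-of-basis identities are in hand, the argument is purely formal. The only step demanding real care is the exponent bookkeeping -- checking $\binom{i}{2}-\binom{k}{2}-\binom{i-k}{2}=k(i-k)$ (which underlies the reflection identity) and $-k(i-k)-\binom{k}{2}=\binom{k+1}{2}-ki$ -- together with correctly carrying the $q$-factorial ratio $[n-i]!_q/[n-m]!_q$ and the global prefactor $q^{\binom{m}{2}}$ through the inversion. None of this is hard, but it is the place where a stray sign or power of $q$ could slip in.
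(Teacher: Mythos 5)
Your proposal is correct and matches the paper's proof exactly: the paper likewise obtains the two formulas by extracting the coefficients of $t^k$ and of $(t;q^{-1})_k$ from both sides of \eqref{definition of hits inside rectangle}, using the $q$-binomial theorem \eqref{eq:q-binomial theorem} and its inverse \eqref{eq:inverse q-binomial} and rearranging powers of $q$. Your exponent bookkeeping (in particular $-k(i-k)-\binom{k}{2}=\binom{k+1}{2}-ki$) checks out, and the $k=m$ specialization is handled correctly.
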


\begin{proof}
The relations follow by extracting the coefficients of $t^k$ and
$(t;q^{-1})_k$ respectively from both sides
of \eqref{definition of hits inside rectangle} using the 
$q$-binomial theorem \eqref{eq:q-binomial theorem} and its inverse transformation 
\eqref{eq:inverse q-binomial}, and rearranging powers of $q$.
\end{proof}

Another straightforward result in the classical case is that $\sum_i h_i(B) = n(n - 1) \cdots (n - m + 1)$, since both sides count the total number of maximal non-attacking rook placements on $[m] \times [n]$.  The next result is the analogue in our setting.

\begin{corollary} \label{cor:hit partition}
 For all $B\subseteq[m]\times [n]$, we have
\[
(q-1)^m\sum_{i=0}^m \nHit_i(B,q) = v_m =(q^n-1)(q^n-q)\cdots(q^n-q^{m-1}).
\]
\end{corollary}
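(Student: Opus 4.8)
The plan is to evaluate the defining equation \eqref{definition of hits inside rectangle} at $t = 1$. On the left-hand side this immediately produces $\sum_{i=0}^m \nHit_i(B,q)$, the quantity we wish to control. The key observation is that the $q$-Pochhammer factor $(t; q^{-1})_i = \prod_{j=0}^{i-1}(1 - t q^{-j})$ includes the factor $(1 - t)$ coming from the index $j = 0$, which vanishes at $t = 1$. Hence every term with $i \geq 1$ on the right-hand side dies, and only the $i = 0$ summand survives.

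Next I would compute that lone surviving term. Since the only $m \times n$ matrix of rank $0$ is the zero matrix, we have $\mat_0(B,q) = 1$ and therefore $M_0(B,q) = 1$ for every board $B$. Using $(1; q^{-1})_0 = 1$, setting $t = 1$ in \eqref{definition of hits inside rectangle} thus gives
\[
\sum_{i=0}^m \nHit_i(B,q) = q^{\binom{m}{2}} \frac{[n]!_q}{[n-m]!_q}.
\]
Multiplying both sides by $(q-1)^m$ reduces the claim to the purely $q$-arithmetic identity $(q-1)^m q^{\binom{m}{2}} [n]!_q / [n-m]!_q = v_m$.

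Finally I would verify this identity by unwinding the definitions of the $q$-factorial and of $v_m$. Writing $[n]!_q / [n-m]!_q = (q-1)^{-m} \prod_{i=n-m+1}^{n}(q^i - 1)$ cancels the factor $(q-1)^m$, leaving $q^{\binom{m}{2}} \prod_{i=n-m+1}^n (q^i-1)$. On the other side, pulling the factor $q^j$ out of each term of $v_m = \prod_{j=0}^{m-1}(q^n - q^j)$ produces $q^{\sum_{j=0}^{m-1} j} \prod_{j=0}^{m-1}(q^{n-j}-1) = q^{\binom{m}{2}} \prod_{i=n-m+1}^n(q^i - 1)$, matching the left side and completing the argument.

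There is no serious obstacle here: the result is a direct specialization of the definition. The only point requiring care is recognizing that the factor $(1 - t)$ inside $(t;q^{-1})_i$ forces all but the $i = 0$ term to vanish at $t = 1$; after that, the proof is a short bookkeeping computation with $q$-factorials.
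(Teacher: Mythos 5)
Your proof is correct and is essentially the paper's argument: the paper sets $k=0$ in the second relation of Proposition~\ref{prop:hitasMs}, which is precisely the evaluation of \eqref{definition of hits inside rectangle} at $t=1$ (since $(t;q^{-1})_i$ vanishes there for $i\geq 1$), arriving at the same identity $\sum_i \nHit_i(B,q) = q^{\binom{m}{2}}[n]!_q/[n-m]!_q$ via $M_0(B,q)=1$. The concluding $q$-factorial bookkeeping matches as well.
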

%\begin{proof}
%Set $t=1$ in \eqref{definition of hits inside rectangle} and on the
%right-hand-side all terms vanish other than the one for $i=0$ for which
%$M_0(B,q)=1$. Thus we obtain
%\begin{align*}
%\sum_{i=0}^m \nHit_i(B,q)  &= 
%q^{\binom{m}{2}} M_0(B,q) \cdot \frac{[n]!_q}{[n-m]!_q} \\
%&= q^{\binom{m}{2}} \frac{[n]!}{[n-m]!} = \frac{(q^n-1)(q^n-q)\cdots (q^n-q^{m-1})}{(q-1)^m},
%\end{align*}
%as claimed.
%\end{proof}
\begin{proof}
Set $k=0$ in the second equation in Proposition~\ref{prop:hitasMs} to obtain
\begin{align*}
\sum_{i=0}^m \nHit_i(B,q)  &= 
q^{\binom{m}{2}} M_0(B,q) \cdot \frac{[n]!_q}{[n-m]!_q} \\
&= q^{\binom{m}{2}} \frac{[n]!_q}{[n-m]!_q} = \frac{(q^n-1)(q^n-q)\cdots (q^n-q^{m-1})}{(q-1)^m},
\end{align*}
as claimed.
\end{proof}

This proposition is particularly suggestive when $m = n$, and the right side becomes $|\GL_n(\F_q)|$ -- see Question~\ref{open question:interpretation} below.  
We end this section with a final property that $q$-hit numbers share with classical hit numbers.

\begin{proposition} \label{prop:invariance}
For all $B\subseteq[m]\times [n]$ and for all $i=0,1,\ldots,m$,
the $q$-hit number $\nHit_i(B,q)$ is invariant under permuting rows and columns of $B$.
\end{proposition}
\begin{proof}
The numbers $M_i(B,q)$ are invariant under permuting rows and columns
of $B$, so the result follows immediately from \eqref{definition of hits inside rectangle}.
\end{proof}

\subsection{Reciprocity}
\label{sec:reciprocity}

In this section, we use the complement identity (Theorem~\ref{thm:MW}) to prove a reciprocity theorem for $q$-hit polynomials analogous to the classical \eqref{eq:classic-hit-recip}.  We begin with a technical lemma.

\begin{lemma}
\label{polynomial lemma}
For any positive integers $m \leq n$ and nonnegative integer $i \leq m$, we have
\begin{equation}
\label{eq:K identity}
\sum_{r=0}^m K_{r}(i)\cdot (q^{n - m + 1}; q)_{m - r} \cdot (t; q^{-1})_r
=
t^m \cdot (q^{n -m + 1}; q)_{m-i} \cdot (t^{-1}q^{n}; q^{-1})_i.
\end{equation}
\end{lemma}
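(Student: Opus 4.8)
The plan is to prove the stated identity as a polynomial identity in $t$ by direct $q$-series manipulation, reducing it to two applications of the terminating $q$-Chu--Vandermonde summation (a form of the $q$-binomial theorem; see, e.g., \cite{GasperRahman}). Note first that the left-hand side is already displayed as an expansion in the basis $\{(t;q^{-1})_r\}_{r=0}^m$, so both sides are polynomials in $t$ of degree $m$; it suffices to prove equality of these polynomials. I would substitute the definition of $K_r(i)$ and exchange the order of summation, pulling the index $s$ of the inner sum in $K_r(i)$ (which carries the factor $\qbin{m-i}{s}{q}q^{ns}$) to the outside. Writing $r=s+a$ and using $(t;q^{-1})_{s+a}=(t;q^{-1})_s\,(tq^{-s};q^{-1})_a$ to split off the $s$-dependent factor, the left-hand side becomes
\[
\sum_{s=0}^{m-i}\qbin{m-i}{s}{q}q^{ns}\,(t;q^{-1})_s\sum_{a=0}^{m-s}(-1)^a q^{\binom{a}{2}}\qbin{m-s}{a}{q}(q^{n-m+1};q)_{m-s-a}\,(tq^{-s};q^{-1})_a.
\]

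The inner sum, call it $T_{m-s}$, is the key step. Converting the base-$q^{-1}$ Pochhammer symbols to base $q$ via the reflection $(y;q^{-1})_a=(-y)^a q^{-\binom{a}{2}}(y^{-1};q)_a$ together with $(-1)^a q^{\binom{a}{2}}\qbin{N}{a}{q}=q^{Na}(q^{-N};q)_a/(q;q)_a$, I would rewrite $T_N$ (with $N=m-s$) as $(q^{n-m+1};q)_N\cdot{}_2\phi_1(q^{-N},q^s t^{-1};q^{s-n};q,tq^{m-s-n})$. The balancing relation $tq^{m-s-n}=cq^{N}/b$ holds for $b=q^s t^{-1}$, $c=q^{s-n}$, so the terminating $q$-Chu--Vandermonde summation applies and collapses the inner sum to the closed form
\[
T_{m-s}=q^{-s(m-s)}\,t^{m-s}\,(t^{-1}q^n;q^{-1})_{m-s}=q^{-s(m-s)}\prod_{j=0}^{m-s-1}(t-q^{n-j}).
\]

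Substituting this back leaves a single sum over $s$. Setting $x=t^{-1}$, converting Pochhammers to base $q$ as before, and using $(xq^{n-m+s+1};q)_{m-s}=(xq^{n-m+1};q)_m/(xq^{n-m+1};q)_s$, this sum reduces to $t^m(xq^{n-m+1};q)_m\cdot{}_2\phi_1(q^{-(m-i)},x;xq^{n-m+1};q,q^{n-i+1})$. Once again the argument $q^{n-i+1}$ equals $cq^{N}/b$ for $N=m-i$, $b=x$, $c=xq^{n-m+1}$, so a second application of $q$-Chu--Vandermonde evaluates it; after simplifying the ratio $(xq^{n-m+1};q)_m/(xq^{n-m+1};q)_{m-i}=(xq^{n-i+1};q)_i$ and recognizing $(t^{-1}q^{n-i+1};q)_i=(t^{-1}q^n;q^{-1})_i$, one obtains exactly the right-hand side $t^m(q^{n-m+1};q)_{m-i}(t^{-1}q^n;q^{-1})_i$.

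The conceptual content is thus just two Vandermonde summations; the main obstacle is bookkeeping. Because the identity mixes the base-$q$ factor $(q^{n-m+1};q)_\bullet$ with the base-$q^{-1}$ factors $(t;q^{-1})_\bullet$ and $(t^{-1}q^n;q^{-1})_\bullet$, the delicate part is carrying out the base conversions and tracking the accumulated powers of $q$ so that at each stage the resulting ${}_2\phi_1$ lands exactly in the balanced form $z=cq^N/b$ required for $q$-Chu--Vandermonde. As an independent sanity check, after the inner summation the coefficient of $t^m$ on the left is $\sum_{s=0}^{m-i}(-1)^s\qbin{m-i}{s}{q}q^{\binom{s}{2}}(q^{n-m+1})^s=(q^{n-m+1};q)_{m-i}$ by the $q$-binomial theorem \eqref{eq:q-binomial theorem}, matching the leading coefficient of the right-hand side, and the cases $m=1$ (with $i=0,1$) can be verified by hand in a line.
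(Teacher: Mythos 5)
Your proposal is correct and follows essentially the same route as the paper's proof: expand $K_r(i)$, interchange the order of summation to bring the index $s$ outside, collapse the inner sum with a terminating $q$-Chu--Vandermonde summation (your $T_{m-s}=q^{-s(m-s)}t^{m-s}(t^{-1}q^n;q^{-1})_{m-s}$ matches the paper's intermediate expression exactly), and then apply $q$-Chu--Vandermonde a second time to the remaining sum over $s$. The only difference is cosmetic bookkeeping (your substitution $r=s+a$ and explicit ${}_2\phi_1$ normalizations versus the paper's direct base-$q^{-1}$ Pochhammer rewriting).
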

\begin{proof}
Denote by $L$ the left side of the identity to be proved.
Using the definition of the $q$-\K polynomials and reversing the order of summation gives
\begin{align*}
L & = \sum_{r=0}^m \sum_{s = 0}^{\min(r, m - i)} (-1)^{r-s} q^{ns+\binom{r-s}{2}} \qbin{m-s}{r-s}{q}\qbin{m-i}{s}{q}(q^{n - m + 1}; q)_{m - r} \cdot (t; q^{-1})_r \\
%& = \sum_{s = 0}^{m - i} q^{ns}\qbin{m-i}{s}{q} \sum_{r = s}^m (-1)^{r-s} q^{\binom{r-s}{2}} \qbin{m-s}{r-s}{q}(q^{n - m + 1}; q)_{m - r} \cdot (t; q^{-1})_r \\
%& =  \sum_{s = 0}^{m - i} q^{ns}\qbin{m-i}{s}{q} 
%     \sum_{r = s}^m (-1)^{r-s} q^{\binom{r-s}{2}} 
%         \frac{(q^{m - s}; q^{-1})_{r - s}}{(-1)^{r - s} q^{\binom{r - s + 1}{2}} (q^{-1}; q^{-1})_{r - s}}
%         \frac{(q^{n - s}; q^{-1})_{m - s}}{(q^{n - s}; q^{-1})_{r - s}}
%         (t; q^{-1})_s \cdot (tq^{-s}; q^{-1})_{r - s} \\
& =  \sum_{s = 0}^{m - i} q^{ns}\qbin{m-i}{s}{q} (q^{n - s}; q^{-1})_{m - s}(t; q^{-1})_s
     \sum_{r = s}^m q^{s - r} 
         \frac{(q^{m - s}; q^{-1})_{r - s} (tq^{-s}; q^{-1})_{r - s}}{(q^{-1}; q^{-1})_{r - s} (q^{n - s}; q^{-1})_{r - s}}.
\end{align*}
The $q$-Chu--Vandermonde identity \cite[(II.6)]{GasperRahman} asserts
\[
\sum_{k} \frac{(a; x)_k (x^{-N}; x)_k}{(c; x)_k (x; x)_k} x^k = \frac{(c/a; x)_N}{(c; x)_N} a^N.
\]
Setting $(a, c, x, N) \mapsto (tq^{-s}, q^{n - s}, q^{-1}, m - s)$, this implies
\begin{align*}
L & = \sum_{s = 0}^{m - i} q^{ns}\qbin{m-i}{s}{q} (q^{n - s}; q^{-1})_{m - s}(t; q^{-1})_s \cdot
     \frac{(t^{-1}q^n; q^{-1})_{m - s}}{(q^{n - s}; q^{-1})_{m - s}} t^{m - s} q^{-s(m - s)} \\
%  & = \sum_{s = 0}^{m - i} q^{ns}\qbin{m-i}{s}{q} (t; q^{-1})_s (t^{-1}q^n; q^{-1})_{m - s} t^{m - s} q^{-s(m - s)} \\
%  & = \sum_{s = 0}^{m - i} q^{ns} \frac{(q^{m - i}; q^{-1})_{s}}{(-1)^{s} q^{\binom{s + 1}{2}} (q^{-1}; q^{-1})_{s}}  (t; q^{-1})_s \frac{(t^{-1}q^n; q^{-1})_m }{(-1)^s t^{- s}q^{\binom{s + 1}{2} + (n - m)s} (t q^{-n +m - 1}; q^{-1})_s} t^{m - s} q^{-s(m - s)} \\
  & = t^m \cdot (t^{-1}q^n; q^{-1})_m \sum_{s = 0}^{m - i} \frac{(q^{m - i}; q^{-1})_{s} (t; q^{-1})_s}{  (q^{-1}; q^{-1})_{s} (t q^{-n +m - 1}; q^{-1})_s} q^{-s}.
\end{align*}
Applying $q$-Chu--Vandermonde again with $(a, c, x, N) \mapsto (t, tq^{-n + m - 1}, q^{-1}, m - i)$ gives
\[
L =  t^m \cdot (t^{-1}q^n; q^{-1})_m \frac{(q^{-n + m - 1}; q^{-1})_{m - i}}{(tq^{-n + m - 1}; q^{-1})_{m - i}} t^{m - i} = t^m \cdot (t^{-1}q^n; q^{-1})_i  \cdot (q^{n - m + 1}; q)_{m - i},
\]
as claimed.
\end{proof}

\begin{theorem}[Reciprocity] \label{thm:ourhitreciprocity}
For any board $B\subseteq[m] \times [n]$, the $q$-hit numbers of $B$ satisfy
\begin{equation} \label{eq:hitreciprocity1}
\nHit_{m-i}(\overline{B},q) =  q^{in-|B|} \cdot \nHit_i(B,q),
\end{equation}
or equivalently
\begin{equation}\label{eq:hitreciprocity2}
{P}(\overline{B}, q, t) = q^{-|B|}t^m \cdot {P}(B, q, q^{n}/t).
\end{equation}
\end{theorem}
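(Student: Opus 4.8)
The plan is to prove the equivalent generating-function form \eqref{eq:hitreciprocity2}, since it packages all the individual statements \eqref{eq:hitreciprocity1} into a single polynomial identity in $t$; extracting coefficients of $t^i$ then recovers \eqref{eq:hitreciprocity1} by matching powers. First I would write out $P(\overline{B}, q, t)$ using the definition \eqref{definition of hits inside rectangle}, which expresses it as $q^{\binom{m}{2}}\sum_i M_i(\overline{B},q)\cdot \frac{[n-i]!_q}{[n-m]!_q}\cdot(-1)^i (t;q^{-1})_i$. The natural move is to eliminate the complementary counts $M_i(\overline{B},q)$ in favor of the counts $M_i(B,q)$ for $B$ itself, and the tool for this is exactly the complement identity (Theorem~\ref{thm:MW}), which gives $\mat_r(\overline{B},q)=q^{-|B|}\sum_i K_r(i)\mat_i(B,q)$, and hence a corresponding relation between the reduced counts after dividing by the appropriate powers of $(q-1)$.

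The key computational step, after substituting, is to interchange the order of the two summations (one over the rank $r$ indexing $P(\overline{B},q,t)$, one over the rank $i$ coming from the complement identity) and to collect the terms that depend on $r$. This produces an inner sum of the shape $\sum_{r} K_r(i)\cdot(\text{Pochhammer factor})\cdot(t;q^{-1})_r$, and this is precisely the quantity evaluated in Lemma~\ref{polynomial lemma}. I would apply that lemma to replace the whole inner sum by the closed form $t^m\cdot(q^{n-m+1};q)_{m-i}\cdot(t^{-1}q^n;q^{-1})_i$. The appearance of $(q^{n-m+1};q)_{m-i}$ is fortuitous: it is exactly the factor that relates $M_i$-type counts to the $[n-i]!_q/[n-m]!_q$ normalization, so after this substitution the sum should reorganize back into something recognizable as a $q$-hit polynomial.

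The final step is to recognize the resulting expression as $q^{-|B|}t^m\cdot P(B,q,q^n/t)$. The substitution $t\mapsto q^n/t$ is the source of the $t^{-1}q^n$ arguments appearing in Lemma~\ref{polynomial lemma}, so I expect the factor $(t^{-1}q^n;q^{-1})_i$ to match the $(t;q^{-1})_i$ in the definition of $P(B,q,\cdot)$ after the substitution, while the overall $t^m$ and the $q^{-|B|}$ (inherited from the $q^{-|B|}$ in the complement identity) match the prefactors in \eqref{eq:hitreciprocity2}. The main obstacle is bookkeeping: correctly tracking the several powers of $q$ (from $q^{\binom{m}{2}}$, from the complement identity's $q^{-|B|}$, from the $(q-1)^r$ versus $(q-1)^i$ discrepancy between reduced and unreduced counts, and from the powers generated inside Lemma~\ref{polynomial lemma}) and confirming that the Pochhammer factors $(q^{n-m+1};q)_{m-i}$ combine cleanly with $\frac{[n-i]!_q}{[n-m]!_q}$ to reproduce exactly the normalization in the definition of $P(B,q,t)$. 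I expect no conceptual difficulty beyond Lemma~\ref{polynomial lemma} itself, which does all the analytic heavy lifting via the two applications of $q$-Chu--Vandermonde; the theorem is essentially the statement that that lemma, fed into the complement identity, collapses to the reciprocity.
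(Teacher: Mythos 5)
Your plan is correct and follows essentially the same route as the paper's proof: substitute the complement identity into the definition of $P(\overline{B},q,t)$, swap the order of summation, evaluate the inner sum $\sum_r K_r(i)\,(q^{n-m+1};q)_{m-r}\,(t;q^{-1})_r$ via Lemma~\ref{polynomial lemma}, and recognize the result as $q^{-|B|}t^m P(B,q,q^n/t)$, with \eqref{eq:hitreciprocity1} obtained by coefficient extraction. The bookkeeping you flag (the $(q-1)^r$ versus $(q-1)^m$ discrepancy absorbed into $(q^{n-m+1};q)_{m-r}$ together with $[n-r]!_q/[n-m]!_q$) works out exactly as you anticipate.
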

\begin{proof}
First we prove \eqref{eq:hitreciprocity2}.  Applying Theorem~\ref{thm:MW} to the definition \eqref{definition of hits inside rectangle} gives
\begin{align*}
q^{|B|}(q - 1)^m \sum_{i=0}^m \nHit_i(\overline{B},q)t^{i} 
&= (-1)^m q^{\binom{m}{2}} 
   \sum_{r=0}^m q^{|B|}\mat_r(\overline{B}, q) \cdot (q^{n - m + 1}; q)_{m - r} \cdot (t; q^{-1})_r \\ 
%& = (-1)^m q^{\binom{m}{2}}
%\sum_{r = 0}^m 
%\left(\sum_{i=0}^m \mat_i(B, q) K_{r}(i)\right)\cdot (q^{n - m + 1}; q)_{m - r} \cdot (t; q^{-1})_r \\
&= (-1)^m q^{\binom{m}{2} } \sum_{i=0}^m \mat_i(B,q) \sum_{r=0}^m K_{r}(i)\cdot (q^{n - m + 1}; q)_{m - r} \cdot (t; q^{-1})_r.
\end{align*}
By Lemma~\ref{polynomial lemma}, we may rewrite the last expression to give
\[
q^{|B|} (q - 1)^m \cdot {P}(\overline{B}, q, t) = 
% (-1)^m q^{\binom{m}{2} - mn} \sum_{i=0}^m \mat_i(B,q) q^{mn}t^m \cdot (q^{n -m + 1}; q)_{m-i} \cdot (t^{-1}q^{-i + 1}; q)_i
(-1)^m q^{\binom{m}{2}} t^m \sum_{i=0}^m \mat_i(B,q) \cdot (q^{n -m + 1}; q)_{m-i} \cdot (t^{-1}q^{n}; q^{-1})_i.
\]
Dividing by $q^{|B|}(q - 1)^m$ and comparing with the definition \eqref{definition of hits inside rectangle}, one immediately sees the result.  Finally, \eqref{eq:hitreciprocity1} follows by extracting the coefficient of $t^{m - i}$ in \eqref{eq:hitreciprocity2}.
\end{proof}

The classical zeroth hit number $h_0$ satisfies two simple formulas: 
first, for any board $B$, we have $h_0(B) = r_m(\overline{B})$,
as both sides count maximum-rank rook placements with all rooks
outside $B$.  Second, extracting the constant term from both sides of
\eqref{eq:classichitrookrect} gives the inclusion-exclusion formula
\[
h_0(B) = \sum_{i = 0}^m (-1)^i \cdot \frac{(n - i)!}{(n - m)!} \cdot r_i(B).
\]
Our next result is a $q$-analogue of these formulas.

\begin{corollary}
\label{cor:H_0 as inclusion-exclusion}
For any board $B\subseteq[m]\times [n]$ with $m\leq n$, we have
\[
\nHit_0(B,q) = q^{|B|}M_m(\overline{B}, q)
=  q^{\binom{m}{2}} \sum_{i=0}^m (-1)^i \cdot \frac{[n - i]!_q}{[n - m]!_q} \cdot M_i(B,q).
\]
\end{corollary}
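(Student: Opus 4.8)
The plan is to prove the two displayed equalities separately: the first, $\nHit_0(B,q) = q^{|B|}M_m(\overline{B},q)$, from the reciprocity theorem, and the second, the inclusion--exclusion formula, directly from the defining equation \eqref{definition of hits inside rectangle}.

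For the first equality I would specialize the reciprocity relation \eqref{eq:hitreciprocity1} of Theorem~\ref{thm:ourhitreciprocity} at $i = 0$. This reads $\nHit_m(\overline{B},q) = q^{-|B|}\nHit_0(B,q)$, so solving for the zeroth hit number gives $\nHit_0(B,q) = q^{|B|}\nHit_m(\overline{B},q)$. I would then invoke the identity $\nHit_m(B',q) = M_m(B',q)$ from Proposition~\ref{prop:hitasMs}, applied with $B' = \overline{B}$, to replace $\nHit_m(\overline{B},q)$ by $M_m(\overline{B},q)$ and conclude $\nHit_0(B,q) = q^{|B|}M_m(\overline{B},q)$. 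This is the $q$-analogue of the classical identity $h_0(B) = r_m(\overline{B})$.

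For the inclusion--exclusion formula the quickest route is simply to set $t = 0$ in \eqref{definition of hits inside rectangle}. On the left-hand side only the constant term $\nHit_0(B,q)$ survives, while on the right each factor $(t; q^{-1})_i = \prod_{j=0}^{i-1}(1 - tq^{-j})$ evaluates to $1$ at $t = 0$ (an empty product when $i = 0$). This immediately yields $\nHit_0(B,q) = q^{\binom{m}{2}} \sum_{i=0}^m (-1)^i \frac{[n-i]!_q}{[n-m]!_q} M_i(B,q)$, exactly as claimed, giving the $q$-analogue of the classical inclusion--exclusion for $h_0$.

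As a consistency check one can recover this same formula from the first equality together with the full-rank complement identity \eqref{eq:fullrankrect} of Corollary~\ref{cor:fullrankrect}: writing $\mat_k = (q-1)^k M_k$ throughout and converting the $q$-Pochhammer factor via $(q^{n-m+1};q)_{m-i} = (-1)^{m-i}(q-1)^{m-i}\,[n-i]!_q/[n-m]!_q$ turns \eqref{eq:fullrankrect} into the claimed relation after cancelling the common factor $(q-1)^m$. Given these inputs there is no genuine obstacle; the only point requiring care is the sign and $(q-1)$-power bookkeeping in this Pochhammer-to-factorial conversion, which the $t = 0$ derivation sidesteps entirely.
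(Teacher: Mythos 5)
Your proposal is correct. The first equality is proved exactly as in the paper: specialize the reciprocity relation \eqref{eq:hitreciprocity1} at $i=0$ to get $\nHit_0(B,q) = q^{|B|}\nHit_m(\overline{B},q)$, then use $\nHit_m(\overline{B},q) = M_m(\overline{B},q)$ from Proposition~\ref{prop:hitasMs}. For the second equality you diverge: the paper evaluates $M_m(\overline{B},q)$ via the full-rank complement identity \eqref{eq:fullrankrect} of Corollary~\ref{cor:fullrankrect} (so the inclusion--exclusion sum is obtained by chaining through the first equality), whereas you set $t=0$ directly in the defining equation \eqref{definition of hits inside rectangle}, using that $(0;q^{-1})_i = 1$ for every $i$. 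Your route is more elementary for that half --- it needs nothing beyond the definition, whereas the paper's second step ultimately rests on the Delsarte complement identity --- and it establishes $\nHit_0(B,q)$ equals the alternating sum independently of the first equality, with the middle term then linked by your first step. The paper's route has the mild advantage of making explicit that the alternating sum is literally a rewriting of $q^{|B|}M_m(\overline{B},q)$, but the two are equivalent, and your consistency check (the conversion $(q^{n-m+1};q)_{m-i} = (-1)^{m-i}(q-1)^{m-i}[n-i]!_q/[n-m]!_q$ and the cancellation of $(q-1)^m$) is exactly the bookkeeping the paper's proof performs implicitly.
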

\begin{proof}
By the $q$-hit reciprocity \eqref{eq:hitreciprocity1} 
we have that $\nHit_0(B,q) = q^{|B|} \cdot \nHit_m(\overline{B},q)$.  
Also, by Proposition~\ref{prop:hitasMs} we have that
$\nHit_m(\overline{B}, q)= M_m(\overline{B}, q)$. 
Combining these two gives the first formula.
For the second formula, we use 
\eqref{eq:fullrankrect} to evaluate $M_m(\overline{B}, q)$.
\end{proof}

\subsection{Probabilistic interpretation} \label{sec:RemmelHitIdea}

In this section, we give a probabilistic interpretation to the $q$-hit polynomial ${P}(B, q, t)$.  Consider a board $B$ contained in the rectangle $[m] \times [n]$, and let $B_k$ be the board that we get by adding $k$ rows of length $n$ below $B$, as in Figure~\ref{fig:extended board}.  Let $F_k(B, q) \defeq \mat_m(B_k, q)/q^{nk + |B|}$ be the probability that a random matrix with support on $B_k$ has rank $m$.  There is a natural generating function
\[
{F}_{\infty}(B, q, t) \defeq \sum_{k = 0}^\infty t^k F_k(B, q)
\]
for these numbers, which we may think of as counting \emph{infinite} matrices by the number of their rows at which they first achieve rank $m$.
The main result of this section shows a close relation between ${F}_\infty$ and ${P}(B, q, t)$, and so provides a probabilistic interpretation for the $q$-hit polynomial.

\begin{figure}
\includegraphics{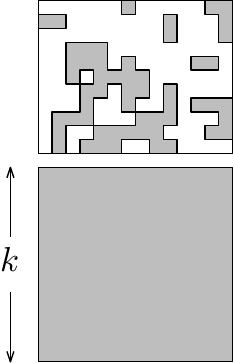}
\caption{Extending a board $B$ by $k$ rows.}
\label{fig:extended board}
\end{figure}

\begin{theorem} \label{thm:probinterpret}
For any board $B$, we have that the generating function of the
probabilities $F_k(B, q)$ is given by
\[
{F}_{\infty}(B, q, t) = \frac{ q^{-|B|- mn} t^m (q - 1)^m}{(tq^{-n};q)_{m + 1}}  {P}(B, q, q^{n}t^{-1}).
\]
\end{theorem}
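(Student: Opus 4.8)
The plan is to compute the generating function $F_\infty(B,q,t)$ directly from its definition and match it against the $q$-hit polynomial. The key observation is that $\mat_m(B_k,q)$, the number of full-rank matrices supported on the board $B_k$ (which has $m+k$ rows and $n$ columns), can itself be expressed via the matrix counts $\mat_i(B,q)$ of the original board. First I would count, for a fixed choice of the $m$-row block supported on $B$ having rank exactly $i$, in how many ways the appended $k$ rows of full length $n$ can be filled so that the entire $(m+k)\times n$ matrix has rank $m$. Since the top block spans an $i$-dimensional row space inside $\Fq^n$, the appended rows must contribute the remaining $m-i$ dimensions; the number of ways to choose $k$ vectors in $\Fq^n$ whose span together with a fixed $i$-dimensional space has dimension exactly $m$ is a standard $q$-count. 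This yields a formula of the shape
\[
\mat_m(B_k,q) = \sum_{i=0}^m \mat_i(B,q) \cdot N(i,k),
\]
where $N(i,k)$ counts completions and depends only on $i$, $k$, $m$, $n$, and $q$.

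Next I would substitute this into $F_\infty(B,q,t) = \sum_{k\ge 0} t^k \mat_m(B_k,q)/q^{nk+|B|}$ and interchange the order of summation, pulling the sum over $i$ outside. This reduces the problem to evaluating, for each fixed $i$, the inner geometric-type sum $\sum_{k\ge 0} (t q^{-n})^k N(i,k)$. I expect the completion count $N(i,k)$ to be a difference of products of the form $\prod_{j}(q^n - q^{\,\cdot})$, so that the inner sum is a $q$-series that telescopes or collapses via the $q$-binomial theorem \eqref{eq:general q-binomial theorem} into a ratio of $q$-Pochhammer symbols. The appearance of the denominator $(tq^{-n};q)_{m+1}$ in the target strongly suggests that after summing over $k$ one obtains a common factor $1/(tq^{-n};q)_{m+1}$ times a polynomial expression in $t$ whose $i$th term is proportional to $\mat_i(B,q)$ times a $q$-Pochhammer factor.

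Finally I would compare the resulting expression with the closed form for $P(B,q,t)$ obtained by clearing the $q$-factorial in the reciprocity-style definition \eqref{definition of hits inside rectangle}, or more directly with the expression for $P(B,q,q^n t^{-1})$ after the substitution $t \mapsto q^n t^{-1}$. The constant prefactor $q^{-|B|-mn} t^m (q-1)^m$ and the reciprocal argument $q^n t^{-1}$ should arise naturally once the inner $q$-series are summed and the overall powers of $q$ and factors of $(q-1)$ (coming from $M_i = \mat_i/(q-1)^i$) are reconciled. The main obstacle I anticipate is correctly determining the completion count $N(i,k)$ and then recognizing the inner sum over $k$ as summing to the required $q$-Pochhammer ratio; this is where the bookkeeping of exponents of $q$ is most delicate, and where one must be careful that the $\Fq$-span of the top block together with the appended rows reaches dimension exactly $m$ (not merely at least $m$), which is what produces the inclusion-exclusion structure matching the alternating $(t;q^{-1})_i$-type factors in $P$.
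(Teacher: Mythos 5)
Your plan follows the paper's proof essentially step for step: you decompose $\mat_m(B_k,q)$ by the rank $i$ of the top block (the paper computes your completion count $N(i,k)$ as $q^{ik}$ times the number of $k\times(n-i)$ matrices of rank $m-i$, via column echelon form), interchange the sums, evaluate the inner sum over $k$ with the $q$-binomial theorem \eqref{eq:general q-binomial theorem} to produce the $q$-Pochhammer ratio with common denominator $(tq^{-n};q)_{m+1}$, and match against $P(B,q,q^nt^{-1})$. The only slight inaccuracy is that the exact-rank condition is handled by directly counting matrices of exact rank $m-i$ (a closed product formula), not by inclusion-exclusion, but this does not change the argument.
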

\begin{proof}
We begin by giving an alternate expression for $F_k(B, q)$.  For each matrix $X$ of rank $r$ with support in $B$, the number of ways to extend $X$ to a matrix of rank $m$ with support in $B_k$ is exactly
\[
q^{rk} \cdot \#\{ k \times (n - r) \textrm{ matrices } Y \textrm{ with } \rk(Y) = m - r \}.
\]
(This fact may be explained in more or less sophisticated language; at its simplest, it follows easily after multiplication on the right by an invertible matrix to put the $m \times n$ block in reduced column echelon form.)  The second factor is an instance of the expression we denote $v_{m - r}$.  It is equal to $0$ if $k - m + r < 0$, and by some easy manipulations of $q$-Pochhammer symbols we may write it as $(-1)^{m - r} q^{\binom{m - r}{2}} \frac{(q^{n - r}; q^{-1})_{m - r} \cdot (q^{m - r + 1}; q)_{k - m + r}}{(q; q)_{k - m + r}}$ otherwise.
On the other hand, the number of choices of $X$ is simply $\mat_r(B, q)$, so summing over all choices of $r$ we have
\[
F_k(B, q) = q^{-nk - |B|} \sum_{r = m - k}^m \mat_r(B, q) \cdot (-1)^{m - r} q^{rk + \binom{m - r}{2}} \frac{(q^{n - r}; q^{-1})_{m - r} \cdot (q^{m - r + 1}; q)_{k - m + r}}{(q; q)_{k - m + r}}.
\]
Plugging this in to the definition of ${F}_\infty$ and rearranging yields
\begin{align*}
{F}_\infty & = \sum_{k = 0}^\infty t^k q^{-nk - |B|} \left(\sum_{r = m - k}^m (-1)^{m - r} \mat_r(B, q)  q^{rk + \binom{m - r}{2}} \frac{(q^{n - r}; q^{-1})_{m - r} \cdot (q^{m - r + 1}; q)_{k - m + r}}{(q; q)_{k - m + r}}\right) \\
& = q^{ - |B|} \sum_{r = 0}^m (-1)^{m - r} \mat_r(B, q) q^{\binom{m - r}{2}} (q^{n - r}; q^{-1})_{m - r}\left( \sum_{k = m - r}^\infty t^k q^{(r-n)k} \frac{ (q^{m - r + 1}; q)_{k - m + r}}{(q; q)_{k - m + r}} \right).
\end{align*}
Up to a power of $tq^{r - n}$, the inner sum is equal to the summation side of \eqref{eq:general q-binomial theorem} upon substituting $a \mapsto q^{m - r + 1}$, $z \mapsto tq^{r - n}$ and $i \mapsto k - m + r$.  Thus,
\begin{align*}
{F}_\infty  & = 
  q^{ - |B|} \sum_{r = 0}^m (-1)^{m - r} 
  \mat_r(B, q) q^{\binom{m - r}{2}} (q^{n - r}; q^{-1})_{m - r}
  \cdot(tq^{r - n})^{m - r} 
  \cdot \frac{(tq^{m - n + 1}; q)_\infty}{(tq^{r - n}; q)_\infty} \\
 & = 
  q^{ - |B|} \sum_{r = 0}^m (-1)^{m - r} 
  \mat_r(B, q)  (q^{n - r}; q^{-1})_{m - r}
  \cdot \frac{t^{m - r}q^{\binom{m - r}{2} + (r - n)(m - r)}}{(tq^{r - n}; q)_{m - r + 1}}.
\end{align*}
Putting this over a common denominator of $(tq^{-n};q)_{m + 1}$ and rearranging powers of $q$ and $t$ gives
\begin{align*}
{F}_\infty &= \frac{ q^{\binom{m}{2} -|B| - mn}}{(tq^{-n};q)_{m + 1}}  \sum_{r = 0}^m  (-1)^{m - r} \mat_r(B, q) \cdot t^{m - r} (q^{n - m + 1}; q)_{m - r} \cdot q^{ - \binom{r}{2} + nr} \frac{(tq^{-n};q)_{m + 1}}{ (tq^{r - n};q)_{m - r + 1}} \\
%& =  \frac{ q^{\binom{m}{2} -|B| - mn}}{(tq^{-n};q)_{m + 1}}  \sum_{r = 0}^m  (-1)^{m - r} \mat_r(B, q) \cdot t^{m - r} (q^{n - m + 1}; q)_{m - r} \cdot q^{ - \binom{r}{2} + nr} (tq^{-n}; q)_r \\
%& =  \frac{ q^{\binom{m}{2} -|B| - mn}}{(tq^{-n};q)_{m + 1}}  \sum_{r = 0}^m  (-1)^{m - r} \mat_r(B, q) \cdot t^{m - r} (q^{n - m + 1}; q)_{m - r} \cdot q^{ - \binom{r}{2} + nr} (-1)^r t^r q^{\binom{r}{2} - rn} (t^{-1}q^{n}; q^{-1})_r \\
%& =  \frac{ q^{\binom{m}{2} -|B| - mn}}{(tq^{-n};q)_{m + 1}}  \sum_{r = 0}^m  (-1)^{m} \mat_r(B, q) \cdot t^{m - r} (q^{n - m + 1}; q)_{m - r} \cdot q^{ - \binom{r}{2} + nr} \cdot t^r q^{\binom{r}{2} - rn} (t^{-1}q^{n}; q^{-1})_r \\
%& = \frac{ q^{\binom{m}{2} -|B| - mn} t^m}{(tq^{-n};q)_{m + 1}}  \sum_{r = 0}^m  (-1)^{m} \mat_r(B, q) \cdot (q^{n - m + 1}; q)_{m - r} \cdot (t^{-1}q^{n}; q^{-1})_r \\
& = \frac{ q^{\binom{m}{2} -|B| - mn} t^m (q - 1)^m}{(tq^{-n};q)_{m + 1}}  \sum_{r = 0}^m  M_r(B, q) \cdot \frac{[n - r]!_q}{[n - m]!_q} \cdot (-1)^r \cdot (t^{-1}q^{n}; q^{-1})_r \\
& = \frac{ q^{-|B|-mn} t^m (q - 1)^m}{(tq^{-n};q)_{m + 1}} {P}(B, q, q^nt^{-1}),
\end{align*}
as claimed. 
\end{proof}

\begin{remark}
One could do the same computation for any rank $R$ between $m$ and $n$, inclusive (above we calculated with $R = m$).  The calculations are not substantially different; only the relatively tame factor in front changes.
\end{remark}

\begin{remark}
Garsia and Remmel obtained an analogue \cite[(I.12)]{GarsiaRemmel} 
of $F_{\infty}(B,q,t)$ when $B$ is a Ferrers board by considering
rook placements on an extended board.  In fact, their relation can be obtained from
Theorem~\ref{thm:probinterpret} using the results from Section~\ref{sec:NE property} below.
\end{remark}

\section{Boards with the NE property and their complements}
\label{sec:NE property}

\subsection{Garsia--Remmel $q$-rook numbers}

Given a placement $c$ of $r$ non-attacking rooks on a board $B$, Garsia and 
Remmel \cite{GarsiaRemmel} defined a {\em NE inversion} of $c$
to be a cell in $B$ that is not directly north or 
directly east of a rook in $c$.  Denote by $\inv^{\NE}_B(c)$ 
the number of NE inversions of the rook placement $c$.  This statistic
gives rise to a $q$-analogue 
\begin{equation} \label{eq:GRqrooknumber}
R^{\NE}_{r}(B,q) \defeq \sum_c q^{\inv_B^{\NE}(c)}
\end{equation}
of the rook number,
where the sum is over placements $c$ of $r$ non-attacking rooks on $B$.

A board $B\subseteq[m] \times [n]$ is said to have the \emph{NE property} 
if for all $i,i' \in [m]$ and $j,j' \in [n]$ such that $i<i'$ and $j<j'$,
we have that if $(i,j), (i',j)$, and $(i',j')$ are in $B$ then $(i,j')$ 
is also in $B$:
\begin{center}
\includegraphics{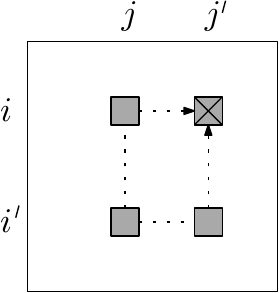}
\end{center}
These boards are 
convenient to work with, for the following reason: they are precisely the boards $B$ such that whenever the product $U_1 \cdot w \cdot U_2$, involving the rook placement $w$ on $[m] \times [n]$ and two upper-triangular matrices $U_1, U_2$ of respective sizes $m \times m$ and $n \times n$, has support on $B$, then $w$ is supported on $B$.  This means that the Bruhat decomposition, or equivalently Gaussian elimination, plays very nicely with matrices supported on $B$.  This observation may be exploited to give the following result, connecting our $q$-rook numbers with those of Garsia and Remmel.

\begin{theorem}[{\cite[Thm.~1]{Haglund}, \cite[Thm.~4.2]{KleinLM}}] 
\label{thm:polyqNE}
Fix any board $B \subseteq[m] \times [n]$ with the NE
property and any positive integer $r$.  
The number of $m\times n$ matrices over $\F_q$ of rank
$r$ whose support is in $B$ is 
\[
\mat_r(B, q) = (q-1)^r q^{|B|-r} \cdot R_r^{\NE}(B, q^{-1}).
\]
\end{theorem}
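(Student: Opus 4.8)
The plan is to prove the identity by partitioning the rank-$r$ matrices supported on $B$ according to a canonically attached rook placement—their pivot positions—and computing the size of each block. Concretely, I would show that the number of rank-$r$ matrices supported on $B$ whose pivot placement is a given placement $c$ of $r$ non-attacking rooks equals $(q-1)^r q^{e_B(c)}$, where $e_B(c)$ is the number of cells of $B$ lying directly north or directly east of some rook of $c$. Granting this, the theorem is immediate: the cells of $B$ split into the $r$ rook cells, the $e_B(c)$ cells that are north or east of a rook, and the remaining cells, which are exactly those counted by $\inv^{\NE}_B(c)$; hence $e_B(c) = |B| - r - \inv^{\NE}_B(c)$ and
\[
\mat_r(B,q) = \sum_{c} (q-1)^r q^{e_B(c)} = (q-1)^r q^{|B|-r}\sum_{c} q^{-\inv^{\NE}_B(c)} = (q-1)^r q^{|B|-r} R^{\NE}_r(B, q^{-1}),
\]
where the sum is over rook placements $c$ of $r$ rooks on $B$.

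The structural input is the Bruhat decomposition, realized through Gaussian elimination. Fixing a reading order—scanning columns left to right and, within each column, taking the lowest available nonzero entry as a pivot after clearing earlier pivot rows—attaches to every rank-$r$ matrix $M$ a well-defined placement $c = c(M)$ of $r$ non-attacking rooks and expresses $M$ via upper-triangular $U_1 \in \GL_m$, $U_2 \in \GL_n$ and the rook placement $c$, as in the characterization recorded just above the theorem. In the resulting normal form the $r$ pivot entries are nonzero (a factor $(q-1)^r$), the cells directly north or directly east of a pivot are free and may be filled arbitrarily from $\F_q$ (a factor $q^{e(c)}$), the cells strictly to the northeast of a pivot are \emph{determined} as products of the free entries, and all remaining cells are forced to $0$. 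On the full rectangle $[m]\times[n]$ this recovers the sanity check $v_r = \sum_c (q-1)^r q^{e(c)}$.

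The place where the NE property enters, and the main obstacle, is in transferring this count to the restricted board $B$; the difficulty is precisely the determined cells. For $M$ to be supported on $B$, any determined cell $(a,b)$ that is forced nonzero must itself lie in $B$, yet a priori it could lie outside $B$, coupling the free entries and destroying the clean product count. The key observation is that a determined northeast cell $(a,b)$ is forced nonzero only when both of its generating cells—the north cell $(a,j)$ and the east cell $(i,b)$ of a common pivot $(i,j)$—lie in $B$ and carry nonzero free values; and the NE property is exactly the closure statement that $(a,j),(i,j),(i,b)\in B$ with $a<i$ and $j<b$ forces $(a,b)\in B$. Thus the NE hypothesis guarantees both that the pivots land in $B$ (via the cited characterization) and that filling the $B$-cells north or east of the pivots freely keeps $M$ supported on $B$, so the block attached to each $c\subseteq B$ has size $(q-1)^r q^{e_B(c)}$. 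The delicate part is making this rigorous for $r > 1$, where several pivots interact and the determined cells are more complicated functions of the free entries; I expect to handle it by a staircase induction that applies the $2\times 2$ closure repeatedly along the pivot configuration, and this is where I would spend the most care.
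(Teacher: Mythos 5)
The paper does not actually prove this theorem; it is quoted from Haglund and from Klein--Lewis--Morales, with only the remark that the Bruhat decomposition ``plays nicely'' with NE boards. Your strategy --- partition rank-$r$ matrices by their pivot placement $c$, show each block has size $(q-1)^r q^{e_B(c)}$ with $e_B(c) = |B| - r - \inv^{\NE}_B(c)$, and sum --- is exactly the strategy of the cited proofs, and your bookkeeping (rook cells, cells directly north/east of a rook, and NE inversions partition $B$) is consistent with how the paper itself uses $\inv^{\NE}_B$ later. So the skeleton is right.

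The gap is the central claim that the block attached to $c$ has size $(q-1)^r q^{e_B(c)}$, which you only actually justify for $r=1$. Your stated mechanism --- ``the cells strictly to the northeast of a pivot are determined as products of the free entries,'' and ``a determined cell is forced nonzero only when both of its generating cells lie in $B$ and carry nonzero free values'' --- is a rank-one statement and is false for $r>1$: already for two pivots at $(3,1)$ and $(2,2)$ in a $3\times 3$ matrix, the determined entry $A_{13}$ is a \emph{sum} of two such products (one through each pivot), so it is not controlled by any single pair of generating cells, and a cell can simultaneously be ``strictly northeast'' of one pivot and ``directly north'' of another. Moreover, even the assertion that the later pivots land in $B$ is not free: the second pivot is a lowest nonzero entry of the \emph{cleared} matrix, and clearing can a priori create nonzero entries off $B$. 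You defer all of this to an unspecified ``staircase induction,'' which is precisely the content of the theorem. The clean way to close it is the recursive formulation: extracting the first pivot $(i,j)$ and performing the clearing row/column operations gives a bijection between rank-$r$ matrices on $B$ with first pivot $(i,j)$ and triples consisting of a nonzero pivot value, arbitrary values on the $B$-cells directly north or east of $(i,j)$, and a rank-$(r-1)$ matrix supported on the restriction of $B$ to the remaining rows and columns; one must check that the NE property (applied once per cleared entry, exactly your $2\times 2$ closure) makes the clearing operations support-preserving in \emph{both} directions, and that the residual board again has the NE property so the induction closes. Without that lemma spelled out, the proof is a correct plan rather than a proof.
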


In particular, we have in this case that $M_r(B, q) = q^{|B|-r} \cdot R_r^{\NE}(B, q^{-1})$ is a polynomial in $q$ with nonnegative integer coefficients.

\begin{corollary} \label{cor:polyNEshapes}
If $B\subseteq[m]\times [n]$ has the NE property then
$M_r(\overline{B}, q) \in \mathbb{Z}[q]$.
\end{corollary}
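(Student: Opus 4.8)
The plan is to combine the polynomiality furnished by the NE hypothesis with the complement identity, and then to promote the resulting rational expression to an element of $\mathbb{Z}[q]$ by a growth (degree-comparison) argument. Throughout I use only that $B$ has the NE property; the complement $\overline B$ need not.

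First I would assemble the inputs that make the right-hand side of the complement identity polynomial. Since $B$ has the NE property, Theorem~\ref{thm:polyqNE} gives $M_i(B,q) \in \mathbb{N}[q]$ for every $i$, so $\mat_i(B,q) = (q-1)^i M_i(B,q) \in \mathbb{Z}[q]$. The $q$-\K polynomials $K_r(i)$ also lie in $\mathbb{Z}[q]$, being signed integer combinations of products of $q$-binomial coefficients with powers of $q$. Hence $N(q) \defeq \sum_{i=0}^m K_r(i)\,\mat_i(B,q) \in \mathbb{Z}[q]$, and Theorem~\ref{thm:MW} together with the definition of $M_r$ gives, at every prime power $q$,
\[
M_r(\overline B,q) = \frac{\mat_r(\overline B, q)}{(q-1)^r} = \frac{N(q)}{q^{|B|}\,(q-1)^r}.
\]

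It remains to show this rational function is actually in $\mathbb{Z}[q]$, and this is the step I expect to be the main obstacle: the complement identity only asserts the formula numerically at prime powers, so a priori the expression could have poles at $q=0$ or $q=1$. I would rule this out as follows. Recall that $\mat_r(C,q)$ is divisible by $(q-1)^r$ as an integer for every board $C$, so $M_r(\overline B, q)$ is an integer at each prime power. The denominator $q^{|B|}(q-1)^r$ is monic of degree $|B|+r$, so dividing in $\mathbb{Z}[q]$ yields $N(q) = q^{|B|}(q-1)^r\, g(q) + s(q)$ with $g,s \in \mathbb{Z}[q]$ and $\deg s < |B|+r$. Then $M_r(\overline B,q) - g(q) = s(q)/\bigl(q^{|B|}(q-1)^r\bigr)$ is an integer at every prime power, while the right-hand side tends to $0$ as $q\to\infty$ because $\deg s$ is strictly smaller than the degree of the denominator. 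Thus $s$ vanishes at all large prime powers, forcing $s\equiv 0$; therefore $M_r(\overline B,q) = g(q) \in \mathbb{Z}[q]$. The argument is clean precisely because the two dangerous factors $q^{|B|}$ and $(q-1)^r$ are coprime and monic, so a single division disposes of both at once.
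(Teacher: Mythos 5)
Your proof is correct and follows essentially the same route as the paper: the NE property gives $\mat_i(B,q)\in\mathbb{Z}[q]$ via Theorem~\ref{thm:polyqNE}, and the complement identity of Theorem~\ref{thm:MW} then transfers polynomiality to $M_r(\overline{B},q)$. The paper's own proof is a one-liner that leaves the division by $q^{|B|}(q-1)^r$ implicit, whereas your monic-division and growth argument supplies that justification explicitly; this is a legitimate filling-in of detail rather than a different approach.
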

\begin{proof}
The coefficients in the complement identity \eqref{eq:MW} are polynomials 
in $q$ with integer coefficients, so $M_r(\overline{B}, q)$ 
is also a polynomial in $q$ with integer coefficients.
\end{proof}

As a special case, we settle a question from \cite[Ques.~5.6]{LLMPSZ}.\footnote{
  When comparing the statement there, note a notational conflict: 
  in \cite{LLMPSZ}, $\mat_q(n,B,r)$ counts matrices with 
  support in the \emph{complement} $\overline{B}$.}
Given two partitions $\lambda$ and $\mu$, the \emph{skew Ferrers board} $S_{\lambda/\mu}$ consists of those elements of the board of shape $\lambda$ that do not belong to the board of shape $\mu$, when the two are aligned together.

\begin{corollary}
Let $S_{\lambda/\mu} \subseteq[m]\times [n]$ be a skew shape and $0\leq r \leq m$.  Then $M_r(\overline{S_{\lambda/\mu}}, q) \in \mathbb{Z}[q]$.
\end{corollary}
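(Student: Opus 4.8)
The plan is to reduce the statement to Corollary~\ref{cor:polyNEshapes} by showing that every skew Ferrers board has the NE property once it is placed in a suitable orientation. The two background facts I would use are, first, that $M_r(\overline{B}, q)$ is unchanged when the rows or columns of $B$ are permuted---this follows from the invariance of $M_i(B,q)$ recorded in the proof of Proposition~\ref{prop:invariance}, together with the observation that complementation within $[m]\times[n]$ commutes with permuting rows and columns---and, second, that by Corollary~\ref{cor:polyNEshapes} any board with the NE property has $M_r(\overline{B}, q) \in \mathbb{Z}[q]$.

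First I would normalize the orientation. Under the ecumenical convention adopted in Section~\ref{sec:notation}, the board $S_{\lambda/\mu}$ may be presented in English or French notation or as a reflection of these; but each relevant reflection (reversing the order of the rows, or reversing the order of the columns) is a permutation of the rows and columns, and hence does not affect $M_r(\overline{S_{\lambda/\mu}}, q)$. I would therefore assume without loss of generality that $S_{\lambda/\mu}$ is drawn in English orientation, so that in matrix coordinates a cell $(i,j)$ lies in $S_{\lambda/\mu}$ exactly when $\mu_i < j \le \lambda_i$, where $\lambda_1 \ge \lambda_2 \ge \cdots$ and $\mu_1 \ge \mu_2 \ge \cdots$ with $\mu_i \le \lambda_i$.

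The heart of the argument is to verify that this board has the NE property, which is a direct check from the inequality description of its cells. I would argue as follows: suppose $i < i'$ and $j < j'$ with $(i,j), (i',j), (i',j') \in S_{\lambda/\mu}$. From $(i,j) \in S_{\lambda/\mu}$ we get $\mu_i < j$, and since $j < j'$ this gives $\mu_i < j'$; from $(i',j') \in S_{\lambda/\mu}$ we get $j' \le \lambda_{i'}$, and since $i < i'$ forces $\lambda_{i'} \le \lambda_i$ we get $j' \le \lambda_i$. Hence $\mu_i < j' \le \lambda_i$, so $(i,j') \in S_{\lambda/\mu}$, which is exactly the NE condition.

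With the NE property established, Corollary~\ref{cor:polyNEshapes} immediately yields $M_r(\overline{S_{\lambda/\mu}}, q) \in \mathbb{Z}[q]$, completing the proof. The only real subtlety---and the step I would be most careful about---is the orientation bookkeeping: one must confirm that the reflections needed to reach English orientation are genuinely row/column permutations (and not, say, a transpose, which would swap the roles of $m$ and $n$), so that the invariance of $M_r(\overline{\cdot}, q)$ applies and the ambient rectangle $[m]\times[n]$ with $m \le n$ is preserved. I expect no further obstacle, since the NE verification is elementary and the polynomiality is then inherited from the integrality of the coefficients $K_r(i)$ in the complement identity \eqref{eq:MW} underlying Corollary~\ref{cor:polyNEshapes}.
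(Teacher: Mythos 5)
Your proof is correct and follows the same route as the paper: the paper's own argument is simply that $S_{\lambda/\mu}$, in a suitable orientation, has the NE property, so the result follows from Corollary~\ref{cor:polyNEshapes}. You have merely filled in the details the paper leaves implicit (the inequality check $\mu_i < j' \le \lambda_{i'} \le \lambda_i$ and the observation that the reorientations are row/column permutations), and those details are right.
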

\begin{proof}
The board $S_{\lambda/\mu}$ (in a suitable orientation) has the
NE property and so the result follows from Corollary~\ref{cor:polyNEshapes}.
\end{proof}

We also get a polynomiality result for $q$-hit numbers.

\begin{corollary}
If $B\subseteq[m]\times [n]$ has the NE property then
$\nHit_i(B,q)$ and $\nHit_i(\overline{B},q)$ are in $\mathbb{Z}[q]$.
\end{corollary}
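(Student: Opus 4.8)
The plan is to deduce this from the polynomiality of the reduced matrix counts together with the defining relation \eqref{definition of hits inside rectangle}. In fact I would first isolate the general principle that \emph{whenever all $M_i(B,q)$ lie in $\mathbb{Z}[q]$, so do all $\nHit_i(B,q)$}, and then feed in the two available polynomiality inputs. For the board $B$ itself, Theorem~\ref{thm:polyqNE} gives $M_i(B,q) = q^{|B|-i}R_i^{\NE}(B,q^{-1}) \in \mathbb{N}[q] \subseteq \mathbb{Z}[q]$, while for the complement $\overline{B}$, Corollary~\ref{cor:polyNEshapes} gives $M_i(\overline{B},q) \in \mathbb{Z}[q]$ (this is exactly where the NE hypothesis enters). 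Thus both $B$ and $\overline{B}$ satisfy the hypothesis of the general principle, and it remains only to prove that principle.

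To prove it, I would work directly with the right-hand side of \eqref{definition of hits inside rectangle}, namely
\[
P(B,q,t) = q^{\binom{m}{2}} \sum_{i=0}^m M_i(B,q)\,\frac{[n-i]!_q}{[n-m]!_q}\,(-1)^i\,(t;q^{-1})_i,
\]
and check that each summand is an honest element of $\mathbb{Z}[q,t]$, after which extracting the coefficient of $t^i$ yields $\nHit_i(B,q)\in\mathbb{Z}[q]$. The factors $M_i(B,q)$ and $(-1)^i$ are manifestly in $\mathbb{Z}[q]$, and the ratio $[n-i]!_q/[n-m]!_q = \prod_{\ell=n-m+1}^{n-i}[\ell]_q$ is a product of $q$-integers, hence lies in $\mathbb{N}[q]$ since $i\le m\le n$. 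The only factor carrying negative powers of $q$ is the $q$-Pochhammer symbol $(t;q^{-1})_i = \prod_{j=0}^{i-1}(1-tq^{-j})$.

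The one genuine point—and the step I expect to be the crux—is handling those negative powers. I would clear them by the identity $q^{\binom{i}{2}}(t;q^{-1})_i = \prod_{j=0}^{i-1}(q^j - t)$, whose right-hand side is visibly in $\mathbb{Z}[q,t]$; the $i$-th summand then carries an overall power $q^{\binom{m}{2}-\binom{i}{2}}$, and since $i\le m$ we have $\binom{i}{2}\le\binom{m}{2}$, so this exponent is nonnegative. Consequently every summand, and hence $P(B,q,t)$ itself, lies in $\mathbb{Z}[q,t]$, and comparing coefficients of $t^i$ completes the argument for both $B$ and $\overline{B}$. Nothing beyond the absorption of $q^{\binom{i}{2}}$ into the Pochhammer symbol and the inequality $\binom{i}{2}\le\binom{m}{2}$ is required, so once this observation is in place the remainder is routine bookkeeping.
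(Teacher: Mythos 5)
Your proof is correct. It differs from the paper's proof only in how the pieces are assembled: the paper cites Corollary~\ref{cor:polyNEshapes}, Proposition~\ref{prop:hitasMs}, and the reciprocity theorem (Theorem~\ref{thm:ourhitreciprocity}) to pass between $B$ and $\overline{B}$, whereas you dispense with reciprocity entirely by isolating the general principle that $M_i(B,q)\in\mathbb{Z}[q]$ for all $i$ forces $\nHit_i(B,q)\in\mathbb{Z}[q]$ for all $i$, and then applying it separately to $B$ (via Theorem~\ref{thm:polyqNE}) and to $\overline{B}$ (via Corollary~\ref{cor:polyNEshapes}). Your verification of that principle directly from \eqref{definition of hits inside rectangle} is sound: the identity $q^{\binom{i}{2}}(t;q^{-1})_i=\prod_{j=0}^{i-1}(q^j-t)$ and the inequality $\binom{i}{2}\le\binom{m}{2}$ do clear all negative powers of $q$, and the ratio $[n-i]!_q/[n-m]!_q$ is indeed a product of $q$-integers. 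In effect you are re-proving the relevant half of Proposition~\ref{prop:hitasMs}, where the same exponent bookkeeping appears as the nonnegativity of $\binom{k+1}{2}+\binom{m}{2}-ik=\binom{m-k}{2}+\binom{k}{2}+k(m-i)-\binom{m}{2}+\binom{m}{2}-km+ik-ik$ --- more simply, $\binom{k+1}{2}+\binom{m}{2}-ik\ge\binom{m-k}{2}\ge 0$ for $k\le i\le m$; your Pochhammer absorption is arguably the cleaner way to see it. The trade-off is minor: the paper's route reuses already-proved machinery, while yours is more self-contained and makes explicit a reusable lemma (polynomiality of the $M_i$ implies polynomiality of the $\nHit_i$) that the paper only states in passing in Section~\ref{sec:final}.
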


\begin{proof}
This follows by combining Corollary~\ref{cor:polyNEshapes},
Proposition~\ref{prop:hitasMs}, and Theorem~\ref{thm:ourhitreciprocity}.
\end{proof}

Recall that every permutation $w$ in $\Sym_n$ has an associated diagram 
\[
I_w=\{(i,w_j) \mid i<j, w_i < w_j\} \subseteq [n]\times [n].
\] 
It is easy to see that permutation diagrams have the NE property. Thus by
Theorem~\ref{thm:polyqNE} we have the following result.

\begin{corollary} \label{cor:polycoinv}
For any $n$ and $r$ and any permutation $w$ of size $n$, the
number of $n\times n$ matrices over $\F_q$ of rank $r$ whose support
is in $I_w$ is a polynomial:
\[
\mat_r(I_w, q)/(q-1)^r \in \mathbb{N}[q].
\]
\end{corollary}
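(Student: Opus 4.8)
The plan is to reduce the claim to Theorem~\ref{thm:polyqNE} by verifying that the diagram $I_w$ has the NE property, and then invoke that theorem directly. First I would unpack the NE property for $I_w$: suppose that $i < i'$ in $[n]$ and $j < j'$ in $[n]$, and that the three cells $(i,j)$, $(i',j)$, and $(i',j')$ all belong to $I_w = \{(a, w_b) \mid a < b,\ w_a < w_b\}$. Each such cell records a coinversion, and I would translate membership in $I_w$ into the defining inequalities. Concretely, a cell $(a, c) \in I_w$ precisely when the row index $a$ equals some $w_{b_1}$ (in the convention of the excerpt, row $i$ is the value $w_i$) — but to avoid getting tangled in the indexing footnote, I would fix the convention $I_w = \{(i, w_j) : i < j,\ w_i < w_j\}$ and track exactly which pairs of positions witness each of the three assumed cells.

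The key combinatorial step is to show that the witnessing data for $(i,j)$, $(i',j)$, $(i',j')$ forces a witness for the fourth corner $(i,j')$. Since $(i',j)$ and $(i',j')$ lie in the same row $i'$, they share the position whose value is $i'$; call it $p$, so $w_p = i'$ with $p$ playing the role of the smaller index in both coinversions. The two columns $j < j'$ then correspond to positions $q_1, q_2 > p$ with $w_{q_1} = j$, $w_{q_2} = j'$, and $i' = w_p < j, j'$. Meanwhile $(i,j)$ lying in column $j$ means $w_{q_1} = j$ is also the larger value in a coinversion with some position whose value is $i$. I would assemble these facts to produce the pair of positions witnessing $(i, j') \in I_w$: I expect the position with value $i$ (from the $(i,j)$ witness) paired with the position $q_2$ (value $j'$) to do the job, once I check the order and value inequalities $i < w_{q_2} = j'$ and that the index with value $i$ precedes $q_2$. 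The inequalities $i < i' < j < j'$ and the relative positions of the witnessing indices should close this up.

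The main obstacle I anticipate is bookkeeping: because the diagram convention mixes row indices (which are \emph{values} $w_i$) with column indices (which are again values $w_j$), it is easy to conflate positions and values and to get the direction of an inequality backwards. The cleanest route is probably to reformulate the NE property as a statement purely about coinversions — four positions $a < b$ and $c < d$ with appropriate value comparisons — and verify the implication there, where the combinatorics of ``if two coinversions share an endpoint value then a third is forced'' is transparent; this is essentially the statement that the coinversion set of a permutation is closed under the relevant staircase completion. Once the NE property is established, the rest is immediate: Theorem~\ref{thm:polyqNE} gives $\mat_r(I_w, q) = (q-1)^r q^{|I_w| - r} R_r^{\NE}(I_w, q^{-1})$, and since $R_r^{\NE}(I_w, q^{-1})$ is a sum of powers of $q^{-1}$ with nonnegative integer coefficients (one term per rook placement), multiplying by $q^{|I_w|-r}$ clears the negative exponents, yielding $\mat_r(I_w, q)/(q-1)^r = q^{|I_w|-r} R_r^{\NE}(I_w, q^{-1}) \in \mathbb{N}[q]$.
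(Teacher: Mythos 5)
Your overall route is exactly the paper's: observe that $I_w$ has the NE property and invoke Theorem~\ref{thm:polyqNE} (the paper simply asserts the NE property is ``easy to see''), so the reduction and the final polynomiality step are fine. However, your middle paragraph falls into precisely the position/value confusion you flag as the anticipated obstacle: under the convention $I_w = \{(i, w_j) : i < j,\ w_i < w_j\}$ the \emph{row} index of a cell is a position and the \emph{column} index is a value, so there is no ``position $p$ whose value is $i'$'' with $w_p = i'$, and the chain $i < i' < j < j'$ you invoke compares a position with a value and need not hold. The clean verification is shorter than what you attempt and uses only two of the three hypothesized cells: a cell $(a,c)$ lies in $I_w$ if and only if $a < w^{-1}(c)$ and $w_a < c$; given $(i,j), (i',j'), (i',j) \in I_w$ with $i<i'$ and $j<j'$, the membership of $(i',j')$ gives $i' < w^{-1}(j')$, hence $i < w^{-1}(j')$, and the membership of $(i,j)$ gives $w_i < j < j'$, so $(i,j') \in I_w$. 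With that repaired, your concluding appeal to Theorem~\ref{thm:polyqNE} and the bound $\inv^{\NE}_{I_w}(c) \le |I_w| - r$ gives the statement as in the paper.
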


Part of Conjecture~\ref{rothe conjecture} (originally posed in \cite{KleinLM}) states that $\mat_r(\overline{I_w}, q)$ is a
polynomial in $q$.  Combining
Corollary~\ref{cor:polycoinv} and the complementation formula
\eqref{eq:MW}, we can settle this part of the conjecture.

\begin{corollary} \label{cor:polynomiality-rothe}
For any $n$ and $r$ and any permutation $w$ of size $n$, the
number of $n\times n$ matrices over $\F_q$ of rank $r$ whose support
is in $\overline{I_w}$ is a polynomial:
\[
\mat_r(\overline{I_w}, q)/(q - 1)^r \in \mathbb{Z}[q].
\]
\end{corollary}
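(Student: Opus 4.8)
The plan is to obtain this as the special case $B = I_w$ of Corollary~\ref{cor:polyNEshapes}: since permutation diagrams have the NE property, $\overline{I_w}$ is the complement of an NE-property board, and one may either cite that corollary directly or re-run its proof, which combines Corollary~\ref{cor:polycoinv} with the complement identity \eqref{eq:MW}. I will describe the latter route.

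First I would assemble the two polynomial inputs. Because $I_w$ has the NE property, Theorem~\ref{thm:polyqNE} (recorded as Corollary~\ref{cor:polycoinv}) gives $\mat_i(I_w, q) = (q-1)^i M_i(I_w, q)$ with $M_i(I_w, q) \in \mathbb{N}[q]$; in particular each $\mat_i(I_w, q)$ is an element of $\mathbb{Z}[q]$. Independently, every $q$-Krawtchouk polynomial $K_r(i)$ lies in $\mathbb{Z}[q]$, directly from its definition as a signed sum of products of two $q$-binomial coefficients with an integer power of $q$, since the $q$-binomial coefficients are polynomials with integer coefficients. Substituting both into \eqref{eq:MW} with $B = I_w$,
\[
\mat_r(\overline{I_w}, q) = \frac{1}{q^{|I_w|}} \sum_{i=0}^n K_r(i)\, \mat_i(I_w, q),
\]
exhibits the right-hand side as $q^{-|I_w|}$ times an element of $\mathbb{Z}[q]$, that is, as a Laurent polynomial in $q$ with integer coefficients.

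The remaining and only delicate point---the step I expect to be the main obstacle---is to upgrade this Laurent polynomial to an honest element of $\mathbb{Z}[q]$ and then to divide by $(q-1)^r$. For the first part I would use that $\mat_r(\overline{I_w}, q)$ is a genuine matrix count, hence a nonnegative integer, at every prime power $q$: its polynomial (nonnegative-power) part already takes integer values there, so the sum of the strictly negative-power terms is a fixed rational function $Q(q)/q^{|I_w|}$ with $\deg Q < |I_w|$ that is an integer at every prime power; its absolute value is below $1$ for all large $q$, forcing it to vanish identically, and hence $\mat_r(\overline{I_w}, q) \in \mathbb{Z}[q]$. For the second part, the quotient $M_r(\overline{I_w}, q) = \mat_r(\overline{I_w}, q)/(q-1)^r$ is an integer at every prime power (the matrix count being divisible by $(q-1)^r$), so, working one factor of $q-1$ at a time, vanishing of the relevant value at $q = 1$ yields divisibility in $\mathbb{Z}[q]$ by the monic polynomial $(q-1)^r$; by Gauss's lemma the quotient has integer coefficients, giving $M_r(\overline{I_w}, q) \in \mathbb{Z}[q]$ as desired. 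All of this bookkeeping is exactly what is packaged in Corollary~\ref{cor:polyNEshapes}, so in the final write-up the cleanest presentation is to invoke that corollary for $B = I_w$ after noting that $I_w$ has the NE property.
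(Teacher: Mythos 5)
Your proof is correct and takes essentially the same route as the paper, which obtains this statement as the case $B = I_w$ of Corollary~\ref{cor:polyNEshapes} (equivalently, by combining Corollary~\ref{cor:polycoinv} with the complement identity \eqref{eq:MW}) after observing that permutation diagrams have the NE property. The only difference is that you make explicit the integrality bookkeeping --- clearing the $q^{-|I_w|}$ prefactor via values at large prime powers and dividing by the monic polynomial $(q-1)^r$ --- which the paper compresses into the one-line proof of Corollary~\ref{cor:polyNEshapes}.
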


The positivity of $\mat_r(\overline{I_w}, q)$ is discussed below in Section~\ref{sec:failures of positivity}.

\subsection{Comparison with Garsia--Remmel $q$-hit numbers}

When $B=S_{\lambda}$ is a Ferrers board contained in the square $[n] \times [n]$, Garsia and Remmel defined a $q$-hit number $H^{\GR}_i(S_{\lambda},q)$.  These numbers are certain polynomials in $q$, defined\footnote{
  We use the definition by Haglund \cite[(3)]{Haglund} of
  these $q$-hit numbers as opposed to the original definition
  \cite[(2.1)]{GarsiaRemmel}. 
  The two definitions are equivalent up
  to dividing by $t^n$ and replacing $t$ by $1/t$.}
by the relation
\begin{equation} 
%\sum_{i=0}^n H^{\GR}_i(S_{\lambda},q) t^{n-i} &= \sum_{r=0}^n [r]!_q \cdot
%R_{n-r}^{\NE}(S_{\lambda},q) t^r(1-tq^{r+1})\cdots (1-tq^n) \label{GRversion}\\
\sum_{i=0}^n H^{\GR}_i(S_{\lambda},q) t^i = \sum_{i=0}^n
                                           R_{i}^{\NE}(S_{\lambda},q)
                                           [n-i]!_q \prod_{k=n-i+1}^n (t-q^{k}). \label{HaglundRemmelversion}
\end{equation}
Garsia--Remmel showed that $H^{\GR}_i(S_{\lambda},q)$ is a polynomial
with nonnegative coefficients.

\begin{theorem}[{\cite[Thm.~2.1]{GarsiaRemmel}}] \label{thm:qposHit}
For any Ferrers board $S_{\lambda} \subset [n]\times [n]$ and $i=0,1,\ldots,n$
we have that $H^{\GR}_i(S_{\lambda},q)$ is in $\mathbb{N}[q]$.
\end{theorem}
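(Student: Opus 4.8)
The plan is to realize $H^{\GR}_i(S_{\lambda},q)$ as an honest generating function $\sum_\sigma q^{\mathrm{stat}_B(\sigma)}$ over a set of combinatorial objects weighted by a manifestly nonnegative integer statistic, from which membership in $\mathbb{N}[q]$ is immediate. The natural objects are the full placements of $n$ non-attacking rooks on $[n]\times[n]$ -- equivalently, permutations $\sigma \in \Sym_n$ -- graded by $t^{|\sigma \cap S_\lambda|}$, the number of rooks landing on the board. Thus the goal reduces to producing a statistic $\mathrm{stat}_B$ on $\Sym_n$ such that $\sum_{\sigma} q^{\mathrm{stat}_B(\sigma)}\, t^{|\sigma\cap S_\lambda|}$ equals the right-hand side of \eqref{HaglundRemmelversion}.

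First I would build a signed combinatorial model for that right-hand side. Recall from \eqref{eq:GRqrooknumber} that $R_i^{\NE}(S_{\lambda},q)$ is already a generating function over placements of $i$ non-attacking rooks on $S_\lambda$ weighted by NE-inversions, while the factor $[n-i]!_q$ records, with a nonnegative statistic, the ways of completing such a placement by rooks in the remaining rows and columns. Expanding each factor of $\prod_{k=n-i+1}^n(t-q^{k})$ as a choice between the monomial $t$ and the monomial $-q^{k}$ then turns the whole right-hand side into a signed sum over full placements $\sigma$ carrying an extra decoration: a subset of the completing rooks marked to contribute a factor $-q^{k}$. The number of unmarked completing rooks becomes the exponent of $t$, so after extracting the coefficient of $t^i$ one obtains a signed, $q$-weighted set of decorated placements whose signless-positive generating function we wish to identify with $H^{\GR}_i(S_{\lambda},q)$.

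The heart of the argument is a sign-reversing, weight-preserving involution $\phi$ on this set of decorated placements. The involution should toggle the marking of a single distinguished completing rook -- selected by a fixed rule, say the lexicographically first rook whose marking can be changed without altering the $q$-weight -- using the left-justified column structure of the Ferrers board $S_\lambda$ to guarantee that the associated power of $q$ is unchanged. Every decorated placement carrying at least one mark is then paired with one of opposite sign and equal weight, so all negative contributions cancel; the fixed points are exactly the unmarked full placements, each contributing $+q^{\mathrm{stat}_B(\sigma)}$ for a nonnegative statistic assembled from the NE-inversions on $B$ together with the completion weight. This yields $H^{\GR}_i(S_{\lambda},q) = \sum_{|\sigma\cap S_\lambda| = i} q^{\mathrm{stat}_B(\sigma)} \in \mathbb{N}[q]$.

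The main obstacle is the design and verification of $\phi$: one must choose the toggling rule so that it is a genuine involution, reverses sign, and exactly preserves the $q$-weight, and this is precisely where the Ferrers (column-convex, NE) shape of $S_\lambda$ is indispensable -- for a general board the would-be involution fails to preserve weight, and indeed the hit numbers need not be positive. An alternative that sidesteps the explicit involution is to invoke the $q$-analogue of the Goldman--Joichi--White factorization theorem for Ferrers boards, which factors $\sum_r R_r^{\NE}(S_\lambda,q)\prod_{k}(x-q^{k})$ into linear terms indexed by the column heights; substituting this factorization into \eqref{HaglundRemmelversion} and tracking the resulting monomials gives the same positive expansion, at the cost of first establishing the factorization.
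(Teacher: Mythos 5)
First, a point of reference: the paper does not prove Theorem~\ref{thm:qposHit} at all --- it is quoted from Garsia and Remmel, and Remark~\ref{rmk:statistics} records that their original proof works by recurrences for the generating polynomial $\sum_i H^{\GR}_i(S_\lambda,q)t^i$ that manifestly preserve $q$-positivity, while the statistic realizations $\stat_H$ and $\stat_D$ you are aiming for were only found later by Haglund and by Dworkin. So your route is genuinely different from the original one, and it is known to be viable --- but your write-up does not actually carry it out, and the gap is not a technicality: the sign-reversing involution $\phi$ \emph{is} the proof, and you have only asserted its existence. The rule you propose (``toggle the marking of the lexicographically first rook whose marking can be changed without altering the $q$-weight'') is circular as stated: toggling a choice in $\prod_{k=n-i+1}^n(t-q^k)$ from $t$ to $-q^k$ changes both the $t$-degree and the $q$-weight, so after extracting the coefficient of $t^i$ the two decorated objects you want to pair do not even have the same underlying rook placement --- the cancellation in \eqref{HaglundRemmelversion} happens \emph{across} different values of the rank $i$, not within a fixed $i$. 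Designing a weight-preserving pairing between an $i$-rook configuration with one completion datum and an $(i+1)$-rook configuration with another is exactly the hard combinatorial content of Haglund's and Dworkin's papers, and nothing in your sketch addresses it.

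There is also a bookkeeping inconsistency in your signed model: the product $\prod_{k=n-i+1}^n(t-q^k)$ has $i$ factors, whereas there are $n-i$ completing rooks, so the marks cannot be ``a subset of the completing rooks,'' and ``the number of unmarked completing rooks becomes the exponent of $t$'' cannot be right (the $t$-degree is at most $i$, not $n-i$). Finally, your fallback via the $q$-Goldman--Joichi--White factorization does not rescue the argument: that factorization is Garsia and Remmel's Theorem~1.1 and gives product formulas for the $q$-\emph{rook} numbers, but substituting it into \eqref{HaglundRemmelversion} still leaves an alternating sum in which the positivity of the $q$-hit numbers is not visible; that is precisely why their Theorem~2.1 required a separate recurrence argument. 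To make your proposal a proof you would need either to reproduce one of the explicit statistics of Haglund or Dworkin together with its verification, or to switch to the recurrence strategy the paper attributes to Garsia and Remmel.
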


We show that for the case when $B=S_{\lambda}$, our $q$-hit numbers
are equal to the Garsia--Remmel $q$-hit numbers, up to a power of $q$.

\begin{proposition}
\label{prop:q-hit and gr}
For any Ferrers board $S_{\lambda} \subset [n]\times [n]$ and $i=0,1,\ldots,n$, we have
\[
\nHit_i(S_{\lambda},q) = q^{\binom{n}{2}} H^{\GR}_i(S_{\lambda},q).
\]
\end{proposition}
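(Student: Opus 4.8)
The plan is to match the two generating functions and reduce everything to a single polynomial identity in $t$. By definition, the proposition is equivalent to the generating-function identity
\[
\sum_{i=0}^n \nHit_i(S_\lambda,q)\,t^i \;=\; q^{\binom n2}\sum_{i=0}^n H^{\GR}_i(S_\lambda,q)\,t^i,
\]
that is, to showing that the polynomial $P(S_\lambda,q,t)$ defined by \eqref{definition of hits inside rectangle} (with $m=n$) equals $q^{\binom n2}$ times the right-hand side of \eqref{HaglundRemmelversion}. So it suffices to prove this one identity; the per-$i$ statement then follows by extracting coefficients of $t^i$.

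First I would apply the joint substitution $q\mapsto q^{-1}$, $t\mapsto q^{-n}t$ to the whole Garsia--Remmel equation \eqref{HaglundRemmelversion}. On the rook-number side this turns $R_i^{\NE}(S_\lambda,q)$ into $R_i^{\NE}(S_\lambda,q^{-1})$, which by Theorem~\ref{thm:polyqNE} (applicable since the Ferrers board $S_\lambda$ has the NE property) equals $q^{\,i-|S_\lambda|}M_i(S_\lambda,q)$, returning us to the matrix counts that define our $q$-hit numbers. Simplifying the $q$-factorials and $q$-Pochhammer symbols via $[n-i]!_{q^{-1}} = q^{-\binom{n-i}2}[n-i]!_q$, the rewriting $\prod_{j=0}^{i-1}(t-q^j) = (-1)^i q^{\binom i2}(t;q^{-1})_i$, and the root-rescaling $\prod_{k=n-i+1}^n(q^{-n}t-q^{-k}) = q^{-ni}\prod_{j=0}^{i-1}(t-q^j)$, the $i$-th summand picks up the power $e_i = i-|S_\lambda|-\binom{n-i}2-ni+\binom i2$. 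The crucial point of the computation is that $e_i$ collapses, using $\binom i2-\binom{n-i}2 = i(n-1)-\binom n2$, to the $i$-independent constant $-|S_\lambda|-\binom n2$. Hence all of the rook-number terms reassemble into a single scalar multiple of the defining sum \eqref{definition of hits inside rectangle}, and comparing coefficients of $t^i$ gives the clean relation $\nHit_i(S_\lambda,q) = q^{\,|S_\lambda|+2\binom n2-ni}\,H^{\GR}_i(S_\lambda,q^{-1})$.

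The final step is to convert the $q^{-1}$ back into $q$, and this is where the Ferrers structure must re-enter through a genuinely non-formal input: the palindromicity of the Garsia--Remmel hit numbers, $H^{\GR}_i(S_\lambda,q) = q^{\,|S_\lambda|+\binom n2-ni}\,H^{\GR}_i(S_\lambda,q^{-1})$, which can be quoted from the Garsia--Remmel theory (or re-derived from Haglund's statistic). Substituting this symmetry into the relation above cancels the $i$-dependent powers and leaves exactly $\nHit_i(S_\lambda,q) = q^{\binom n2}H^{\GR}_i(S_\lambda,q)$. I expect the main obstacle to be precisely this $q\leftrightarrow q^{-1}$ asymmetry: because Theorem~\ref{thm:polyqNE} unavoidably introduces $q^{-1}$ into the rook numbers, the desired equality is really a reciprocity rather than a termwise match, so the heart of the argument is the careful power-of-$q$ accounting that forces $e_i$ to be constant, together with the palindromicity input. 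Alternatively, one could try to package the whole reconciliation as a single basic hypergeometric summation of $q$-Chu--Vandermonde type, mirroring the proof of Lemma~\ref{polynomial lemma}.
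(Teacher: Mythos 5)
Your proof is correct and follows essentially the same route as the paper: both arguments use Theorem~\ref{thm:polyqNE} to convert the matrix counts into Garsia--Remmel rook numbers, match the two generating functions under the substitution $q\mapsto q^{-1}$, $t\mapsto q^{-n}t$ to obtain $\nHit_i(S_\lambda,q)=q^{2\binom n2+|\lambda|-ni}H^{\GR}_i(S_\lambda,q^{-1})$, and then finish with the Haglund--Dworkin palindromicity \eqref{eq:GRhitsym}. Your power-of-$q$ bookkeeping (the collapse of $e_i$ to a constant) checks out.
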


\begin{proof}
Since $S_{\lambda}$ has the NE property,  Theorem~\ref{thm:polyqNE} allows us
to replace $M_i(S_{\lambda}, q)$ in \eqref{eq:intro-hit}
(the defining equation for ${P}(S_{\lambda}, q, t)$) 
with $q^{|\lambda|-i} R_i^{\NE}(S_{\lambda},q^{-1})$ to obtain
\begin{align*}
{P}(S_{\lambda}, q, t) & = q^{\binom{n}{2} + |\lambda|}\sum_{i=0}^n
R^{\NE}_{i}(S_{\lambda},q^{-1})q^{-i} [n-i]!_q \prod_{k=0}^{i-1} (tq^{-k}-1) \\
& = q^{\binom{n}{2} + |\lambda|}\sum_{i=0}^n
R^{\NE}_{i}(S_{\lambda},q^{-1}) q^{-i} \left( q^{\binom{n - i}{2}} \cdot [n-i]!_{q^{-1}} \right)\left(q^{ni - \binom{i}{2}} \cdot \prod_{k=0}^{i-1} (tq^{-n}-q^{-(n - k)})\right).
\end{align*}
By comparing the right side of this equation with that of
\eqref{HaglundRemmelversion} and rearranging powers of $q$, we see that
\[
P(S_{\lambda},q,t) = q^{2\binom{n}{2} + |\lambda|} \sum_{i=0}^n
\Hit_i^{\GR}(S_{\lambda},q^{-1}) q^{-ni} t^i.                                                                                
\]
Equating the coefficients of $t$ on both sides yields
\begin{equation} \label{eq:relhits1}
\nHit_{i}(S_{\lambda},q) = q^{2\binom{n}{2} + |\lambda| -in}H^{\GR}_i(S_{\lambda},q^{-1}).
\end{equation}
Haglund \cite[\S 5]{Haglund} and Dworkin \cite[Thm.~9.22]{Dworkin}
independently showed that the Garsia--Remmel $q$-hit numbers are symmetric, i.e., 
\begin{equation} \label{eq:GRhitsym}
q^{\binom{n}{2}+|\lambda|-in}H_{i}^{\GR}(S_{\lambda},q^{-1}) = H^{\GR}_i(S_{\lambda},q).
\end{equation}
Combining \eqref{eq:GRhitsym} and \eqref{eq:relhits1} gives the
desired expression.
\end{proof}

\begin{remark}
\label{rmk:statistics}
Garsia and Remmel proved Theorem~\ref{thm:qposHit} using recurrences for
the generating polynomial of their $q$-hit numbers (their analogue of
our ${P}(B, q, t)$) that preserve $q$-positivity.  
Later, Haglund and Dworkin gave (different) statistics $\stat_H$ and $\stat_{D}$ on permutations such that 
\[
H^{\GR}_i(S_{\lambda},q) = \sum_{\sigma} q^{\stat_H(\sigma)} =
\sum_{\sigma} q^{\stat_D(\sigma)},
\]
where the sum is over permutations $\sigma\in \Sym_n$ with $|\sigma \cap
S_{\lambda}|=i$. For the question of giving a combinatorial
interpretation to the numbers $\nHit_i(B,q)$, see Section~\ref{sec:hit-combinatorial-interpretation}.
\end{remark}

\begin{remark}
\label{rmk:rearrangement}
Dworkin also showed that his statistic $\stat_D(\cdot)$  is invariant
under permuting the columns of the Ferrers boards. In contrast, by
Proposition~\ref{prop:invariance}, for \emph{any} board $B\subseteq[m]
\times [n]$ the $q$-hit numbers $\nHit_i(B,q)$ 
are invariant under permuting rows
and columns of the board. 
\end{remark}

\subsection{A $q$-analogue of the probl\`eme des m\'enages} \label{sec:menage-examples}

In Example~\ref{ex:qderangements}, we gave a $q$-analogue of the
problem of derangements. Two other classical combinatorial
problems involve the boards
\[
B \defeq \{(1,1),\ldots,(n,n),
(1,2),(2,3),\ldots,(n-1,n)\} 
 \quad \text{and} \quad    B' \defeq B \cup \{(n,1)\},
\]
illustrated in Figure~\ref{fig:meange4by4board}.

\begin{figure}
\begin{center}
 \includegraphics{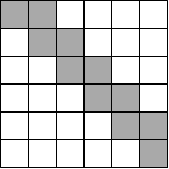}  \qquad \qquad \qquad   \includegraphics{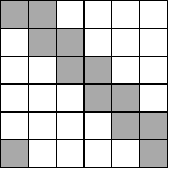}
\end{center}
\caption{The bidiagonal board $B$ and the \emph{m\'enage} board $B'$ in the case $n = 6$.}
\label{fig:meange4by4board}
\end{figure}
The rook numbers 
\[
r_i(B) = \binom{2n - i}{i} 
\qquad \textrm{and} \qquad 
r_i(B') = \frac{2n}{2n - i} \binom{2n - i}{i}
\] 
respectively count the number of ways of choosing $i$ points, no two consecutive, from 
a linear collection of $2n - 1$ points and a cyclic collection of $2n$ points. 
The hit numbers $h_0(B)$ and $h_0(B')$ respectively count permutations $w$ in
$\Sym_n$ such that $w_i \neq i, i+1$ and 
$w_i \not\equiv i,i+1 \pmod{n}$. These numbers have the formulas
\begin{equation} \label{eq:menage}
h_0(B)  = \sum_{i=0}^n (-1)^i \binom{2n-i}{i} (n-i)! 
\qquad \textrm{and} \qquad
h_0(B') = \sum_{i=0}^n (-1)^i\frac{2n}{2n-i} \binom{2n-i}{i} (n-i)!. 
\end{equation}
The rook theory of $B'$ is the famous {\em probl\`eme des m\'enages} 
\cite[Ex.~2.3.3]{EC1}.  

According to Haglund (private communication), he and Rota considered 
the problem of finding a $q$-analogue of the zero hit number $h_0(B')$.  
Here, we give formulas for both $H_0(B,q) = M_n(\overline{B},q)$ and
$H_0(B',q) = M_n(\overline{B'},q)$, $q$-analogues of $h_0(B)$ and
$h_0(B')$, respectively.  We begin by computing the $q$-rook numbers of $B$ and $B'$.

\begin{lemma}
\label{lem:rook numbers reduced menage complement}
The number of $n\times n$ matrices of rank $i$ with support on $B$ is
\[
\mat_i(B,q) = (q-1)^i \left(\binom{2n-1-i}{i-1} q^{i-1} +
  \binom{2n-1-i}{i} q^{i} \right).
\]
\end{lemma}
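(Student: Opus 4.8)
The plan is to compute $\mat_i(B, q)$ directly by classifying the full-rank configurations that arise from rook placements on the bidiagonal board $B$. Since $B$ is the diagram consisting of the main diagonal and the superdiagonal, every rank-$i$ matrix with support on $B$ is obtained by choosing a rook placement $c$ of $i$ non-attacking rooks on $B$ (which fixes the positions of the nonzero entries that contribute to the rank) and then filling in entries consistently. The factor $(q-1)^i$ records the $q-1$ nonzero choices for each of the $i$ pivot entries, which matches the reduced matrix count $M_i(B,q) = \mat_i(B,q)/(q-1)^i$. So the real content is to show that $M_i(B,q) = \binom{2n-1-i}{i-1} q^{i-1} + \binom{2n-1-i}{i} q^{i}$.

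\textbf{First I would} invoke the NE property. The board $B$ has the NE property (it is a permutation diagram, being the diagram of the appropriate permutation, or one checks the NE condition directly on the two diagonals), so Theorem~\ref{thm:polyqNE} applies and gives $M_i(B,q) = q^{|B|-i} R_i^{\NE}(B, q^{-1})$, where $|B| = 2n-1$. Thus it suffices to compute the Garsia--Remmel $q$-rook number $R_i^{\NE}(B,q) = \sum_c q^{\inv_B^{\NE}(c)}$, summing over placements $c$ of $i$ non-attacking rooks on $B$, and then substitute $q \mapsto q^{-1}$ and multiply by $q^{2n-1-i}$.

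\textbf{The key combinatorial step} is to analyze the NE-inversion statistic on the bidiagonal board. A placement of $i$ non-attacking rooks on $B$ corresponds (at $q=1$) to choosing $i$ non-consecutive points among the $2n-1$ cells arranged linearly, recovering $r_i(B) = \binom{2n-i}{i}$. I would split these placements into two classes according to whether the rook in the last column (or some other distinguished boundary cell) sits on the main diagonal or the superdiagonal; this is what produces the two binomial terms $\binom{2n-1-i}{i-1}$ and $\binom{2n-1-i}{i}$. For each class I must compute the distribution of $\inv_B^{\NE}$. Because $B$ consists of two diagonals, the cells not north or east of any given rook have a very regular structure, and I expect the NE-inversion number of a placement to depend in a controlled linear way on the positions of the rooks, so that summing $q^{\inv_B^{\NE}(c)}$ over each class collapses to a single power of $q$ times a binomial count rather than a full polynomial. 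The bookkeeping here---tracking exactly which cells of $B$ fail to lie north or east of a rook as a function of the rook positions---is where I expect to spend most of the effort.

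\textbf{The main obstacle} will be verifying that the $q$-weight in each class is exactly $q^{i-1}$ or $q^{i}$ (after the substitution $q \mapsto q^{-1}$ and the overall power $q^{|B|-i}$), i.e., that the NE-inversion statistic is constant across all placements in a given class up to the global normalization. If instead the statistic genuinely varies, the sum would produce a nontrivial $q$-polynomial factor rather than a single monomial, contradicting the clean two-term answer; so I would need to argue that the two diagonals are ``thin'' enough that moving a rook along the board trades NE-inversions one-for-one against the normalizing powers of $q$. An alternative route that sidesteps the statistic entirely would be an inductive or deletion-contraction argument directly on $\mat_i(B,q)$: condition on the entries in the first row (or first column), which has at most two cells of $B$, and relate the count to the same problem on a board of size $n-1$, matching against the Pascal-type recurrence satisfied by $\binom{2n-1-i}{i-1}q^{i-1} + \binom{2n-1-i}{i}q^{i}$. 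I would keep this recursive approach in reserve as the more robust fallback, since it avoids the delicate statistic computation at the cost of a slightly longer induction.
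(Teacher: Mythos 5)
Your overall strategy is the paper's: reduce to the Garsia--Remmel $q$-rook numbers via Theorem~\ref{thm:polyqNE}, split the $\binom{2n-i}{i}$ placements into two classes according to occupancy of a distinguished boundary cell, and check that $\inv^{\NE}$ is constant on each class. The statistic does collapse exactly as you hope: every rook on the (suitably oriented) bidiagonal board kills two potential NE inversions except a rook on the corner cell, which kills only one, so the two classes carry weights $q^{2n-2i}$ and $q^{2n-2i-1}$ and the two-term formula drops out after the substitution $q\mapsto q^{-1}$ and multiplication by $q^{|B|-i}$.

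However, there is a genuine error in your first step: the board $B$ does \emph{not} have the NE property, and it is not a permutation diagram. For $n=3$ the cells $(1,2)$, $(2,2)$, $(2,3)$ all lie in $B$ but $(1,3)$ does not, violating the defining condition; the direct check you propose would therefore fail, and Theorem~\ref{thm:polyqNE} cannot be applied to $B$ as written. The missing idea is to first replace $B$ by a row-rearranged copy --- the paper reflects through a horizontal axis to get $B^{\star}=\{(n,1),(n-1,2),\ldots,(1,n),(n,2),\ldots,(2,n)\}$, which satisfies the NE property vacuously (each column of $B^{\star}$ has its two cells in consecutive rows, and the lower of those rows has no cell further east, so the hypothesis of the NE condition is never met). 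Since $\mat_i$ is invariant under permuting rows and columns, $\mat_i(B,q)=\mat_i(B^{\star},q)$ and the rest of your argument goes through on $B^{\star}$ with distinguished cell $(1,n)$. Your class split should also be phrased as ``does the placement contain the distinguished endpoint cell or not'' rather than ``where does the rook in the last column sit,'' since a placement need not use the last column at all; with that adjustment the counts $\binom{2n-1-i}{i-1}$ and $\binom{2n-1-i}{i}$ are exactly the linear non-consecutive selections with and without a fixed endpoint. Your fallback deletion-contraction on the first row is viable but unnecessary here; the paper reserves that style of recursion for the m\'enage board $B'$, which genuinely admits no NE rearrangement.
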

\begin{proof}
Let $B^{\star}$ be the reflection of $B$ through a horizontal axis, that is, $B^{\star} = \{(n,1), (n-1, 2), \ldots, \\ (1,n),(n,2),(n-1,3),\ldots,(2,n)\}$.  By inspection, $B^{\star}$ has the NE property (in a vacuous way).  Since $q$-rook numbers are invariant under permutations of the board, we have $\mat_i(B, q) = \mat_i(B^{\star}, q)$.  Since $B^{\star}$ has the NE property, we have by Theorem~\ref{thm:polyqNE} that $\mat_i(B^{\star}, q) = (q - 1)^i q^{2n - 1 - i} R^{\NE}_i(B^{\star}, q^{-1})$.  Thus, it suffices to compute the Garsia--Remmel $q$-rook number of $B^\star$.

Of the $\binom{2n - i}{i}$ placements of $i$ rooks on $B^\star$, exactly $\binom{2n - 1 - i}{i - 1}$ include a rook in position $(1, n)$, while the remaining $\binom{2n - 1 - i}{i}$ leave this cell empty.  By the definition of NE inversion given at the beginning of Section~\ref{sec:NE property}, each rook placed on $B^\star$ kills two potential NE inversions, except if the rook is placed on $(1, n)$, in which case it kills only one.  Thus, a rook placement of $i$ rooks on $B^\star$ has $2n - 2i$ NE inversions if it includes $(1, n)$, and $2n - 2i - 1$ NE inversions if not.  Combining these statements with the preceding paragraph gives the desired result.
\end{proof}

Unlike $B$, the board $B'$ does not have a rearrangement with the NE property for $n \geq 3$.  Thus, we use a different technique to compute the $q$-rook numbers for this board.

\begin{lemma}
\label{lem:rook numbers menage complement}
The number of $n\times n$ matrices of rank $i$ with support on $B'$ is
\[
\mat_i(B', q) = (q-1)^i \left( q^i \frac{2n}{2n-i} \binom{2n-i}{i} + G_{i, n}(q)\right),
\]
where $G_{n, n}(q) \defeq -q(q - 1)^{n - 1}$, $G_{n - 1, n}(q) \defeq (q - 1)^n$, and $G_{i, n}(q) \defeq 0$ if $i < n - 1$.
\end{lemma}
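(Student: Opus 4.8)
The board $B'$ differs from the NE board $B$ only by the extra corner cell $(n,1)$, so my plan is to set up a recursion in $n$ by conditioning on the corner entry $c := X_{n,1}$, and then to verify the stated formula by induction. To make the recursion self-referential, write $B^{(k)}$ and ${B'}^{(k)}$ for the bidiagonal and ménage boards of size $k$, so that $B = B^{(n)}$ and $B' = {B'}^{(n)}$. Splitting the rank-$i$ matrices supported on $B'$ according to whether $c = 0$ gives at once
\[
\mat_i({B'}^{(n)}, q) = \mat_i(B^{(n)}, q) + N_i, \qquad N_i := \#\{X \textrm{ on } {B'}^{(n)} : X_{n,1} \neq 0,\ \rk X = i\},
\]
where the first term is known from Lemma~\ref{lem:rook numbers reduced menage complement}.

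The heart of the argument is to evaluate $N_i$. When $c \neq 0$, the only other entry of column $1$ is $a_1 := X_{1,1}$ in row $1$, which I would clear by the row operation $R_1 \mapsto R_1 - (a_1/c) R_n$. Column $1$ then contains the single nonzero entry $c$ in row $n$, so the rank drops by exactly $1$ upon deleting row $n$ and column $1$. The resulting $(n-1) \times (n-1)$ matrix is again cyclic-bidiagonal: its interior diagonal and superdiagonal entries are precisely the untouched variables $b_1, a_2, \dots, a_{n-1}, b_2, \dots, b_{n-1}$, passed through freely, while its new corner entry is $d := -a_1 a_n / c$. After relabeling rows and columns (equivalently, transposing the board), this reduced board is the ménage board ${B'}^{(n-1)}$, and setting $d = 0$ collapses it to the bidiagonal board $B^{(n-1)}$. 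This identification is clean only for $n \geq 3$, when the reduced corner is genuinely distinct from the interior cells; the cases $n \leq 2$ will serve as base cases.

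It then remains to count preimages. Writing $\tilde N_{i-1}(d)$ for the number of choices of interior entries making the reduced matrix have rank $i - 1$ with corner $d$, one has $\tilde N_{i-1}(0) = \mat_{i-1}(B^{(n-1)}, q)$ and $\sum_d \tilde N_{i-1}(d) = \mat_{i-1}({B'}^{(n-1)}, q)$; moreover scaling column $n$ of the reduced matrix shows $\tilde N_{i-1}(d)$ is independent of $d \neq 0$. A short count shows the triple $(a_1, a_n, c \neq 0)$ realizes $d = 0$ in $(2q-1)(q-1)$ ways and each $d \neq 0$ in $(q-1)^2$ ways. Summing and simplifying with $(2q-1) - (q-1) = q$ yields
\[
N_i = q(q-1)\,\mat_{i-1}(B^{(n-1)}, q) + (q-1)^2\,\mat_{i-1}({B'}^{(n-1)}, q),
\]
and hence the recursion $\mat_i({B'}^{(n)}, q) = \mat_i(B^{(n)}, q) + q(q-1)\mat_{i-1}(B^{(n-1)}, q) + (q-1)^2 \mat_{i-1}({B'}^{(n-1)}, q)$ for $n \geq 3$.

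Finally I would prove the formula by induction on $n$, the base case $n = 2$ (where ${B'}^{(2)}$ is the full $2 \times 2$ board) being checked directly against Example~\ref{ex:negpolyhit}(i). Dividing the recursion by $(q-1)^i$ and substituting the closed form of Lemma~\ref{lem:rook numbers reduced menage complement} for the $B^{(n)}$ and $B^{(n-1)}$ terms, the inductive step splits into two routine verifications: that the main term $q^i \frac{2n}{2n-i}\binom{2n-i}{i}$ satisfies the reduced recursion — obtained by comparing coefficients of $q^{i-1}$ and $q^i$ via Pascal's rule and the ménage identity $\frac{2n}{2n-i}\binom{2n-i}{i} = \binom{2n-i}{i} + \binom{2n-1-i}{i-1}$ — and that the correction terms satisfy $G_{i,n}(q) = (q-1)\,G_{i-1,n-1}(q)$, which is immediate from their definition. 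The main obstacle is the reduction step together with its preimage bookkeeping: in particular, verifying that the reduced object is exactly the smaller ménage board and correctly handling the low-$n$ degeneracies. Once the recursion is in hand, the remaining verification is elementary.
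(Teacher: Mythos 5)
Your proposal is correct and follows essentially the same route as the paper: the paper also conditions on the corner entry $(n,1)$, clears the $(1,1)$-entry by Gaussian elimination to reduce to the transpose of the size-$(n-1)$ boards, arrives at the identical recursion $\mat_i(B'_n,q) = \mat_i(B_n,q) + q(q-1)\mat_{i-1}(B_{n-1},q) + (q-1)^2\mat_{i-1}(B'_{n-1},q)$, and concludes by induction from the base case $n=2$. The only cosmetic difference is that the paper organizes the nonzero-corner case by further conditioning on whether $(n,n)=0$, whereas you count fibers over the induced corner value $d$; both yield the same coefficients.
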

\begin{proof}
Denote $B_n \defeq B$ and $B'_n \defeq B'$ to make the dependence on the size $n$ explicit.  We proceed by induction.  When $n = 2$, we have that $B'_2 = [2] \times [2]$ is the entire square, and it is easy to check that the $q$-rook numbers in this case are $1 = \frac{4}{4 - 0} \binom{4}{0}$, $(q - 1)(q + 1)^2 = (q - 1)q \cdot \frac{4}{3} \binom{3}{1} + (q - 1)^3$, and $(q - 1)^2 q(q + 1) = (q - 1)^2q^2 \cdot \frac{4}{2} \binom{2}{2} - q(q - 1)^3$.  

If $n > 2$, we use recurrences from \cite[\S 3.2.3]{KleinLM}; for completeness, we sketch the argument here.  The set of matrices of rank $i$ with support on $B'_n$ may be written as a disjoint union of three pieces: those with $(n, 1)$ entry equal to $0$, those, with $(n, 1)$ entry nonzero but $(n, n)$ entry equal to $0$, and those with both $(n, 1)$ and $(n, n)$ entry nonzero.  The first of these subsets is the set of matrices of rank $i$ with support on $B_n$ and so has size $\mat_i(B_n, q)$.  By using Gaussian elimination to kill the $(1, 1)$-entry, we have that a matrix with the correct support belongs to the second subset if and only if its $[n - 1] \times [2, n]$-submatrix is of rank $i - 1$.  In this case, the submatrix is the transpose of a matrix with support on $B_{n - 1}$, and so the second subset has size $(q - 1)q \cdot \mat_{i - 1}(B_{n - 1}, q)$.  Finally, a matrix with the correct support belongs to the third subset if and only if, after using Gaussian elimination to kill the $(1, 1)$-entry, the $[n - 1] \times [2, n]$-submatrix is of rank $i - 1$ with support on the transpose of $B'_{n - 1}$.  Thus, the third subset has size $(q - 1)^2 \cdot \mat_{i - 1}(B'_{n - 1}, q)$.  Using Lemma~\ref{lem:rook numbers reduced menage complement} and the inductive hypothesis, it follows that
\begin{align*}
\frac{\mat_i(B'_n, q)}{(q - 1)^i} & = \mat_i(B_n, q) + (q - 1)q \cdot \mat_{i - 1}(B_{n - 1}, q) + (q - 1)^2 \cdot \mat_{i - 1}(B'_{n - 1}, q) \\
%& = \binom{2n-1-i}{i-1} q^{i-1} + \binom{2n-1-i}{i} q^{i}
%  + \binom{2n-2-i}{i-2} q^{i-1} + \binom{2n-2-i}{i - 1} q^{i}
%  + (q - 1)q^{i - 1} \frac{2n - 2}{2n - 1 - i} \binom{2n - 1 - i}{i - 1} + (q - 1)G_{i - 1, n - 1}(q) \\
%& = q^{i - 1} \left(\binom{2n-1-i}{i-1} + \binom{2n-2-i}{i-2} - \frac{2n - 2}{2n - 1 - i} \binom{2n - 1 - i}{i - 1} \right) \\
%& \qquad {} + q^i \left(\binom{2n-1-i}{i} + \binom{2n-2-i}{i - 1} + \frac{2n - 2}{2n - 1 - i} \binom{2n - 1 - i}{i - 1}\right) + G_{i, n}(q) \\
& = q^i \frac{2n}{2n-i} \binom{2n-i}{i} + G_{i, n}(q),
\end{align*}
as desired.
\end{proof}

With these computations in hand, it is straightforward to give a $q$-analogue of the \emph{m\'{e}nage} problem.

\begin{theorem}[$q$-analogue of \emph{m\'{e}nages}] \label{thm:qmenage}
The number of invertible $n\times n$
matrices over the finite field with $q$ elements with zeros on the main and upper diagonal is
\[
\mat_n(\overline{B}, q)  =  (q-1)^n q^{\binom{n}{2} - 2n} \sum_{i=0}^n
(-1)^i q^i\left(
  q\binom{2n-1-i}{i}+\binom{2n-1-i}{i-1} \right) [n-i]!_q.
%(q-1)^n q^{\binom{n}{2} - 2n} \sum_{i=0}^n
%(-1)^i \frac{1}{2n-i}
%\binom{2n-i}{i} q^{i}(2(n-i)q + i) [n-i]!_q.
%= (q-1)^n q^{\binom{n}{2} - 2n+2} \sum_{j=0}^n  (-1)^{n-j} \frac{1}{n+j-2}
%\binom{n+j-2}{n-j} q^{n-j}( q(2j-2) + n-j) [j]!_q.
\]
For $n \geq 2$, the number of invertible $n\times n$ matrices over the finite field with $q$ elements with zeros on the
main and upper diagonal and on the entry $(n,1)$ is
\[
\mat_n(\overline{B'}, q)  =  (q-1)^n q^{\binom{n}{2} - 2n}
\left( (-1)^{n-1} (q-1)^{n-1}(2q-1) + \sum_{i=0}^{n} (-1)^i q^i \frac{2n}{2n-i} \binom{2n-i}{i}[n-i]!_q \right).
\]
\end{theorem}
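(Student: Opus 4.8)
The plan is to feed the explicit reduced rook numbers computed in Lemmas~\ref{lem:rook numbers reduced menage complement} and~\ref{lem:rook numbers menage complement} into the inclusion-exclusion formula of Corollary~\ref{cor:H_0 as inclusion-exclusion}, specialized to the square case $m = n$. That corollary then reads
\[
q^{|B|} M_n(\overline{B}, q) = q^{\binom{n}{2}} \sum_{i=0}^n (-1)^i [n-i]!_q \, M_i(B, q),
\]
and since $\mat_n(\overline{B}, q) = (q-1)^n M_n(\overline{B}, q)$, this rearranges to
\[
\mat_n(\overline{B}, q) = (q-1)^n q^{\binom{n}{2} - |B|} \sum_{i=0}^n (-1)^i [n-i]!_q \, M_i(B, q),
\]
and likewise for $B'$ in place of $B$. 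So in each case it remains only to substitute the corresponding reduced rook numbers and simplify.

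For the bidiagonal board $B$ we have $|B| = 2n - 1$, and Lemma~\ref{lem:rook numbers reduced menage complement} gives $M_i(B, q) = q^{i-1}\binom{2n-1-i}{i-1} + q^i \binom{2n-1-i}{i}$, which I would write as $q^{i-1}(\binom{2n-1-i}{i-1} + q\binom{2n-1-i}{i})$. Pulling the factor $q^{-1}$ out of the sum and into the prefactor turns the exponent $\binom{n}{2} - (2n-1)$ into the claimed $\binom{n}{2} - 2n$, and the two binomial terms collect precisely into $q\binom{2n-1-i}{i} + \binom{2n-1-i}{i-1}$. This yields the first displayed formula directly.

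For the \emph{m\'enage} board $B'$ we have $|B'| = 2n$, and Lemma~\ref{lem:rook numbers menage complement} gives $M_i(B', q) = q^i \tfrac{2n}{2n-i}\binom{2n-i}{i} + G_{i,n}(q)$. Splitting the inclusion-exclusion sum accordingly, the $\tfrac{2n}{2n-i}\binom{2n-i}{i}$ part reproduces the main sum in the statement. The only remaining work is the finite correction coming from the $G_{i,n}$, which vanishes unless $i \in \{n-1, n\}$; since $[1]!_q = [0]!_q = 1$, it equals
\[
(-1)^{n-1} G_{n-1,n}(q) + (-1)^n G_{n,n}(q) = (-1)^{n-1}(q-1)^n + (-1)^{n-1} q (q-1)^{n-1} = (-1)^{n-1}(q-1)^{n-1}(2q-1),
\]
which is exactly the extra term in the displayed formula for $\mat_n(\overline{B'}, q)$.

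There is no genuine obstacle beyond bookkeeping: once the two lemmas are in hand, the theorem is an immediate substitution into Corollary~\ref{cor:H_0 as inclusion-exclusion}, followed by the routine rearrangement of powers of $q$ and the two-term evaluation of the $G_{i,n}$ correction above. The real content lives entirely in Lemmas~\ref{lem:rook numbers reduced menage complement} and~\ref{lem:rook numbers menage complement}; in particular, Lemma~\ref{lem:rook numbers menage complement}, whose inductive Gaussian-elimination argument handles the wrap-around cell $(n,1)$, is where the cyclic structure underlying the \emph{m\'enage} problem is actually extracted.
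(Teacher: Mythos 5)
Your proposal is correct and follows essentially the same route as the paper: the paper substitutes the two lemmas directly into the full-rank complement formula \eqref{eq:fullrankrect} with $m=n$, which after writing $(q;q)_{n-i}=(-1)^{n-i}(q-1)^{n-i}[n-i]!_q$ is exactly the identity you obtain from Corollary~\ref{cor:H_0 as inclusion-exclusion}. Your bookkeeping (the shift from $q^{\binom{n}{2}-(2n-1)}$ to $q^{\binom{n}{2}-2n}$ for $B$, and the two-term evaluation of the $G_{i,n}$ correction giving $(-1)^{n-1}(q-1)^{n-1}(2q-1)$ for $B'$) checks out.
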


\begin{proof}
Applying \eqref{eq:fullrankrect} to Lemmas~\ref{lem:rook numbers reduced menage complement} and~\ref{lem:rook numbers menage complement} in the case $m=n$ and rearranging powers of $q$ and $q - 1$ gives the result.
\end{proof}

The preceding result should be compared with \eqref{eq:menage}.
Observe that in the last formula, the anomalous term $(-1)^{n - 1}(q - 1)^{n - 1}(2q - 1)$ vanishes modulo $q - 1$, in agreement with \eqref{eq:Mq-analogue} and Proposition~\ref{prop:qhitqanalogue}.  As with the $q$-analogue of derangements (Example~\ref{ex:qderangements}), these polynomials in general do not have positive coefficients

\section{Deletion-contraction for complements of boards 
          with the NE  property}
\label{sec:deletion-contraction}

In this section, we give deletion-contraction recurrence relations to compute the matrix count $M_r(B,q)$ and $q$-hit polynomial ${P}(B,q,t)$ when the board $B$ is the complement of a shape with the NE property.  Given a board $B$, say that an element $(i, j) \in B$ is a \emph{SW corner} if there is no other element $(i', j') \in B$ such that $i' \geq i$ and $j' \leq j$.

\subsection{General relations}
Given a board $B \subseteq [m] \times [n]$ with the NE property, let $\square$ be a SW corner of $B$. Denote by $B\setminus \square$ the board obtained by deleting $\square$
from $B$, and denote by $B/\square$ the board obtained by deleting 
the entire row and column of $\square$.  For purposes of taking complements, we think of $B/\square$ as living inside the smaller rectangle $[m - 1] \times [n - 1]$.  The following result of Dworkin gives a deletion-contraction relation for the Garsia--Remmel $q$-rook numbers.

\begin{proposition}[{\cite[Thm.~6.10]{Dworkin}}] \label{prop:delcon}
For any board $B\subseteq[m]\times [n]$ and SW corner $\square$ of $B$,  
\[
R_r^{\NE}(B,q) = q \cdot R_r^{\NE}(B\setminus \square, q) + R^{\NE}_{r-1}(B/\square,q).
\]
\end{proposition}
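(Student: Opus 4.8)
The statement to prove is the Dworkin deletion-contraction relation for Garsia--Remmel $q$-rook numbers:
\[
R_r^{\NE}(B,q) = q \cdot R_r^{\NE}(B\setminus \square, q) + R^{\NE}_{r-1}(B/\square,q),
\]
where $\square$ is a SW corner of the board $B \subseteq [m]\times[n]$.

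The plan is to partition the set of rook placements appearing in the definition \eqref{eq:GRqrooknumber} of $R_r^{\NE}(B,q)$ according to how they interact with the corner cell $\square$, and then to track carefully how the NE inversion statistic $\inv^{\NE}_B$ transforms under this partition. Writing $\square = (i_0, j_0)$, every placement $c$ of $r$ non-attacking rooks on $B$ falls into exactly one of two classes: those $c$ that place no rook on $\square$, and those $c$ that do place a rook on $\square$. So first I would split the defining sum as $R_r^{\NE}(B,q) = \sum_{c : \square \notin c} q^{\inv^{\NE}_B(c)} + \sum_{c : \square \in c} q^{\inv^{\NE}_B(c)}$ and handle the two pieces separately.

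For the first class (no rook on $\square$), the placements $c$ are exactly the rook placements of $r$ rooks on $B \setminus \square$, so the set of placements matches the definition of $R_r^{\NE}(B\setminus\square,q)$; the only thing to check is how the statistic changes. The key point is that because $\square$ is a SW corner, no cell of $B$ lies strictly southwest of it, so $\square$ is never directly north or directly east of any rook placed elsewhere in $c$ (a rook kills cells in its own row to the west becoming north-shadow—here I must be careful with the exact ``north or east'' convention and check that $\square$, being extremal to the southwest, is counted as a NE inversion in $B$ for every such placement). If $\square$ is always an NE inversion of $c$ when no rook sits on it, then $\inv^{\NE}_B(c) = \inv^{\NE}_{B\setminus\square}(c) + 1$, which contributes exactly the factor $q$ and yields the term $q \cdot R_r^{\NE}(B\setminus\square,q)$.

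For the second class (a rook on $\square$), deleting the rook on $\square$ together with its entire row and column gives a placement of $r-1$ non-attacking rooks on $B/\square$, and this correspondence is a bijection onto the placements counted by $R^{\NE}_{r-1}(B/\square,q)$. Here I must verify that the NE inversion statistic is preserved exactly: placing a rook on the SW corner $\square$ kills precisely the cells directly north of $\square$ (in its column) and directly east of $\square$ (in its row), and I claim that after removing the row and column of $\square$, the surviving cells of $B$ that are NE inversions of $c$ are in bijection with the NE inversions of the contracted placement on $B/\square$, with no net change. The main obstacle—and the step demanding the most care—is exactly this statistic bookkeeping: confirming that the ``$+1$'' appears in the first class but not the second, and that no cells are double-counted or lost when passing to $B\setminus\square$ and $B/\square$. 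Getting the north/east killing convention right relative to the SW-corner extremality is the crux; once both statistic identities are established, summing the two pieces gives the claimed recurrence directly.
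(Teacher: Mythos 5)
The paper does not prove this proposition at all---it is quoted directly from Dworkin \cite{Dworkin}---so there is no in-paper argument to compare against; your direct combinatorial proof is the natural one and its outline is correct. Both of the verifications you flag as ``the crux'' do go through, and it is worth recording why. For the first class, the point is that the SW-corner condition is stronger than ``no cell strictly southwest'': it says no \emph{other} cell $(i',j')\in B$ has $i'\geq i$ and $j'\leq j$, which in particular rules out cells of $B$ directly south of $\square$ (same column, below) and directly west of $\square$ (same row, to the left). Hence $\square$ can never be directly north or directly east of a rook of $c$, so it is always an uncancelled cell, giving $\inv^{\NE}_B(c)=\inv^{\NE}_{B\setminus\square}(c)+1$. (Your phrase ``strictly southwest'' would not suffice on its own, since it would leave open a rook directly below $\square$ in the same column; you need the weak inequalities from the definition.) For the second class, the same extremality shows that \emph{every} cell of $B$ in the row or column of $\square$ other than $\square$ itself lies directly north or directly east of $\square$, so all such cells are cancelled by the rook on $\square$; together with the convention---implicit in the paper's definition but confirmed by its use in Lemma~\ref{lem:rook numbers reduced menage complement}, where each rook ``kills'' its own cell---that a cell occupied by a rook is not an NE inversion, the entire deleted row and column contributes $0$ to the statistic. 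For the surviving cells $(a,b)$ with $a\neq i$, $b\neq j$, the relations ``same row as a rook and east of it'' and ``same column as a rook and north of it'' are preserved by the order-preserving relabelling of rows and columns, so the statistic passes unchanged to $B/\square$, and the map $c\mapsto c\setminus\{\square\}$ is a bijection onto $(r-1)$-rook placements on $B/\square$ because no other rook of $c$ occupies row $i$ or column $j$. With these two identities in place, summing the two classes gives the recurrence, exactly as you propose.
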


\begin{corollary} \label{cor:delcontract}
For any board $B\subseteq[m]\times [n]$ with the NE property and any SW corner $\square$ of $B$, 
\begin{equation}
\label{eq:matdelcon} 
M_r(B,q) = M_r(B\setminus \square,q) + q^{|B|-|B/\square|-1} \cdot M_{r-1}(B/\square,q).
\end{equation}
\end{corollary}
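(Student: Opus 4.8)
The plan is to convert the Garsia--Remmel deletion-contraction relation in Proposition~\ref{prop:delcon} into the corresponding statement for the reduced matrix counts $M_r$ by substituting the dictionary provided by Theorem~\ref{thm:polyqNE}. First I would record that $B$, $B \setminus \square$, and $B/\square$ all have the NE property: removing a SW corner from a board with the NE property leaves the NE property intact, and deleting the entire row and column of $\square$ clearly does as well, so Theorem~\ref{thm:polyqNE} applies to all three boards. This lets me write $M_r(B,q) = q^{|B|-r} R_r^{\NE}(B, q^{-1})$, and similarly $M_r(B \setminus \square, q) = q^{|B \setminus \square| - r} R_r^{\NE}(B \setminus \square, q^{-1})$ and $M_{r-1}(B/\square, q) = q^{|B/\square| - (r-1)} R_{r-1}^{\NE}(B/\square, q^{-1})$.

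Next I would take Proposition~\ref{prop:delcon}, which reads $R_r^{\NE}(B,q) = q \cdot R_r^{\NE}(B \setminus \square, q) + R_{r-1}^{\NE}(B/\square, q)$, and substitute $q \mapsto q^{-1}$ throughout, obtaining
\[
R_r^{\NE}(B, q^{-1}) = q^{-1} R_r^{\NE}(B \setminus \square, q^{-1}) + R_{r-1}^{\NE}(B/\square, q^{-1}).
\]
Then I would multiply both sides by $q^{|B| - r}$ so that the left-hand side becomes exactly $M_r(B,q)$. The first term on the right becomes $q^{|B| - r - 1} R_r^{\NE}(B \setminus \square, q^{-1})$, which I would like to identify with $M_r(B \setminus \square, q) = q^{|B \setminus \square| - r} R_r^{\NE}(B \setminus \square, q^{-1})$; this matches precisely because deleting a single cell gives $|B \setminus \square| = |B| - 1$, so the two exponents agree. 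The second term becomes $q^{|B| - r} R_{r-1}^{\NE}(B/\square, q^{-1})$, and comparing with $M_{r-1}(B/\square, q) = q^{|B/\square| - r + 1} R_{r-1}^{\NE}(B/\square, q^{-1})$ shows that I must multiply and divide by $q^{|B/\square| - r + 1}$, producing the prefactor $q^{|B| - r - (|B/\square| - r + 1)} = q^{|B| - |B/\square| - 1}$ in front of $M_{r-1}(B/\square, q)$, exactly as in \eqref{eq:matdelcon}.

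There is essentially no genuine obstacle here; the proof is bookkeeping of the three powers of $q$ arising from the three applications of Theorem~\ref{thm:polyqNE}, together with the trivial count $|B \setminus \square| = |B| - 1$. The one place warranting a moment of care is the verification that $B/\square$ retains the NE property when re-embedded in the smaller rectangle $[m-1] \times [n-1]$, since Theorem~\ref{thm:polyqNE} is stated only for boards with that property; but this is immediate because deleting a full row and a full column cannot create a new obstruction to the NE closure condition. I would therefore present the argument as a short substitution-and-comparison, flagging the exponent arithmetic as the only content and leaving the invariance of the NE property under the two operations as a one-line observation.
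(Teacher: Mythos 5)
Your proposal is correct and takes essentially the same route as the paper, whose proof consists of exactly the two observations you make: that $B\setminus\square$ and $B/\square$ inherit the NE property, and that Theorem~\ref{thm:polyqNE} then lets one rewrite Proposition~\ref{prop:delcon} in terms of the $M_r$. You have simply carried out explicitly the exponent bookkeeping (using $|B\setminus\square| = |B| - 1$) that the paper leaves to the reader, and your arithmetic checks out.
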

\begin{proof}
If $B$ has the NE property and $\square$ is a SW corner then both
$B\setminus \square$ and $B/\square$ have the NE property. So using
Theorem~\ref{thm:polyqNE} we can rewrite the deletion-contraction in
Proposition~\ref{prop:delcon} in terms of $M_r$.
\end{proof}

The next result shows how to pass this deletion-contraction relation through the complement identity \eqref{eq:MW} to produce a recurrence counting matrices on complements of boards with the NE property.  In the proof, it is necessary to consider simultaneously $q$-\K polynomials defined with different parameters $m, n$; thus, for the duration of this section we introduce the notation 
\begin{equation}
\label{eq:K with m, n}
K^{m,n}_r(i)\defeq \sum_{s} (-1)^{r-s} q^{ns+\binom{r-s}{2}} \qbin{m-s}{r-s}{q}\qbin{m-i}{s}{q}
\end{equation}
for the polynomial previously denoted $K_r(i)$.  By applying the $q$-Pascal recurrence
\[
\qbin{k}{\ell}{q} = \qbin{k - 1}{\ell - 1}{q} + q^\ell \cdot \qbin{k - 1}{\ell}{q}
\]
to the first $q$-binomial coefficient appearing in \eqref{eq:K with m, n}, it is not hard to show that
\begin{equation}
\label{eq:K recurrence}
K^{m, n}_r(j + 1) = q^r \cdot K^{m-1, n-1}_r(j) - q^{r-1} \cdot K^{m-1, n-1}_{r-1}(j).
\end{equation}

\begin{corollary} \label{cor:delcontractcomp}
For any board $B\subseteq[m]\times [n]$ with the NE property and any SW corner $\square$ of $B$,
\[
q\cdot M_r(\overline{B}, q) =  M_r(\overline{B \setminus \square}, q) + q^r(q-1)\cdot M_r(\overline{B/\square}, q) - q^{r-1}\cdot M_{r-1}(\overline{B/\square}, q).
\]
\end{corollary}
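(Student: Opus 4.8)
The plan is to combine three ingredients already in hand: the complement identity \eqref{eq:MW}, the deletion-contraction relation for $M_r$ on NE boards (Corollary~\ref{cor:delcontract}), and the $q$-\K recurrence \eqref{eq:K recurrence}. The strategy is to express everything through the complement identity, substitute the deletion-contraction relation for the quantities on the board side, and then use the $q$-\K recurrence to collapse the result. First I would write $M_r(\overline{B}, q) = \mat_r(\overline{B}, q)/(q-1)^r$ and apply Theorem~\ref{thm:MW} to get
\[
q^{|B|} \mat_r(\overline{B}, q) = \sum_{i=0}^m K^{m,n}_r(i)\cdot \mat_i(B, q).
\]
The key observation is that deleting a SW corner $\square$ shifts the ambient rectangle: $B/\square$ lives in $[m-1]\times[n-1]$, so its complement-identity expansion involves the polynomials $K^{m-1,n-1}_r$, whereas $B$ and $B\setminus\square$ both live in $[m]\times[n]$ and use $K^{m,n}_r$. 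This is exactly the setting in which the two-parameter recurrence \eqref{eq:K recurrence} becomes the bridge between the two sizes.

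Concretely, I would take the deletion-contraction relation \eqref{eq:matdelcon}, namely $M_r(B,q) = M_r(B\setminus\square,q) + q^{|B|-|B/\square|-1}M_{r-1}(B/\square,q)$, translate it into a relation among the $\mat_i$ (multiplying through by the appropriate powers of $(q-1)$), and substitute it into the complement-identity expansions for $\overline{B}$, $\overline{B\setminus\square}$, and $\overline{B/\square}$. The goal is to verify the claimed identity
\[
q\cdot M_r(\overline{B}, q) = M_r(\overline{B\setminus\square}, q) + q^r(q-1)M_r(\overline{B/\square}, q) - q^{r-1}M_{r-1}(\overline{B/\square}, q)
\]
by showing both sides have the same expansion in the $\mat_i(B,q)$ (or equivalently in the $\mat_i(B\setminus\square, q)$ together with contributions from the contracted board). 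The natural bookkeeping device is to track the sizes $|B|$, $|B\setminus\square| = |B|-1$, and $|B/\square| = |B|-1-(\text{number of cells of $B$ in the row or column of }\square)$, and to align the powers of $q$ that appear from the $q^{|B|}$ prefactors in the complement identity against those in \eqref{eq:matdelcon}.

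The heart of the computation is the re-indexing step: after substituting, the coefficient of $\mat_i(B/\square, q)$ on one side will be a combination of $K^{m,n}_r(i+1)$-type terms (because the contracted board's cells sit at shifted indices relative to $B$), and I would use \eqref{eq:K recurrence} in the form $K^{m,n}_r(j+1) = q^r K^{m-1,n-1}_r(j) - q^{r-1}K^{m-1,n-1}_{r-1}(j)$ to rewrite these in terms of the size-$(m-1,n-1)$ polynomials that govern $\overline{B/\square}$. The factor $q^r(q-1)$ multiplying $M_r(\overline{B/\square})$ and the factor $-q^{r-1}$ multiplying $M_{r-1}(\overline{B/\square})$ in the claim should emerge directly from the two terms on the right of \eqref{eq:K recurrence}, with the $(q-1)$ coming from reconciling the $M$-versus-$\mat$ normalizations at ranks $r$ and $r-1$.

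The main obstacle I anticipate is the careful alignment of the index shift and the exponent bookkeeping: one must pin down exactly how the cells of $B$ meeting the row and column of $\square$ map onto the summation index $i$ in the complement identity for $B/\square$ versus $B$, and ensure that the prefactor $q^{|B|-|B/\square|-1}$ from \eqref{eq:matdelcon} combines correctly with the $q^{|B|}$ and $q^{|B/\square|}$ prefactors so that the net power of $q$ matches the single factor of $q$ on the left side of the claim. Everything else is the kind of routine $q$-Pochhammer and $q$-binomial manipulation that the preceding proofs in this section already exemplify; once the recurrence \eqref{eq:K recurrence} is applied and the normalizations are reconciled, the terms should group precisely into the three summands of the stated identity.
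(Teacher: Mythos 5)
Your proposal is correct and follows essentially the same route as the paper: expand via the complement identity \eqref{eq:MW}, substitute the deletion-contraction relation \eqref{eq:matdelcon}, re-index the contracted sum and apply the recurrence \eqref{eq:K recurrence}, then fold the two resulting sums back through \eqref{eq:MW} on the $[m-1]\times[n-1]$ rectangle. One small clarification: the shift producing $K^{m,n}_r(i+1)$ arises because contraction lowers the \emph{rank} index by one (the term $\mat_{i-1}(B/\square,q)$ in \eqref{eq:matdelcon}), not because the cells of the contracted board sit at shifted positions.
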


\begin{proof}
Applying the complement identity \eqref{eq:MW} to the matrix count $\mat_r(\overline{B},q)$ gives
\[
\mat_r(\overline{B},q) 
= 
\frac{1}{q^{|B|}} \sum_{i=0}^m K^{m,n}_r(i)\cdot \mat_i({B},q)
% =
% \frac{1}{q^{|B|}} \sum_{i=0}^m (q - 1)^i K^{m,n}_r(i)\cdot M_i({B}, q)
.
\]
Choose a SW corner $\square$ of $B$.  Then applying \eqref{eq:matdelcon} to $\mat_i(B,q) = (q - 1)^i \cdot M_i(B,q)$ gives
\[
\mat_r(\overline{B},q) =
q^{-|B|} \sum_{i=0}^m K^{m,n}_r(i)\cdot \mat_i(B\setminus \square,q)
+
\frac{q-1}{q^{|B/\square| + 1}} \sum_{i=0}^m K^{m,n}_r(i)\cdot \mat_{i-1}({B /\square},q).
\]
The first summand simplifies by \eqref{eq:MW} (using the board 
$B\setminus \square \subseteq [m]\times [n]$) to $\frac{1}{q}\mat_r(\overline{B \setminus \square},q)$. 
For the second summand, we have by \eqref{eq:K recurrence} that
\begin{align*}
 \sum_{i=0}^m K^{m,n}_r(i) \mat_{i-1}({B /\square},q) 
& =  \sum_{j=0}^{m-1}  K^{m,n}_r(j+1) \mat_j(B/\square,q) \\
& = q^r \sum_{j=0}^{m-1} K^{m-1,n-1}_r(j) \mat_j({B /\square},q)
    - q^{r - 1}\sum_{j=0}^{m-1} K^{m-1,n-1}_{r-1}(j)  \mat_j({B /\square},q).
\end{align*}
Both of the sums in this last expression can again be transformed using 
\eqref{eq:MW}, the first using the board 
$B/\square \subseteq [m-1]\times [n-1]$ at rank $r$, 
and the second using the board 
$B/\square \subseteq [m-1]\times [n-1]$ and rank $r-1$.  This gives
\[
 \sum_{i=0}^m K^{m,n}_r(i) \cdot \mat_{i-1}({B /\square},q) =
q^{r + |B / \square|} \cdot \mat_r(\overline{B /\square},q)
-
q^{r - 1 + |B / \square|} \cdot \mat_{r-1}(\overline{B /\square},q).
\]
Putting everything together, we get
\[
\mat_r(\overline{S},q) 
 = \frac{1}{q} \cdot \mat_r(\overline{B \setminus \square},q) + (q-1)\left(q^{r-1} \cdot \mat_r(\overline{B/\square},q) - q^{r-2} \cdot \mat_{r-1}(\overline{S/\square},q)\right).
\]
Multiplying by $q (q - 1)^{-r}$ on both sides gives the desired result.
\end{proof}

We can also transform these deletion-contraction relations 
in terms of the $q$-hit polynomial.

\begin{corollary} \label{cor:delcon-q-hit-poly}
For any board $B \subseteq[m]\times [n]$ with the NE property and any SW corner $\square$ of $B$,
\[
{P}(B,q,t) = 
{P}(B\setminus \square,q,t) + 
q^{m+|B|-|B/\square|-2} (t-1) \cdot {P}(B / \square,q, q^{-1}t)
\]
and
\[
q\cdot {P}(\overline{B},q,t) = 
{P}(\overline{B\setminus \square},q,t)  -
q^{m-1}(t - q^n) \cdot 
{P}(\overline{B/\square},q,t).
\]
\end{corollary}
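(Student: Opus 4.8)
The plan is to prove both relations by the same mechanism: substitute the appropriate matrix-count deletion-contraction relation into the defining equation \eqref{definition of hits inside rectangle} for the $q$-hit polynomial, and then massage the resulting sums with elementary $q$-Pochhammer manipulations. The structural observation that makes everything line up is that $B$ and $B\setminus\square$ both sit inside $[m]\times[n]$, while $B/\square$ sits inside $[m-1]\times[n-1]$; since $[(n-1)-(m-1)]!_q = [n-m]!_q$, the denominator appearing in the definition of ${P}$ is the \emph{same} for all three boards. The only bookkeeping one must track across the change of ambient rectangle is the prefactor $q^{\binom{m}{2}}$, which becomes $q^{\binom{m-1}{2}}$ for $B/\square$, and here $\binom{m}{2}-\binom{m-1}{2}=m-1$.

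For the first relation, I would start from ${P}(B,q,t) = q^{\binom{m}{2}}\sum_{i} M_i(B,q)\,\frac{[n-i]!_q}{[n-m]!_q}(-1)^i (t;q^{-1})_i$ and insert $M_i(B,q)=M_i(B\setminus\square,q)+q^{|B|-|B/\square|-1}M_{i-1}(B/\square,q)$ from Corollary~\ref{cor:delcontract}. The first piece reassembles into ${P}(B\setminus\square,q,t)$ verbatim. In the second piece I would reindex $j=i-1$ (the $i=0$ term drops since $M_{-1}=0$), so that $[n-i]!_q$ becomes $[n-1-j]!_q$, exactly the factorial appearing in ${P}(B/\square,q,\cdot)$. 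The one genuinely useful identity is
\[
(t;q^{-1})_{j+1} = (1-t)\,(q^{-1}t;q^{-1})_j,
\]
which simultaneously produces the factor $(t-1)$ (after combining with the sign $(-1)^{j+1}$) and the argument shift $t\mapsto q^{-1}t$. Collecting the powers of $q$ then turns the second piece into $q^{m+|B|-|B/\square|-2}(t-1)\,{P}(B/\square,q,q^{-1}t)$, as claimed.

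For the second relation, I would analogously substitute Corollary~\ref{cor:delcontractcomp}, namely $q\,M_r(\overline{B},q)=M_r(\overline{B\setminus\square},q)+q^r(q-1)M_r(\overline{B/\square},q)-q^{r-1}M_{r-1}(\overline{B/\square},q)$, into $q\,{P}(\overline{B},q,t)$. The first term gives ${P}(\overline{B\setminus\square},q,t)$. The remaining two terms both involve $\overline{B/\square}$; after reindexing the last term by $j=r-1$ and rewriting $[n-r]!_q=\frac{q^{n-r}-1}{q-1}[n-1-r]!_q$ in the middle term, both sums acquire the common factorial $[n-1-j]!_q$ and, using $(t;q^{-1})_{j+1}=(t;q^{-1})_j(1-tq^{-j})$, the common Pochhammer $(t;q^{-1})_j$ (both sums effectively run over $0\le j\le m-1$, since the higher terms vanish). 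The key cancellation is that their combined coefficient telescopes:
\[
(q^n-q^j) + q^j(1-tq^{-j}) = q^n - t.
\]
This collapses the two sums into $q^{m-1}(q^n-t)\,{P}(\overline{B/\square},q,t)$, and rewriting $q^n-t=-(t-q^n)$ yields the stated relation.

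The computations are essentially routine once set up, so the main obstacle is organizational rather than conceptual: keeping the index shifts, the sign conventions coming from $(-1)^i$, and the powers of $q$ arising from the two prefactors and the change of ambient rectangle all consistent. The only two steps that require foresight are spotting the two $q$-Pochhammer simplifications above — the factorization $(t;q^{-1})_{j+1}=(1-t)(q^{-1}t;q^{-1})_j$ for the first relation and the telescoping $(q^n-q^j)+q^j(1-tq^{-j})=q^n-t$ for the second — since these are precisely what produce the clean linear factors $(t-1)$ and $(t-q^n)$ in the two recurrences.
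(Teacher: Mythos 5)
Your proof of the first relation is exactly the paper's: substitute Corollary~\ref{cor:delcontract} into the definition \eqref{definition of hits inside rectangle}, reindex, and use $(t;q^{-1})_{j+1}=(1-t)(q^{-1}t;q^{-1})_j$ to extract the factor $(t-1)$ and the argument shift; your bookkeeping of the prefactors ($\binom{m}{2}-\binom{m-1}{2}=m-1$, and $[(n-1)-(m-1)]!_q=[n-m]!_q$) matches. For the second relation, however, you take a genuinely different route. The paper does not touch Corollary~\ref{cor:delcontractcomp} at all: it applies the reciprocity formula \eqref{eq:hitreciprocity2}, ${P}(\overline{B},q,t)=q^{-|B|}t^m{P}(B,q,q^n/t)$, to each of the three terms of the first relation, divides by the appropriate power of $q$ times $t^m$, and substitutes $t\mapsto q^n t^{-1}$; the factor $(t-1)$ evaluated at $q^n t^{-1}$ is what produces $(t-q^n)$ up to a power of $t$ and $q$. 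Your version instead substitutes the complement deletion-contraction relation directly into the definition of ${P}(\overline{B},q,t)$ and relies on the telescoping $(q^n-q^j)+q^j(1-tq^{-j})=q^n-t$ (together with the observation that $M_m(\overline{B/\square},q)=0$ since $\overline{B/\square}$ has only $m-1$ rows, so both sums run over $0\le j\le m-1$). I checked this computation and it is correct. The trade-off: the paper's derivation is computation-free once reciprocity is in hand, and makes transparent that the two relations are reciprocal images of one another; yours is self-contained relative to Corollary~\ref{cor:delcontractcomp} and runs parallel to the proof of the first relation, at the cost of needing the telescoping identity. Either is acceptable.
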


\begin{proof}
To show the first relation, we apply \eqref{eq:matdelcon} to the definition \eqref{definition of hits inside rectangle} to get
\begin{multline*}
{P}(B,q, t) =
q^{\binom{m}{2}}\sum_{r=0}^m M_r(B\setminus\square,q) \frac{[n - r]!_q}{[n - m]!_q} 
  (-1)^r (t; q^{-1})_r + {}\\
{} + q^{\binom{m}{2} +|B|- |B/\square| - 1}
  \sum_{r=0}^{m}M_{r-1}(B/\square,q) \frac{[n - r]!_q}{[n - m]!_q} 
  (-1)^r (t; q^{-1})_r.
\end{multline*}
The first sum equals ${P}(B\setminus \square, q, t)$. 
For the second sum,
changing the index of summation to $i=r-1$ and factoring out the
term $t-1$ produces
\begin{multline*}
q^{\binom{m}{2}  + |B|- |B/\square| - 1}(t - 1) \sum_{i=0}^{m - 1}M_i(B/\square,q) \frac{[(n - 1) - i]!_q}{[(n - 1) - (m - 1)]!_q} (-1)^i (tq^{-1};q^{-1})_i = {}\\
q^{|B|-|B/\square|+m - 2} (t - 1){P}(B / \square,q,q^{-1}t).
\end{multline*}
Substituting back in gives the desired relation.

To show the second relation, we use the reciprocity formula 
\eqref{eq:hitreciprocity2}
to rewrite the first relation in terms of ${P}(\overline{B},q,t)$:
\begin{multline*}
q^{-|\overline{B}|} t^m \cdot {P}(\overline{B},q,q^{n}t^{-1}) = {} \\
     q^{-|\overline{B\setminus\square}|} t^m \cdot 
     {P}(\overline{B\setminus\square},q,q^{n}t^{-1})
                                                    +
   q^{|B|-|B/\square|+m-2-|\overline{B/\square}|}(t-1)t^{m - 1} \cdot {P}(\overline{B/\square},q,q^{n}t^{-1}).
\end{multline*}
Dividing both sides by $q^{- |\overline{B}|-1}t^m$ and substituting
$t\mapsto q^{n}t^{-1}$ gives the result.
\end{proof}

\subsection{Permutation diagrams and deletion-contraction}

In this section, we study deletion-contraction on diagrams $I_w
\subseteq [n]\times [n]$ of permutations $w$ of size $n$. We show that
the deletion and contraction boards from
Corollaries~\ref{cor:delcontract},~\ref{cor:delcontractcomp} are 
actually diagrams of related permutations.

For $w \in \Sym_n$, let $\square=(i,w_j)$ be a SW corner of
$I_w \subseteq [n]\times [n]$.
By definition, there is no other element $(i',w_{j'})$ in $I_w$ (so $i'<j'$, $w_{i'}<w_{j'}$) with $i \leq i'$ and $w_{j'} \leq w_j$.  In particular, in this case none of the entries $w_{i + 1}, \ldots, w_{j - 1}$ have values between $w_i$ and $w_j$, and so the permutation
\[
w\cdot (i,j) = w_1\cdots w_{i-1} \underline{w_j} w_{i+1}\cdots
w_{j-1} \underline{w_i} w_{j+1}\cdots w_n
\]
has exactly one fewer coinversion than $w$.  Next, we show that in fact, $I_{w \cdot (i, j)}$ arises by deleting a single cell from $I_w$.

\begin{proposition}[deletion] \label{prop:delw}
For any $w\in \Sym_n$ and any SW corner $\square=(i,w_j)$ of $I_w$, we have
$I_w \setminus \square = I_{w \cdot (i, j)}$.
\end{proposition}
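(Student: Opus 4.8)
The plan is to reduce the set identity to a pointwise, arithmetic comparison of membership in $I_w$ and $I_{w\cdot(i,j)}$, which behaves transparently under the transposition. First I would record the reformulation
\[
(r,c)\in I_w \iff w^{-1}(c) > r \ \text{ and }\ w_r < c,
\]
obtained by writing a cell as $(i,w_j)$ with $i<j$, $w_i<w_j$ and reading off $j=w^{-1}(c)$. Writing $v\defeq w\cdot(i,j)$, the one-line notation of $v$ agrees with that of $w$ except that $v_i=w_j$ and $v_j=w_i$, and correspondingly $v^{-1}$ agrees with $w^{-1}$ except that the preimages of the two values $w_i$ and $w_j$ are exchanged. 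Consequently, in the reformulation above the membership test for a cell $(r,c)$ gives the same answer for $w$ and for $v$ unless $r\in\{i,j\}$ (which changes $w_r$) or $c\in\{w_i,w_j\}$ (which changes $w^{-1}(c)$), so only this bounded family of cells needs to be examined.

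The structural input comes from the SW corner hypothesis, which I would restate as: no cell of $I_w$ lies in the region $\{(r,c): r\geq i,\ c\leq w_j\}$ other than $\square$ itself. From this I would extract the key fact that every entry strictly between positions $i$ and $j$ satisfies $w_k>w_j$: if some $w_k$ with $i<k<j$ had $w_k<w_j$, then either $w_i<w_k$, giving via the coinversion $(i,k)$ the cell $(i,w_k)$, or $w_k<w_i<w_j$, giving via the coinversion $(k,j)$ the cell $(k,w_j)$; in both cases the resulting cell lies in the forbidden region and differs from $\square$, a contradiction. Using the reformulation together with this restatement, I would then run the short case analysis over $r\in\{i,j\}$ and $c\in\{w_i,w_j\}$, checking in each case that the two membership tests agree. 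The sole disagreement is at $(r,c)=(i,w_j)=\square$, which passes the test for $w$ (since $w^{-1}(w_j)=j>i$ and $w_i<w_j$) but fails it for $v$ (since $v^{-1}(w_j)=i\not>i$); this is exactly the deleted cell.

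I expect the main obstacle to be the cases with row $r$ strictly between $i$ and $j$, together with the cases where the row is $i$ or $j$ and the column $c$ lies strictly between $w_i$ and $w_j$: these are precisely the configurations in which the swap threatens to create or destroy a cell other than $\square$. Each must be excluded by exhibiting a cell of $I_w$ inside the forbidden SW region --- equivalently, by the fact that every intermediate entry exceeds $w_j$. Once all of these are ruled out, the pointwise biconditional holds for every $(r,c)\neq\square$, yielding $I_{w\cdot(i,j)}=I_w\setminus\square$ directly, with no appeal to cardinality. As a sanity check this is consistent with the already-noted fact that $w\cdot(i,j)$ has one fewer coinversion than $w$, so $|I_{w\cdot(i,j)}|=|I_w|-1$; alternatively one could combine this count with a single inclusion, say $I_w\setminus\square\subseteq I_{w\cdot(i,j)}$, proved by relocating each surviving coinversion of $w$ to the coinversion of $v$ producing the same cell.
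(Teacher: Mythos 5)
Your proposal is correct. The key lemma you isolate --- that the SW-corner condition forces every entry $w_k$ with $i<k<j$ to exceed $w_j$ (and, implicitly, forces every value strictly between $w_i$ and $w_j$ to occur at a position before $i$) --- is exactly the structural fact the paper relies on, and your membership criterion $(r,c)\in I_w \iff w^{-1}(c)>r$ and $w_r<c$ correctly localizes the effect of the transposition to rows in $\{i,j\}$ and columns in $\{w_i,w_j\}$. Where you diverge from the paper is in the logical architecture: the paper proves only the inclusion $I_w\setminus\square\subseteq I_{w\cdot(i,j)}$, by a case analysis on which of the two indices of a coinversion of $w$ coincides with $i$ or $j$ (halving the work via the antidiagonal symmetry), and then closes the argument by the cardinality count $|I_{w\cdot(i,j)}|=|I_w|-1$ established just before the statement. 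You instead prove the pointwise biconditional for every cell other than $\square$, which requires the extra ``no cell is created'' cases you correctly flag as the delicate ones (rows strictly between $i$ and $j$ in columns $w_i$ or $w_j$, and columns strictly between $w_i$ and $w_j$ in rows $i$ or $j$), each ruled out by exhibiting a cell of $I_w$ in the forbidden SW region. Your version costs a few more cases but is self-contained and needs no coinversion count; the paper's is shorter because the counting step absorbs the reverse inclusion. You note this trade-off yourself in your final sentence, and the alternative you describe there is precisely the paper's proof.
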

\begin{proof}
Abbreviate $w' \defeq w \cdot (i, j)$.
Consider a box $(a, w_b)$ of $I_w$ corresponding to a coinversion between entries $(a, w_a)$ and $(b, w_b)$ of $w$ (so necessarily $a < b$ and $w_a < w_b$).  If $a = i$ and $b = j$ then obviously this box is absent in $I_{w'}$; we show that all other boxes in $I_w$ are in $I_{w'}$.

First, if $\{a, b\}$ is disjoint from $\{i, j\}$, then $(a, w_a) = (a, w'_a)$ and $(b, w_b) = (b, w'_b)$ are entries of $w'$ and so $(a, w_b)$ belongs to $I_{w'}$.

Second, suppose that $a = j$, and so $b > j$ and $w_b > w_a = w_j > w_i$.  Then the entries $(a, w'_a) = (j, w_i)$ and $(b, w'_b) = (b, w_b)$ of $w'$ form a coinversion and so $(a, w_b)$ belongs to $I_{w'}$.

Third, suppose that $a = i$ (and so $b > i$).  Since $\square$ is a SW corner of $I_w$, we must have $w_b > w_j$.  Then the entries $(i, w'_i) = (i, w_j)$ and $(b, w'_b) = (b, w_b)$ of $w'$ form a coinversion and so $(a, w_b)$ belongs to $I_{w'}$.

Fourth, the cases $b = i$ and $b = j$ are symmetric with the last two cases after reflecting everything across the main antidiagonal.

Finally, since $I_w$ has strictly more coinversions than $I_{w'}$, it follows that these are all the elements of $I_{w'}$, as desired.
\end{proof}

Next, we show that contractions of permutation diagrams are also permutation diagrams.
\begin{definition}
\label{def:contr}
Given a permutation $w \in \Sym_n$ whose diagram $I_w$ has a SW corner in position $(i, w_j)$, 
let $v \in \Sym_{n-1}$ be the permutation order-isomorphic to
\[
w_1\cdots w_{i-1} w_{i+1} \cdots w_{j-1} \underline{w_i} w_{j+1}\cdots w_n.
\]
\end{definition}
For example, the diagram of the permutation $w = 139547628 \in \Sym_9$ has a SW corner in position $(5, w_7) = (5, 6)$ (see Figure~\ref{fig:contraction}), and $v = 13856427 \in \Sym_8$ is the permutation order-isomorphic to $13957428$.

\begin{figure}
\begin{center}
\includegraphics[scale=1.2]{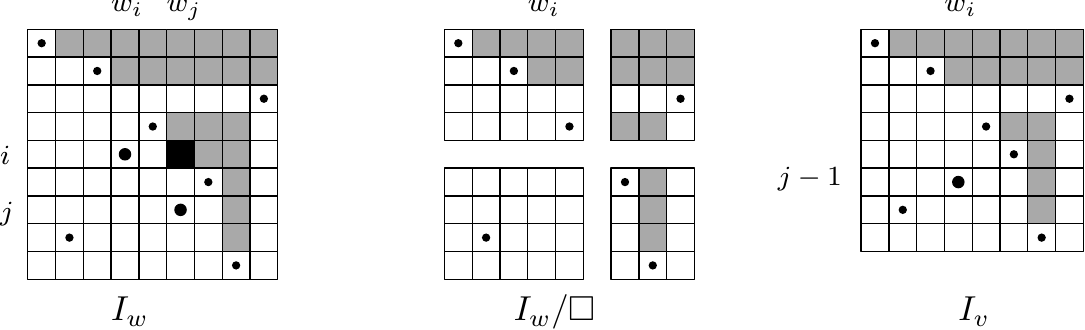}
\end{center}
\caption{Left: The diagram of the permutation $w = 139547628$, with SW corner $\square = (5, 6)$ involving the entries $(i, w_i) = (5, 4)$ and $(j, w_j) = (7, 6)$.  Center: the contracted diagram.  Right: the diagram of the permutation $v = 13856427$.}
\label{fig:contraction}
\end{figure}

\begin{proposition}[contraction] \label{prop:contrw}
For any $w\in \Sym_n$ and any SW corner $\square=(i,w_j)$ of $I_w$, we have 
$I_w / \square = I_{v}$, where $v$ is the permutation defined in Definition~\ref{def:contr}.
\end{proposition}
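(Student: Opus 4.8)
\emph{Setup.} The plan is to prove the equality $I_w/\square=I_v$ of subsets of $[n-1]\times[n-1]$ by matching cells through the re-indexing that defines the contraction. Writing $\square=(i,w_j)$, the board $I_w/\square$ is obtained from $I_w$ by deleting row $i$ and column $w_j$ and then compressing via the order-preserving bijections $r\colon\{1,\dots,n\}\setminus\{i\}\to[n-1]$, given by $r(a)=a$ for $a<i$ and $r(a)=a-1$ for $a>i$, on rows, and $c\colon\{1,\dots,n\}\setminus\{w_j\}\to[n-1]$, given by $c(y)=y$ for $y<w_j$ and $c(y)=y-1$ for $y>w_j$, on columns. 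Hence
\[
I_w/\square=\bigl\{(r(a),c(w_b)) : (a,w_b)\in I_w,\ a\neq i,\ b\neq j\bigr\},
\]
and it suffices to show that $(r(a),c(w_b))\in I_v$ for every surviving cell and, conversely, that every cell of $I_v$ arises this way. Throughout I would use the criterion that $(x,y)\in I_v$ if and only if $v_x<y$ and $v^{-1}(y)>x$.

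\emph{Tracking entries.} Let $\phi\colon\{1,\dots,n\}\setminus\{j\}\to[n-1]$ send each position $q$ of $w$ to the position that the entry $w_q$ occupies in the word $w_1\cdots w_{i-1}\,w_{i+1}\cdots w_{j-1}\,w_i\,w_{j+1}\cdots w_n$ defining $v$; explicitly $\phi(q)=q$ for $q<i$, $\phi(i)=j-1$, and $\phi(q)=q-1$ for $i<q<j$ or $q>j$. Since $c$ is exactly the standardization of the values $\{1,\dots,n\}\setminus\{w_j\}$, one has $v_{\phi(q)}=c(w_q)$ for every $q\neq j$; in particular $v^{-1}(c(w_b))=\phi(b)$. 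Comparing $\phi$ with $r$ shows $r(a)=\phi(a)$ for $a\neq i,j$, while $r(j)=\phi(i)=j-1$, so that $v_{r(a)}=c(w_a)$ when $a\neq j$ and $v_{r(j)}=c(w_i)$. Thus the membership test for $(r(a),c(w_b))$ in $I_v$ becomes the pair of inequalities $v_{r(a)}<c(w_b)$ and $\phi(b)>r(a)$, which, as $c$ is order-preserving, I can rewrite entirely in terms of $w$.

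\emph{The structural input from the SW corner.} The crux is the following consequence of $\square=(i,w_j)$ being a SW corner: every entry $w_a$ with $i<a<j$ satisfies $w_a>w_j$. Indeed, if instead $w_i<w_a<w_j$ then $(i,w_a)$ is a cell of $I_w$ lying weakly south and weakly west of $\square$, while if $w_a<w_i$ then $(a,w_j)$ is such a cell; either possibility contradicts SW-cornerness. With this in hand, the forward inclusion is a short case check: for a surviving cell $(a,w_b)\in I_w$ (so $a<b$ and $w_a<w_b$), order-preservation of $c$ gives $v_{r(a)}<c(w_b)$ directly when $a\neq j$, and via $w_i<w_j<w_b$ when $a=j$, while $\phi(b)>r(a)$ follows from $a<b$ by inspecting the piecewise formulas for $\phi$ and $r$. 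Because $r$ and $c$ are injective, this exhibits $I_w/\square$ as a subset of $I_v$.

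\emph{Surjectivity and conclusion.} For the reverse inclusion I would start from an arbitrary $(x,y)\in I_v$, recover the unique $b\neq j$ with $c(w_b)=y$ and the unique $a\neq i$ with $r(a)=x$, and verify $(a,w_b)\in I_w$. The cases $a\neq j$ and $b\neq i$ run backward through the same inequalities; the two remaining cases are precisely where the structural fact is essential. If $b=i$, the inequalities give $w_a<w_i$ and, unless $a<i$, also $i<a<j$; but $i<a<j$ with $w_a<w_i$ is forbidden above, so $a<i$ and $(a,w_i)\in I_w$. If $a=j$, they give $w_i<w_b$ and $b>j$, and SW-cornerness upgrades this to $w_j<w_b$ (otherwise $(i,w_b)$ would sit south-west of $\square$), so $(j,w_b)\in I_w$. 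Combining the two inclusions yields $I_w/\square=I_v$. I expect the main obstacle to be exactly this surjectivity step: the re-indexing maps $r$ and $\phi$ disagree at the single position $i\mapsto j-1$ created by moving the entry $w_i$, and only the SW-corner hypothesis rules out the spurious cells that this discrepancy would otherwise introduce.
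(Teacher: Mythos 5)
Your proposal is correct and follows essentially the same route as the paper's proof: set up the order-preserving re-indexing bijections $r$ and $c$, observe that $v_{r(a)}=c(w_a)$ for $a\neq j$ while $v_{r(j)}=c(w_i)$, and run a case analysis on $a=j$, $b=i$, and the generic case for both inclusions, with the SW-corner hypothesis invoked exactly where the paper invokes it (to rule out spurious cells in the converse directions). Your explicit position-tracking map $\phi$ and the isolated structural lemma ($w_a>w_j$ for $i<a<j$) are just slightly more formalized versions of observations the paper makes inline.
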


\begin{proof}
Let $r$ and $c$ be the order-preserving bijections $r: [n] \setminus \{i\} \to [n - 1]$ and $c: [n] \setminus \{w_j\} \to [n - 1]$, so that $f \defeq r \times c$ is the natural bijection between cells of $[n] \times [n]$ not in the row or column of $\square$ and cells in $[n - 1] \times [n - 1]$.  We seek to show that $f$ restricts to a bijection between the relevant cells of $I_w$ and those of $I_v$.  Observe that by the definition of $v$, we have $v_{r(a)} = c(w_a)$ for all $a \neq j$, while $v_{r(j)} = c(w_i)$.

Consider a box $(a,w_b)$ of $I_w/\square$ corresponding to a coinversion
between entries $(a,w_a)$ and $(b,w_b)$ of
$w$ (so $a<b$, $w_a<w_b$, $a \neq i$, $b \neq j$).  We have three cases.

First, assume $a \neq j$ and $b \neq i$.  Since $r$ and $c$ are order-preserving, we have $r(a) < r(b)$ and $v_{r(a)} = c(w_a) < c(w_b) = v_{r(b)}$.  Therefore $f(a, w_b) = (r(a), v_{r(b)})$ belongs to $I_v$.  Conversely, if $(r(a), v_{r(b)}) \in I_v$ is such that $a \neq j$ and $b \neq i$ then $f^{-1}(r(a), v_{r(b)}) = (a, w_b) \in I_w$.

Second, suppose $b = i$.  Then $a < i < j$ and $w_a < w_i$.  Thus $r(a) < r(j)$ and $v_{r(a)} = c(w_a) < c(w_i) = v_{r(j)}$, and therefore $f(a, w_i) = (r(a), v_{r(j)})$ belongs to $I_v$.  Conversely, suppose $(r(a), v_{r(j)}) \in I_v$, so that $r(a) < r(j)$ and $c(w_a) = v_{r(a)} < v_{r(j)} = c(w_i)$.  Since $\square = (i, w_j)$ is a SW corner in $I_w$, all of the values $w_{i + 1}, \ldots, w_{j - 1}$ must be larger that $w_j$, and so also larger than $w_i$.  Therefore, $a$ is not equal to any of $i + 1, \ldots, j - 1$, so $a < i$.  Thus $f^{-1}(r(a), v_{r(j)}) = (a, w_i)$ belongs to $I_w$.

Finally, suppose $a = j$. Then $b > j$ and $w_b > w_j > w_i$.  Thus $r(b) > r(j)$ and $v_{r(b)} = c(w_b) > c(w_i) = v_{r(j)}$, and therefore $f(j, w_b) = (r(j), v_{r(b)})$ belongs to $I_v$.  Conversely, suppose $(r(j), v_{r(b)}) \in I_v$, so that $r(j) < r(b)$ and $c(w_i) = v_{r(j)} < v_{r(b)} = c(w_b)$.  Since $\square = (i, w_j)$ is a SW corner in $I_w$, all of the values $w^{-1}_{w_i + 1}$, \ldots, $w^{-1}_{w_j - 1}$ must be smaller than $i$, and so also smaller than $j$.  Therefore, $b$ is not equal to any of these values, so $b > j$.  Thus $f^{-1}(r(j), v_{r(b)}) = (i, w_b)$ belongs to $I_w$.

The three cases cover every coinversion of $w$ not in row $i$ or column $w_j$ and every coinversion of $v$, and so $f$ is a bijection between these two sets.  The result follows immediately.
\end{proof}

\begin{corollary}
For any permutation $w\in \Sym_n$, let $\square=(i,w_j)$ be any SW corner of $I_w$ and let $v$ be the permutation defined in Definition~\ref{def:contr}.  For any $1\leq r \leq n$, we have 
\begin{equation} \label{eq:delconMw}
M_r(I_w,q) =  M_r(I_{w \cdot (i, j)},q) + q^{n-2-\ell(w)+\ell(v)} M_{r-1}(I_v,q).
\end{equation}
\end{corollary}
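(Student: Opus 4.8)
The plan is to obtain \eqref{eq:delconMw} by specializing the general matrix-count deletion-contraction relation of Corollary~\ref{cor:delcontract} to the board $B = I_w$ and then translating each term into the language of permutation diagrams using Propositions~\ref{prop:delw} and~\ref{prop:contrw}. Since permutation diagrams have the NE property and $\square = (i, w_j)$ is by hypothesis a SW corner of $I_w$, Corollary~\ref{cor:delcontract} applies verbatim and gives
\[
M_r(I_w, q) = M_r(I_w \setminus \square, q) + q^{|I_w| - |I_w / \square| - 1} \cdot M_{r-1}(I_w / \square, q).
\]

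Next I would rewrite the two boards appearing on the right. By Proposition~\ref{prop:delw} the deletion board is $I_w \setminus \square = I_{w \cdot (i,j)}$, and by Proposition~\ref{prop:contrw} the contraction board is $I_w / \square = I_v$, where $v \in \Sym_{n-1}$ is the permutation of Definition~\ref{def:contr}. Substituting these two identifications turns the displayed equation into
\[
M_r(I_w, q) = M_r(I_{w \cdot (i,j)}, q) + q^{|I_w| - |I_v| - 1} \cdot M_{r-1}(I_v, q),
\]
which already matches \eqref{eq:delconMw} apart from the precise form of the exponent.

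It remains only to evaluate that exponent, a short bookkeeping computation using the relation $|I_u| = \binom{k}{2} - \ell(u)$ valid for any $u \in \Sym_k$ (recorded in Section~\ref{sec:notation}). Applying this with $w \in \Sym_n$ and $v \in \Sym_{n-1}$ gives $|I_w| = \binom{n}{2} - \ell(w)$ and $|I_v| = \binom{n-1}{2} - \ell(v)$, so that
\[
|I_w| - |I_v| - 1 = \left(\binom{n}{2} - \binom{n-1}{2}\right) - 1 - \ell(w) + \ell(v) = n - 2 - \ell(w) + \ell(v),
\]
using $\binom{n}{2} - \binom{n-1}{2} = n-1$. This is exactly the power of $q$ appearing in \eqref{eq:delconMw}.

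I do not expect a genuine obstacle here: all the substantive work has already been done in establishing Corollary~\ref{cor:delcontract} together with the deletion and contraction Propositions~\ref{prop:delw} and~\ref{prop:contrw}. The only point requiring a moment of care is that the contraction board $I_w / \square$ must be viewed inside the smaller rectangle $[n-1] \times [n-1]$ as the diagram of a permutation of $\Sym_{n-1}$ — which is precisely the content of Proposition~\ref{prop:contrw} — so that the formula $|I_v| = \binom{n-1}{2} - \ell(v)$, and hence the exponent computation above, is legitimate.
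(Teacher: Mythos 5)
Your proposal is correct and matches the paper's own proof exactly: both apply Corollary~\ref{cor:delcontract} with $B = I_w$, invoke Propositions~\ref{prop:delw} and~\ref{prop:contrw} to identify the deletion and contraction boards as $I_{w\cdot(i,j)}$ and $I_v$, and compute the exponent via $|I_u| = \binom{k}{2} - \ell(u)$. No gaps.
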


\begin{proof}
Apply \eqref{eq:matdelcon} in the case $m=n$ and
$B=I_w$, using Propositions~\ref{prop:delw},~\ref{prop:contrw} to
express $I_w\setminus \square$ and $I_w /\square$ as $I_{w \cdot (i, j)}$ and $I_v$ respectively. Since $|I_w|=\binom{n}{2}-\ell(w)$, we have $|I_w|-|I_v|-1=n-2-\ell(w)+\ell(v)$.
\end{proof}

\begin{corollary} \label{cor:delconw}
For any permutation $w\in \Sym_n$, let $\square=(i,w_j)$ be any SW corner of $I_w$ and let $v$ be the permutation defined in Definition~\ref{def:contr}.  For any $1\leq r \leq n$, we have 
\[
q\cdot M_r(\overline{I_w},q) =  M_r(\overline{I_{w \cdot (i, j)}},q) +q^r(q-1) \cdot M_r(\overline{I_v},q) - q^{r-1}\cdot M_{r-1}(\overline{I_v},q).
\]
\end{corollary}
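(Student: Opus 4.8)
The plan is to derive this corollary directly from the general complement deletion-contraction relation in Corollary~\ref{cor:delcontractcomp}, using the identification of the deletion and contraction boards as permutation diagrams. Since permutation diagrams $I_w$ have the NE property and $\square = (i, w_j)$ is a SW corner of $I_w$, the hypotheses of Corollary~\ref{cor:delcontractcomp} are satisfied with $m = n$ and $B = I_w$. That corollary gives
\[
q\cdot M_r(\overline{I_w}, q) =  M_r(\overline{I_w \setminus \square}, q) + q^r(q-1)\cdot M_r(\overline{I_w/\square}, q) - q^{r-1}\cdot M_{r-1}(\overline{I_w/\square}, q).
\]
So the entire content of the proof is to rewrite the deletion and contraction boards appearing on the right-hand side as diagrams of the appropriate permutations.

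The key step is to invoke Propositions~\ref{prop:delw} and~\ref{prop:contrw}, which identify these boards precisely. By Proposition~\ref{prop:delw}, the deletion board satisfies $I_w \setminus \square = I_{w \cdot (i, j)}$, so $\overline{I_w \setminus \square} = \overline{I_{w \cdot (i, j)}}$. By Proposition~\ref{prop:contrw}, the contraction board satisfies $I_w / \square = I_v$ (where $v \in \Sym_{n-1}$ is the permutation from Definition~\ref{def:contr}), living inside $[n-1] \times [n-1]$, so its complement taken inside that smaller rectangle is $\overline{I_v}$. Substituting these two identifications into the displayed relation above yields exactly the claimed formula.

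I expect this proof to be essentially a one-line substitution, so there is no serious obstacle remaining: all the hard work has already been done in establishing Corollary~\ref{cor:delcontractcomp} and Propositions~\ref{prop:delw} and~\ref{prop:contrw}. The only point requiring a moment of care is the convention, fixed at the start of Section~\ref{sec:deletion-contraction}, that $I_w / \square$ is regarded as living in the smaller rectangle $[n-1] \times [n-1]$; this is exactly the ambient rectangle in which $\overline{I_v}$ is computed, so the complements match up consistently and no spurious powers of $q$ or discrepancies in board size arise. Thus the proof reduces to applying Corollary~\ref{cor:delcontractcomp} and then substituting the two board identifications.
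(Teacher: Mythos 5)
Your proposal is correct and matches the paper's proof exactly: the paper also applies Corollary~\ref{cor:delcontractcomp} with $m=n$ and $B=I_w$, then uses Propositions~\ref{prop:delw} and~\ref{prop:contrw} to identify $I_w\setminus\square$ and $I_w/\square$ as $I_{w\cdot(i,j)}$ and $I_v$. Your added remark about the contraction living in $[n-1]\times[n-1]$ is a sensible sanity check and consistent with the convention fixed in Section~\ref{sec:deletion-contraction}.
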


\begin{proof}
Apply Corollary~\ref{cor:delcontractcomp} in the case $m=n$ and
$B=I_w$, using Propositions~\ref{prop:delw},~\ref{prop:contrw} to
express $I_w\setminus \square$ and $I_w /\square$ as $I_{w \cdot (i, j)}$ and $I_v$ respectively. 
\end{proof}

The preceding result is particularly nice in the full-rank case.

\begin{corollary}%[full rank case]
\label{cor:full rank}
For any permutation $w\in \Sym_n$, let $\square=(i,w_j)$ be any SW corner of $I_w$ and let $v$ be the permutation defined in Definition~\ref{def:contr}.  We have 
\[
q \cdot M_n(\overline{I_w},q) =  M_n(\overline{I_{w \cdot (i, j)}},n) - q^{n-1} \cdot M_{n-1}(\overline{I_v},q).
\]
\end{corollary}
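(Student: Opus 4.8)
The plan is to obtain this full-rank identity as the special case $r = n$ of the general complement deletion-contraction relation already established in Corollary~\ref{cor:delconw}. First I would set $r = n$ there, which yields
\[
q\cdot M_n(\overline{I_w},q) = M_n(\overline{I_{w \cdot (i, j)}},q) + q^n(q-1) \cdot M_n(\overline{I_v},q) - q^{n-1}\cdot M_{n-1}(\overline{I_v},q).
\]
Comparing this with the claimed identity, the only discrepancy is the extra middle term $q^n(q-1)\cdot M_n(\overline{I_v},q)$, so the entire content of the corollary is to show that this term vanishes.

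The key observation is a dimension count. By Definition~\ref{def:contr} the permutation $v$ lies in $\Sym_{n-1}$, so by Proposition~\ref{prop:contrw} its diagram $I_v = I_w/\square$ sits inside the smaller rectangle $[n-1]\times[n-1]$, and therefore so does its complement $\overline{I_v}$. Consequently every matrix with support in $\overline{I_v}$ is an $(n-1)\times(n-1)$ matrix, whose rank is at most $n-1 < n$. Hence $\mat_n(\overline{I_v},q) = 0$, and so $M_n(\overline{I_v},q) = \mat_n(\overline{I_v},q)/(q-1)^n = 0$ as well.

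Substituting $M_n(\overline{I_v},q) = 0$ into the displayed specialization immediately kills the middle summand and produces exactly
\[
q \cdot M_n(\overline{I_w},q) = M_n(\overline{I_{w \cdot (i, j)}},q) - q^{n-1} \cdot M_{n-1}(\overline{I_v},q),
\]
which is the assertion. There is essentially no obstacle here: the argument is a one-line specialization together with the trivial remark that an $(n-1)\times(n-1)$ matrix cannot have rank $n$. The only point requiring a moment's care is confirming that the contracted board genuinely lives in $[n-1]\times[n-1]$ (rather than, say, $[n-1]\times[n]$), which is precisely what Proposition~\ref{prop:contrw} and Definition~\ref{def:contr} guarantee, since $v$ is a permutation of size $n-1$.
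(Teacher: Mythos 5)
Your proposal is correct and is exactly the argument the paper intends (the paper states this corollary without proof as an immediate specialization of Corollary~\ref{cor:delconw}): set $r=n$ and observe that the middle term vanishes because $\overline{I_v}\subseteq[n-1]\times[n-1]$ forces $M_n(\overline{I_v},q)=0$. Incidentally, the ``$,n)$'' in the statement's right-hand side is a typo for ``$,q)$'', as your derivation confirms.
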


Finally, we can rewrite these relations in terms of the $q$-hit polynomial ${P}$.

\begin{corollary}
For any permutation $w\in \Sym_n$, let $\square=(i,w_j)$ be any SW corner of $I_w$ and let $v$ be the permutation defined in Definition~\ref{def:contr}.  We have
\[
{P}(I_w,q,t) = {P}(I_{w \cdot (i, j)},q,t) + q^{2n-3+\ell(v)-\ell(w)}(t-1) \cdot {P}(I_v,q,q^{-1}t)
\]
and 
\[
q\cdot {P}(\overline{I_w},q,t) = {P}(\overline{I_{w\cdot (i, j)}},q,t) -
q^{n-1}(t-q^n) \cdot {P}(\overline{I_v},q,t).
\]
\end{corollary}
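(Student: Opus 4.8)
The plan is to derive both relations by specializing the general deletion-contraction relations for the $q$-hit polynomial (Corollary~\ref{cor:delcon-q-hit-poly}) to the board $B = I_w \subseteq [n] \times [n]$, taking $m = n$. The two preceding propositions have already done the combinatorial work: Proposition~\ref{prop:delw} identifies the deletion board $I_w \setminus \square$ with $I_{w \cdot (i, j)}$, and Proposition~\ref{prop:contrw} identifies the contraction board $I_w / \square$ with $I_v$. Since permutation diagrams have the NE property (as observed in Section~\ref{sec:NE property}) and $\square = (i, w_j)$ is by hypothesis a SW corner of $I_w$, the hypotheses of Corollary~\ref{cor:delcon-q-hit-poly} are met, so I may invoke it verbatim.

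First I would substitute these board identifications into the second (complementary) relation of Corollary~\ref{cor:delcon-q-hit-poly}. With $m = n$, the prefactor $q^{m-1}$ becomes $q^{n-1}$ and the factor $(t - q^n)$ is unchanged, yielding immediately
\[
q \cdot {P}(\overline{I_w}, q, t) = {P}(\overline{I_{w \cdot (i, j)}}, q, t) - q^{n-1}(t - q^n) \cdot {P}(\overline{I_v}, q, t),
\]
with no further computation required.

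For the first relation, the only genuine task is to simplify the exponent $m + |B| - |B/\square| - 2$ in Corollary~\ref{cor:delcon-q-hit-poly}. I would use $|I_w| = \binom{n}{2} - \ell(w)$ and $|I_v| = \binom{n-1}{2} - \ell(v)$ together with the identity $\binom{n}{2} - \binom{n-1}{2} = n - 1$ to compute
\[
m + |I_w| - |I_v| - 2 = n + \left( \binom{n}{2} - \ell(w) \right) - \left( \binom{n-1}{2} - \ell(v) \right) - 2 = 2n - 3 + \ell(v) - \ell(w).
\]
Substituting the board identifications and this exponent into the first relation of Corollary~\ref{cor:delcon-q-hit-poly} (and carrying along the unchanged factor $(t-1)$ and the argument shift $q^{-1}t$) then produces the claimed formula. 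The main obstacle, such as it is, is purely this exponent bookkeeping; all of the substantive content lives in Propositions~\ref{prop:delw} and~\ref{prop:contrw} and in Corollary~\ref{cor:delcon-q-hit-poly}, each of which may be applied exactly as stated.
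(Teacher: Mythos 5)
Your proof is correct and follows the same route as the paper: the paper's proof is exactly "combine Corollary~\ref{cor:delcon-q-hit-poly} for $m=n$, $B=I_w$ with Propositions~\ref{prop:delw} and~\ref{prop:contrw}," and your exponent computation $n + |I_w| - |I_v| - 2 = 2n-3+\ell(v)-\ell(w)$ is the only bookkeeping needed and is carried out correctly.
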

\begin{proof}
Combine Corollary~\ref{cor:delcon-q-hit-poly}
for $m=n$, $B=I_w$ with Propositions~\ref{prop:delw} and~\ref{prop:contrw}.
\end{proof}

\subsection{Failures of positivity}
\label{sec:failures of positivity}

There are no permutations in $w \in \Sym_n$ for $n < 9$ for which any
coefficient of $M_r(\overline{I_w},q)$ is negative for any $r$.  However, for $n \geq 9$, there are counterexamples to the positivity aspect of Conjecture~\ref{rothe conjecture}.

\begin{figure}
\begin{tabular}{cp{1in}c}
\includegraphics{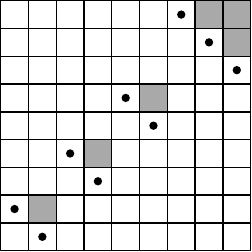}
&&
\includegraphics{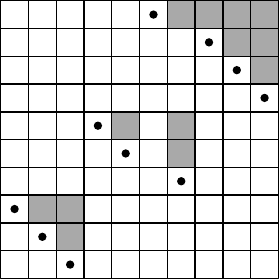}
\\
(a)
&&
(b)
\end{tabular}
\caption{The diagrams of the two permutations $789563412 \in \Sym_9$ (left) and $6\,8\,9\,10\,4\,5\,7\,1\,2\,3 \in \Sym_{10}$ (right).}
\label{fig:failures of positivity}
\end{figure}

\begin{example}
Let $w=789563412 \in \Sym_9$; its diagram is shown in Figure~\ref{fig:failures of positivity}(a).  Applying Corollary~\ref{cor:delconw} (or the formula in \cite[Prop.~3.1]{KleinLM}) gives
\[
M_1(\overline{I_w},q) %&= q^4\sum_{k=0}^4 \binom{4}{k}(q^{9-k}-1)(q-1)^{k-2} -
%  \frac{q^9-1}{(q-1)^2} + q^5\sum_{k=0}^3 \binom{3}{k}(q^{7-k}-1)(q-1)^{k-1}\\
%&=
=
24q^{11}-4q^{10}+10q^9+9q^8+8q^7+7q^6+6q^5+5q^4+4q^3+3q^2+2q+1.
\]
\end{example}
There are three other permutations in $\Sym_9$ whose diagrams are 
trivial rearrangements of the previous example (namely, 
$895673412$, $896734512$, and $896745123$).  These are the only 
permutations in $\Sym_9$ for which $M_r(\overline{I_w}, q)$ has some negative 
coefficients, and they only have negative coefficients in rank $r = 1$.

\begin{example}
Let $w =6\,8\,9\,10\,4\,5\,7\,1\,2\,3 \in \Sym_{10}$; its diagram is shown in Figure~\ref{fig:failures of positivity}(b).  Applying Corollary~\ref{cor:full rank} gives 
\[
M_{10}(\overline{I_w},q)=q^{77} + 9q^{76} + 44q^{75} + \cdots + 2q^{48} - 8q^{47} - q^{46} + q^{45}.
\]
\end{example}
In total, there are $37$ permutations $w$ in $\Sym_{10}$ for which $M_{10}(\overline{I_w}, q)$ has negative coefficients and $303$ for which $M_1(\overline{I_w}, q)$ has negative coefficients, including $11$ permutations for which both polynomials have negative coefficients.  The coefficients of $M_r(\overline{I_w}, q)$ are nonnegative for all $w \in \Sym_{10}$ if $2 \leq r \leq 9$.

These calculations are also sufficient to disprove another reasonable conjecture: that positivity is a pattern property.  Indeed, the permutation $w = 5\,8\,9\,10\,6\,7\,3\,4\,1\,2 \in \Sym_{10}$ has all coefficients of $M_r(\overline{I_w}, q)$ nonnegative for all ranks $r$, but it contains the permutation $789563412$ as a pattern.

\section{Remarks and open problems}
\label{sec:final}

\subsection{Combinatorial intepretation of $q$-hit numbers} \label{sec:hit-combinatorial-interpretation}

The main problem raised by our work is to give a combinatorial interpretation to the $q$-hit number $\nHit_i(B, q)$.
\begin{question}
\label{open question:interpretation}
Is there a nice choice of a set $S$ ($ = S(i, B, q)$) such that the cardinality $|S|$ is equal to the $q$-hit number $\nHit_i(B, q)$?
\end{question}
Corollary~\ref{cor:hit partition}, expressing the number of full-rank matrices of a given shape as a sum of hit numbers (times an easy-to-understand factor), suggests that a nice description would be in terms of a partition of the set of full-rank $m \times n$ matrices.  

For the case of Ferrers boards $S_{\lambda}$ and $n=m$, Haglund \cite{Haglund}
gave an interpretation for the
Garsia--Remmel $q$-hit numbers in terms of matrices over
finite fields.  (Recall that in this case the Garsia--Remmel $q$-hit
numbers agree with our $q$-hit numbers by Proposition~\ref{prop:q-hit
  and gr}.) This interpretation roughly goes
like this: by Proposition~\ref{prop:hitasMs}, the $k$th $q$-hit number
can be written as
\begin{multline*}
(q-1)^n q^{-\binom{n}{2}} \nHit_k(S_{\lambda},q) = \sum_{i=k}^{n} \mat_q(i,S_{\lambda}) \cdot
(q^n- q^{i})(q^{n-1}-q^{i})\cdots (q^{i+1}-q^{i}) \times {} \\
{} \times \qbin{i}{k}{q} (-1)^{i+k} q^{\binom{k+1}{2} - i(n+k-i)}.
\end{multline*}
View the right side as counting the following {\em replacement procedure} \cite[\S 2]{Haglund}:
\begin{enumerate}[\quad 1.]
\item start with a matrix $A$ of rank $i$ and support in $S_{\lambda}$,
\item after doing row elimination on $A$ in a specified order, replace, with certain
  rules, the $n-i$ rows without pivots to obtain a matrix $A'$ of full rank, 
\item assign a certain signed weight to the matrix $A'$, and
\item do row elimination on $A'$ and record the number of pivots
  $j$ that do not belong to $B$.
\end{enumerate}
Haglund showed that weighted contribution of such matrices with $j
<n-k$ is zero whereas the contribution of those with $j=n-k$ is one.

Unfortunately, we were unable to extend this elimination procedure to other families
of boards. One possible (but so far unsuccessful) approach is described in the next remark.

\begin{remark}
Equation \eqref{eq:classic_rookhit_rel} has a natural double-counting proof that one might try to emulate to find an interpretation for $\nHit_i(B,q)$.  
Consider \eqref{definition of hits inside rectangle} in the case $m =
n$; rearranging powers of $q$ and $q-1$, we have
\[
(q-1)^n\sum_{i=0}^n \nHit_{i}(B,q) t^i  \,=\, \sum_{i=0}^n \mat_i(B,q)
\cdot \prod_{k=i}^{n-1}(q^n-q^k) \cdot \prod_{j = 0}^{i - 1} (t-q^j).
\]
When $t=q^N$, we can view the right side as counting triples $(A,\beta,\phi)$
where $A$ is a matrix over $\F_q$ with support in $B$, 
$\beta$ is an ordered relative basis for $\F_q^n$ over the rowspace of $A$, 
and $\phi$ is an injective linear map from the rowspace to $\F_q^N$. 
\end{remark}

We also offer a weak conjecture that is certainly a precondition for an affirmative answer to Question~\ref{open question:interpretation}.
\begin{conjecture}
\label{weak positivity conjecture}
Given any board $B \subseteq [m] \times [n]$, rank $r$, and prime power $q$, the $q$-hit number $\nHit_r(B, q)$ is nonnegative.
\end{conjecture}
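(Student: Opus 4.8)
The plan is to prove the inequality $\nHit_r(B,q)\ge 0$ by exhibiting, for each board $B$ and rank $r$, a genuine set whose cardinality equals $(q-1)^m\,\nHit_r(B,q)$ — that is, to answer Question~\ref{open question:interpretation} in a form strong enough to force nonnegativity. The natural target is suggested by Corollary~\ref{cor:hit partition}: since $(q-1)^m\sum_i \nHit_i(B,q)=v_m$ is the number of full-rank $m\times n$ matrices over $\F_q$, one wants a statistic $\mathrm{stat}_B$ defined on full-rank matrices (decorated by a factor of size $(q-1)^m$, such as a choice of nonzero scalar at each of the $m$ pivots) whose distribution partitions these $v_m(q-1)^m$ objects according to the hit numbers. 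Before attacking the hard intermediate ranks, I would record that the two boundary cases are already nonnegative: Proposition~\ref{prop:hitasMs} gives $\nHit_m(B,q)=M_m(B,q)\ge 0$, while Corollary~\ref{cor:H_0 as inclusion-exclusion} gives $\nHit_0(B,q)=q^{|B|}M_m(\overline B,q)\ge 0$. Moreover, by the reciprocity of Theorem~\ref{thm:ourhitreciprocity}, nonnegativity of $\nHit_i(B,q)$ is equivalent to that of $\nHit_{m-i}(\overline B,q)$, so it suffices to treat $i\le \lfloor m/2\rfloor$.

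The first concrete strategy is an induction by deletion-contraction anchored at the NE case, where the conjecture holds: for Ferrers boards $S_\lambda$, Proposition~\ref{prop:q-hit and gr} identifies $\nHit_i(S_\lambda,q)$ with $q^{\binom n2}H^{\GR}_i(S_\lambda,q)$, which lies in $\mathbb{N}[q]$ by Theorem~\ref{thm:qposHit}, and the general NE case reduces to this. One would then try to propagate nonnegativity to arbitrary boards, and in particular to complements of NE boards (the case relevant to Conjecture~\ref{rothe conjecture}), using the recurrences of Corollary~\ref{cor:delcon-q-hit-poly}. The immediate difficulty is that the identity
\[
{P}(B,q,t)={P}(B\setminus\square,q,t)+q^{m+|B|-|B/\square|-2}(t-1)\cdot{P}(B/\square,q,q^{-1}t)
\]
carries a factor $(t-1)$, so extracting the coefficient of $t^i$ writes $\nHit_i(B,q)$ as a difference of hit numbers of $B/\square$; the complement recurrence similarly carries a factor $(t-q^n)$. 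Neither is sign-coherent, so a naive induction does not preserve nonnegativity, and making it work would require a finer invariant — for instance an interlacing or monotonicity relation among consecutive hit numbers — that is stable under both deletion and contraction.

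A second route exploits the probabilistic interpretation of Theorem~\ref{thm:probinterpret}, where the numbers $F_k(B,q)$ are genuine probabilities and hence nonnegative. Rearranging that theorem and reading off coefficients of $t^{m-i}$ after expanding $(tq^{-n};q)_{m+1}$ by the $q$-binomial theorem yields
\[
\sum_{i=0}^m \nHit_i(B,q)\,q^{ni}\,t^{m-i}=\frac{q^{|B|+mn}}{(q-1)^m}\,(tq^{-n};q)_{m+1}\cdot{F}_\infty(B,q,t),
\]
so each $\nHit_i(B,q)$ is, up to a positive prefactor, a fixed linear combination of the nonnegative quantities $F_{m-i-j}(B,q)$ with coefficients $(-1)^j\qbin{m+1}{j}{q}q^{\binom j2-nj}$. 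The obstacle is that these coefficients alternate in sign, so one must prove that the positive terms always dominate. I would attempt this by seeking a manifestly nonnegative regrouping of the combination — for example, in the square case $m=n$ the rank cannot exceed the number of columns, so adding a free row can only help to reach full rank, and the partial differences of the $F_k$ acquire their own probabilistic (hence nonnegative) meaning; one would try to leverage such structure to cancel the alternating signs.

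The main obstacle is common to all three approaches: every available formula for $\nHit_i(B,q)$ (Proposition~\ref{prop:hitasMs}, Corollary~\ref{cor:H_0 as inclusion-exclusion}, and the two interpretations above) presents it as an alternating sum, and the cancellation must be controlled uniformly over all boards. I expect the cleanest resolution to come from the combinatorial model of the first paragraph, via a sign-reversing involution or replacement procedure in the spirit of Haglund's argument for Ferrers boards (sketched in Section~\ref{sec:hit-combinatorial-interpretation}), in which the signed weight of each intermediate matrix cancels except on a distinguished nonnegative set. Extending Haglund's elimination and replacement rules past the NE case is precisely the step I expect to be hardest, and is the reason the conjecture remains open: any proof will likely hinge on finding a board-independent combinatorial rule governing the replacement of pivot-free rows.
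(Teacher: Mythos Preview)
The statement you are attempting is \emph{Conjecture}~\ref{weak positivity conjecture}; the paper does not prove it and explicitly leaves it open (it is offered as a ``weak conjecture'' that would be a precondition for a positive answer to Question~\ref{open question:interpretation}). So there is no proof in the paper to compare against.

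Your proposal is not a proof either, and you say as much in your final paragraph. What you have written is an accurate survey of the natural lines of attack together with the obstructions to each: the boundary cases $\nHit_m(B,q)=M_m(B,q)\ge 0$ and $\nHit_0(B,q)=q^{|B|}M_m(\overline B,q)\ge 0$ are indeed immediate from Proposition~\ref{prop:hitasMs} and Corollary~\ref{cor:H_0 as inclusion-exclusion}; the reduction to $i\le\lfloor m/2\rfloor$ via Theorem~\ref{thm:ourhitreciprocity} is correct; the Ferrers/NE case is settled by Proposition~\ref{prop:q-hit and gr} and Theorem~\ref{thm:qposHit}; and your identification of the sign problems in both the deletion--contraction recurrence of Corollary~\ref{cor:delcon-q-hit-poly} and the expansion coming from Theorem~\ref{thm:probinterpret} is on target. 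One small correction: the NE property alone does not directly give positivity beyond Ferrers boards, since Proposition~\ref{prop:q-hit and gr} is stated only for Ferrers boards $S_\lambda$; an NE board need not be a rearrangement of a Ferrers board, so even your ``anchored'' base case would require additional argument.

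The genuine gap, then, is simply that none of the three strategies is carried through: in each case you correctly isolate an alternating sum and then stop at the point where cancellation must be controlled. That is exactly where the paper stops as well, so your proposal is a fair reflection of the state of the problem rather than a resolution of it.
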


%\subsection{An additional property of $q$-hit numbers}
%\label{sec:another hit fact}
%
%Here we give one last interesting property of $q$-hit numbers, akin to those in Section~\ref{sec:basic properties}.  Given a board $B$, the expected number of rooks in an $m$-rook placement on $[m] \times [n]$ that lie in $B$ is, by definition, $\frac{(n - m)!}{n!}\sum_i h_{i}(B) \cdot i$.  On the other hand, by linearity of expectation this number may easily be seen to be equal to $\frac{|B|}{n}$.  Thus
%\[
%\sum_{i = 0}^m h_i(B) \cdot in = |B| \cdot \frac{n!}{(n - m)!} = |B| \cdot n (n - 1) \cdots (n - m + 1).
%\]
%We have a corresponding identity for $q$-hit numbers.
%\begin{proposition}
%For any board $B \subseteq [m] \times [n]$,
%\[
%(q - 1)^m \sum_{i = 0}^m \nHit_i(B, q) \cdot q^{in} = q^{|B|} \cdot v_m = q^{|B|} (q^n - 1) \cdots(q^n - q^{m - 1}).
%\]
%\end{proposition}
%\begin{proof}
%Apply the reciprocity relation for $q$-hit numbers (Theorem~\ref{thm:ourhitreciprocity}) to Corollary~\ref{cor:hit partition} and replace $\overline{B}$ with $B$.
%\end{proof}

\subsection{Polynomiality and positivity of $q$-hit numbers} 

Just as is the case for the $q$-rook number $M_r(B,q)$, the 
$q$-hit number $\nHit_i(B,q)$ need not be a polynomial in $q$. In
fact, by Proposition~\ref{prop:hitasMs}, for a fixed board $B$,
we have that all $M_r(B,q)$ are polynomial in $q$ if and only if
all $\nHit_i(B,q)$ are polynomial. 

\begin{example} 
\label{ex:nonpolyhit}
In \cite{Stembridge}, Stembridge found a set $F \subseteq[7]\times [7]$
such that $M_7(F,q)$ is not a polynomial in $q$. This set $F$ is the
the incidence matrix of the Fano plane (see Figure~\ref{fig:Fano}). Stembridge
found that 
\begin{multline*}
M_7(F,x+1) = \nHit_7(F,x+1)= \left( x+1 \right) ^{3} (
x^{11} + 17x^{10} + 135x^{9} + 650x^{8} + 2043x^{7} + 4236x^{6} + {} \\
5845x^{5} + 5386x^{4} + 3260x^{3} + 1236x^{2} + 264x + 24 - Z_{2}x^{6})
\end{multline*}
where $x\defeq q-1$ and $Z_2$ is zero or one depending on whether $q$ is even or
odd.
\end{example}
\begin{figure}
\includegraphics{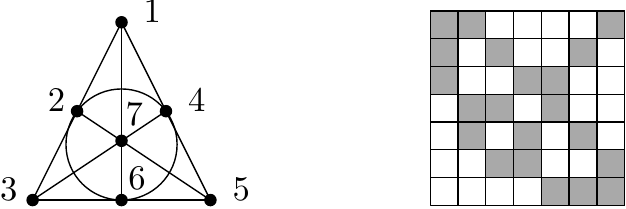}
\caption{The Fano plane and the associated board, which contains cell $(i, j)$ if and only if point $i$ lies on line $j$.}
\label{fig:Fano}
\end{figure}

Even when the $M_r(B,q)$ are polynomials with nonnegative coefficents, the
polynomial $\nHit_i(B,q)$ might have negative coefficients; see 
Example~\ref{ex:negpolyhit}(iii).  Thus it is natural to ask about
positivity if, e.g., we restrict to permutation
diagrams $I_w$ (where everything is polynomial by Corollary~\ref{cor:polynomiality-rothe}).

\begin{question}
For which permutations $w$ do the $q$-hit numbers $\nHit_i(\overline{I_w}, q)$ have positive coefficients?  
\end{question}
If $w \in \Sym_n$ avoids the permutation pattern $3412$ then
$\overline{I_w}$ is a rearrangement of a Ferrers board \cite[Prop.~2.2.7]{Manivel}.  
By Proposition~\ref{prop:q-hit and gr} and Remark~\ref{rmk:rearrangement}, for such
permutations $w$, the $q$-hit numbers $\nHit_i(\overline{I_w},q)$ equal
the Garsia--Remmel $q$-hit number $H^{\GR}_i(S_{\lambda},q)$ of the
associated Ferrers board, and so by
Theorem~\ref{thm:qposHit}, $\nHit_i(\overline{I_w}, q)$ is a polynomial
with positive coefficients.  For $n \leq 9$, these are the only such
permutations; it could be interesting to prove that this is true for
all $n$.

\begin{example}
For $w=3412$ we have that
\[
\nHit_0=\nHit_1=0, \quad
\nHit_2=q^{11}(q+1), \quad
\nHit_3=q^7(2q^4+4q^3+3q^2-1), \quad
\nHit_4=q^6(q^4+3q^3+5q^2+4q+1).
\]
\end{example}

Since $\nHit_r(\overline{I_w}, q)$ is a polynomial in $q$, one may also make a strengthened version of Conjecture~\ref{weak positivity conjecture} in this case.
\begin{question}
Is it true for every permutation $w$ and every rank $r$ that the polynomial $\nHit_r(\overline{I_w}, x + 1)$ has positive coefficients in the variable $x$?
\end{question}
The answer is affirmative for $n \leq 8$. The corresponding question for the matrix counts $M_r(\overline{I_w}, q)$ is posed in \cite[Rmk.~3.4]{KleinLM}.

\subsection{Positivity for $123$-avoiding permutations}
In \cite{LLMPSZ}, the motivating example was the board $B = ([n]
\times [n]) \setminus \{(1, 1), \ldots, (n, n)\}$; an elegant alternating formula
was given for the matrix count $M_n(B,q)$ (see Example~\ref{ex:qderangements}).  
This formula is not
positive.  Let $v \in \Sym_{2n}$ be the permutation $v \defeq (2n - 1)(2n)(2n -
3)(2n - 2) \cdots 563412$.  The diagram $I_v$ consists of $n$ boxes on
the diagonal, and by Corollary~\ref{cor:fullrankrect} one can write
down a similar alternating sum for the associated matrix count:
\[
M_{2n}(\overline{I_v},q) = q^{2n(n - 1)} \sum_{i = 0}^n (-1)^i \binom{n}{i} [2n - i]!_q.
\]
It is easy to check on a computer that for $n\leq 40$ these polynomials have nonnegative coefficients.

\begin{conjecture} \label{conj:poscompdiag}
For $v = (2n - 1)(2n)(2n - 3)(2n - 2) \cdots 3412$ 
we have that $M_{2n}(\overline{I_v},q)$ is in $\mathbb{N}[q]$.
\end{conjecture}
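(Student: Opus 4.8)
The conjecture is equivalent to the coefficientwise positivity of
\[
D_n(q)\defeq \sum_{i=0}^n(-1)^i\binom{n}{i}[2n-i]!_q,
\]
because $I_v$ consists of $n$ non-attacking cells, so $\mat_i(I_v,q)=\binom{n}{i}(q-1)^i$, and Corollary~\ref{cor:fullrankrect} gives exactly $M_{2n}(\overline{I_v},q)=q^{2n(n-1)}D_n(q)$ (the displayed formula preceding the conjecture); the monomial $q^{2n(n-1)}$ does not affect membership in $\mathbb{N}[q]$. The plan is to prove $D_n(q)\in\mathbb{N}[q]$.

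First I would embed $D_n$ in the two-parameter family
\[
D_{a,b}(q)\defeq \sum_{i=0}^b(-1)^i\binom{b}{i}[a-i]!_q\qquad(a\ge b\ge 0),
\]
so that $D_n=D_{2n,n}$ and, writing $\nabla_a g(a)\defeq g(a)-g(a-1)$ for the backward difference, $D_{a,b}=\nabla_a^{\,b}[a]!_q$. The Pascal identity yields the recurrence $D_{a,b}=D_{a,b-1}-D_{a-1,b-1}$ with $D_{a,0}=[a]!_q$, which is precisely the full-rank deletion–contraction relation of Corollary~\ref{cor:full rank} (equivalently Corollary~\ref{cor:delconw}) specialized to a board of non-attacking cells. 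In these terms the conjecture asserts that the diagonal entries $D_{2b,b}$ lie in $\mathbb{N}[q]$, and one is led to attempt the cleaner statement $D_{a,b}\in\mathbb{N}[q]$ for all $a\ge 2b$.

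The hard part will be that the recurrence carries a genuine minus sign and that positivity is self-referential across levels: taking one further backward difference gives $D_{a,b}-D_{a-1,b}=D_{a,b+1}$, so the coefficientwise monotonicity in $a$ needed to control the subtraction at level $b$ is exactly the positivity one is trying to establish at level $b+1$. Hence no induction propagating only ``positivity plus monotonicity'' can close; one would have to control backward differences of all orders at once, and this fails naively because already the higher differences of the $q$-integer $[a]_q=1+q+\cdots+q^{a-1}$ change sign (for instance $\nabla_a^2[a]_q=q^{a-2}(q-1)$). The positivity must therefore be extracted from a manifestly nonnegative representation rather than from the recurrence by itself.

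I see two routes to supply such a representation, and I expect the real work to lie in executing either one. The first is to expand $[a]!_q$ in a basis $\{B_j(a,q)\}$ with coefficients in $\mathbb{N}[q]$ whose entire backward-difference table is coefficientwise nonnegative throughout the range $a\ge 2b$; since
\[
\frac{D_{a,b}}{[a-b]!_q}=\sum_{k=0}^b(-1)^{b-k}\binom{b}{k}\prod_{j=1}^k[a-b+j]_q
\]
exhibits the reduced summand as an alternating binomial transform of rising $q$-factorials, this is the natural setting for $q$-Stirling-type positivity, with the hypothesis $a\ge 2b$ presumably forcing the relevant coefficients into their positive regime. The second, and more satisfying, route is to build a combinatorial model: realize $\prod_{j=1}^k[a-b+j]_q$ as the generating function of explicit objects (inversion sequences or sub-diagonal lattice paths) and construct a sign-reversing involution implementing the inclusion–exclusion, producing a statistic with $D_n(q)=\sum_\sigma q^{\mathrm{stat}(\sigma)}$; this would simultaneously answer Question~\ref{open question:interpretation} and Conjecture~\ref{weak positivity conjecture} in this case. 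I would stress that the usual shortcuts are genuinely unavailable: by Corollary~\ref{cor:H_0 as inclusion-exclusion} the quantity equals $q^{-n}\nHit_0(I_v,q)$ for the permutation diagram $I_v$, but $I_v$ is a set of $n$ non-attacking cells which for $n\ge 2$ does not rearrange to a Ferrers board, and $\overline{I_v}$ provably lacks the NE property, so neither Proposition~\ref{prop:q-hit and gr} nor the Garsia–Remmel positivity of Theorem~\ref{thm:qposHit} applies. Manufacturing the positive model or basis expansion is thus the heart of the matter.
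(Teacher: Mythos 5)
There is a genuine gap: you have not proved the statement. The reduction you give — that $I_v$ is a set of $n$ non-attacking (diagonal) cells, so $\mat_i(I_v,q)=\binom{n}{i}(q-1)^i$, and Corollary~\ref{cor:fullrankrect} yields $M_{2n}(\overline{I_v},q)=q^{2n(n-1)}\sum_{i=0}^n(-1)^i\binom{n}{i}[2n-i]!_q$ — is correct, but it is exactly the displayed formula the paper already records immediately before the conjecture. The entire content of the conjecture is the coefficientwise nonnegativity of the alternating sum $D_n(q)$, and on that point your write-up offers only a diagnosis of why naive induction on the deletion--contraction recurrence $D_{a,b}=D_{a,b-1}-D_{a-1,b-1}$ cannot close (a reasonable observation, since higher backward differences of $[a]_q$ change sign), together with two candidate strategies — a positive basis expansion of $[a]!_q$ with controlled difference table, or a sign-reversing involution on a combinatorial model — neither of which you execute. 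As written, the ``hard part'' you identify is precisely the part that is missing.

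For calibration: this statement is a \emph{conjecture} in the paper, supported only by machine verification for $n\leq 40$; the authors give no proof, so there is no argument of theirs to match. Your reduction and your negative observations (e.g.\ that the Garsia--Remmel positivity of Theorem~\ref{thm:qposHit} is unavailable because $\overline{I_v}$ is not a rearranged Ferrers board) are sound framing, but to claim a proof you would need to actually construct the nonnegative representation of $D_n(q)$ — and in doing so you would be resolving an open problem, not reproving a known result.
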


The diagonal board above is an example of a skew Ferrers board. Any
skew Ferrers board can be obtained (with our conventions for $I_w$) 
from the diagram of a $123$-avoiding permutation \cite{BJS}. 
This family, to which $v$ belongs, contains $\frac{1}{n+1}\binom{2n}{n}$
permutations in $\Sym_n$.  Calculations for $n\leq 14$ suggest that for every such
permutation, $M_n(w,q)$ has nonnegative coefficients. This suggests
the following strengthening of Conjecture~\ref{conj:poscompdiag}.

\begin{conjecture}
For every $123$-avoiding permutation $w$ in $\Sym_{n}$ we have that
$M_n(\overline{I_w},q)$ is in $\mathbb{N}[q]$.
\end{conjecture}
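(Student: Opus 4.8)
The plan is to separate polynomiality, which is already available, from positivity, which is the real content. Since $w$ avoids the pattern $123$, the diagram $I_w$ is, after a suitable reorientation, a skew Ferrers board \cite{BJS}; in particular it has the NE property, so Corollary~\ref{cor:polyNEshapes} already gives $M_n(\overline{I_w},q)\in\mathbb{Z}[q]$. Thus the entire problem is to show that the coefficients are nonnegative. A natural first reduction disposes of the boards that are already understood: if $w$ additionally avoids $3412$ then $\overline{I_w}$ is a rearrangement of a Ferrers board \cite{Manivel}, and positivity follows from Proposition~\ref{prop:q-hit and gr}, Remark~\ref{rmk:rearrangement}, and the Garsia--Remmel positivity of Theorem~\ref{thm:qposHit}. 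The task is therefore to push past this Ferrers-rearrangeable region into genuinely skew complements.

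My main line of attack is induction via the full-rank deletion--contraction recurrence (Corollary~\ref{cor:full rank}),
\[
q\cdot M_n(\overline{I_w},q) = M_n(\overline{I_{w\cdot(i,j)}},q) - q^{n-1}\cdot M_{n-1}(\overline{I_v},q),
\]
where $\square=(i,w_j)$ is a SW corner of $I_w$ and $v$ is as in Definition~\ref{def:contr}. The first technical step is to verify that the $123$-avoiding class is closed under both operations: deleting a SW-corner box from a skew shape should again yield a skew shape, and the contraction $I_w/\square=I_v$ removes a full corner row and column and so should again be skew, whence $w\cdot(i,j)\in\Sym_n$ and $v\in\Sym_{n-1}$ remain $123$-avoiding. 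One then inducts on the pair $(n,|I_w|)$, the base case being $I_w=\varnothing$, where $w=n\,(n-1)\cdots 1$, the board $\overline{I_w}$ is the full square, and $M_n(\overline{I_w},q)=q^{\binom{n}{2}}[n]!_q\in\mathbb{N}[q]$ manifestly.

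The hard part is the minus sign. Knowing only that the two summands on the right lie in $\mathbb{N}[q]$ is useless, since their difference need not; it is precisely such cancellations that fail for general permutation diagrams, as the $\Sym_{10}$ counterexample earlier in the paper shows. To control this I would strengthen the inductive hypothesis to a coefficientwise domination statement, proving simultaneously, via the all-rank recurrence of Corollary~\ref{cor:delconw}, that $q^{n-1}M_{n-1}(\overline{I_v},q)$ is dominated monomial-by-monomial by a designated positive part of $M_n(\overline{I_{w\cdot(i,j)}},q)$. The crux is to produce the combinatorial object realizing this domination. Because $\overline{I_w}$ is typically \emph{not} NE and \emph{not} a rearrangement of a Ferrers board (for instance the complement of the diagonal is neither), no existing positivity mechanism applies off the shelf; the right model should instead exploit the two-decreasing-subsequence structure of a $123$-avoiding $w$, running Gaussian elimination on matrices vanishing on $I_w$ in an order adapted to the skew region, or equivalently encoding the invertibility constraint by families of non-intersecting lattice paths in the Lindstr\"om--Gessel--Viennot spirit. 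Finding that model, and matching its generating polynomial to $M_n(\overline{I_w},q)$, is where I expect essentially all of the difficulty to lie.
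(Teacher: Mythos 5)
There is a genuine gap, and you should be aware that the paper itself does not prove this statement: it is stated as Conjecture~\ref{conj:poscompdiag}'s strengthening at the very end of the paper, supported only by computer verification for $n\leq 14$ (and for $n\leq 40$ in the special diagonal case). So there is no proof in the paper to compare against, and your proposal does not supply one either. What you establish is only what the paper already establishes: membership in $\mathbb{Z}[q]$ via the NE property of skew shapes and Corollary~\ref{cor:polyNEshapes}, plus positivity in the degenerate subcase where $w$ also avoids $3412$ and $\overline{I_w}$ rearranges to a Ferrers board. The entire content of the conjecture is the nonnegativity of the coefficients outside that subcase, and your argument stops exactly where that content begins.

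Concretely, the inductive scheme via Corollary~\ref{cor:full rank} cannot work as written because of the subtraction: knowing $M_n(\overline{I_{w\cdot(i,j)}},q)\in\mathbb{N}[q]$ and $M_{n-1}(\overline{I_v},q)\in\mathbb{N}[q]$ gives no control on their difference, and the paper's own counterexamples in $\Sym_9$ and $\Sym_{10}$ (which arise from permutations \emph{not} avoiding $123$) show that this recurrence genuinely produces negative coefficients for nearby boards, so any successful induction must use the $123$-avoidance in an essential way. Your proposed fix --- a coefficientwise domination of $q^{n-1}M_{n-1}(\overline{I_v},q)$ by a ``designated positive part'' of $M_n(\overline{I_{w\cdot(i,j)}},q)$ --- is precisely the missing theorem, and you neither define the designated part nor prove the domination; you acknowledge as much in your last sentence. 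Two secondary points also remain unverified: that the class of $123$-avoiding permutations is closed under the deletion $w\mapsto w\cdot(i,j)$ and the contraction $w\mapsto v$ at a SW corner (plausible, but asserted rather than proved), and that the invoked Lindstr\"om--Gessel--Viennot or elimination-order model exists at all. As it stands the proposal is a reasonable research plan for attacking an open problem, not a proof.
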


\bibliographystyle{alpha}
\bibliography{Remmel}

\end{document}